\DeclareMathOperator{\Pic}{Pic}
\DeclareMathOperator{\Proj}{Proj}
\DeclareMathOperator{\Spec}{Spec}
\DeclareMathOperator{\Supp}{Supp}
\DeclareMathOperator{\vol}{vol}
 \numberwithin{equation}{subsection}
 \numberwithin{footnote}{subsection}
 \newtheorem{cor}[subsection]{Corollary}
 \newtheorem{lem}[subsection]{Lemma}
 \newtheorem{prop}[subsection]{Proposition}
 \newtheorem{thm}[subsection]{Theorem}
\theoremstyle{upright}
 \newtheorem{exa}[subsection]{Example}
 \newcommand{\N}{\mathbb N}
 \newcommand{\PP}{\mathbb P}
 \newcommand{\A}{\mathbb A}
 \newcommand{\Q}{\mathbb Q}
 \newcommand{\R}{\mathbb R}
 \newcommand{\Z}{\mathbb Z}
 \newcommand{\bir}{\dashrightarrow}
 \newcommand{\rddown}[1]{\left\lfloor{#1}\right\rfloor} % round-down
\title{\large S\MakeLowercase{ingularities on} F\MakeLowercase{ano fibrations and beyond}}
\thanks{2020 MSC:
14J17, % singularities
14J45, % Fano varieties
14J32,  % Calabi-Yau manifolds
14E30, %Minimal model program (Mori theory, extremal rays)
14C20. % Linear systems, divisors, etc
}
\author{\large C\MakeLowercase{aucher} B\MakeLowercase{irkar}}
\date{\today}
\begin{document}
\maketitle

\begin{abstract}
In this paper, we investigate singularities on fibrations and related topics. We prove conjectures of M$^{\rm c}$Kernan and Shokurov on singularities on Fano type fibrations and a conjecture of the author on singularities on log Calabi-Yau fibrations. From these we derive a variant of a conjecture of M$^{\rm c}$Kernan and Prokhorov on rationally connected varieties  with nef anti-canonical divisor. 

We present further applications to other problems including boundedness of klt complements for Fano fibrations over curves, torsion index of rationally connected Calabi-Yau pairs, and gonality of fibres of del Pezzo fibrations. 

We prove a general result on controlling multiplicities of fibres of certain fibrations (not necessarily of Fano type) which is the key ingredient of the proofs of the above results.   
\end{abstract}

\tableofcontents

%%%%%%%%%%%%%%%%%%%%%%%%
%%%%%%%%%%%%%%%%%%%%%%%%%%

\section{\bf Introduction}

We work over an algebraically closed field $k$ of characteristic zero unless stated otherwise.\\

We prove various statements regarding behaviour of singularities on fibrations $f\colon X\to Z$ where $X,Z$ are algebraic varieties and $f$ is a surjective projective morphism. This involves singularities of $X$, of the fibres of $f$, of the base $Z$, and other structures associated with the fibration. Understanding singularities on fibrations is a natural and important aspect of algebraic geometry, particularly birational geometry, as it frequently appears in inductive understanding of algebraic varieties. It is also important in other areas of mathematics and beyond, e.g. in arithmetic geometry (e.g. [\ref{Silverman}, Chapter IV, Section 8]) and mathematical physics (e.g. [\ref{BLee-1}]).

In this paper, we treat singularities in general settings (e.g. \ref{t-bnd-mult-lc-places-fib-main}) but we also pay particular attention to the case when the fibration $f$ is a Fano fibration or a log Calabi-Yau fibration. Assume for now that $f\colon X\to Z$ is a \emph{Fano fibration}, i.e. $X$ has log canonical (lc) singularities, $-K_X$ is ample over $Z$, and $f$ is a contraction. There are three main cases to consider: 
\begin{enumerate}
\item $\dim Z=0$: global case,
\item $0<\dim Z<\dim X$: fibre space case,
\item $\dim Z=\dim X$: birational case.
\end{enumerate}
There have been many fundamental problems associated with these different cases.
For years, a key focus of our work has been a program dedicated to resolving these problems.
Some of the main problems in case (1) were settled in [\ref{B-Fano}][\ref{B-BAB}]. The aim of this paper is to settle some of the main problems in case (2). The final phase of this program is case (3) with multiple open problems that are being pursued elsewhere. 

We will then focus on case (2).
Assume again that $f\colon X\to Z$ is a {Fano fibration}, and for simplicity of this discussion, assume $X,Z$ are $\Q$-factorial. Iskovskikh conjectured that if $f$ is a conic bundle (i.e. $\dim X-\dim Z=1$) and if $X$ is a 3-fold with terminal singularities, then $Z$ has canonical singularities. This conjecture was proved by Mori and Prokhorov [\ref{MP2}]. Moreover, Shokurov conjectured that if $f$ is a conic bundle and if $X$ has canonical singularities (with $X$ of arbitrary dimension), then $Z$ has $\frac{1}{2}$-lc
singularities which was proved by Han, Jiang and Luo [\ref{HCY}]. Both the canonical singularities
in Iskovskikh conjecture and the $\frac{1}{2}$-lc singularities in Shokurov conjecture are optimal.

M$^{\rm c}$Kernan proposed a generalisation of Iskovskikh conjecture where he conjectured that given $d\in \N$ and $\epsilon\in \R^{>0}$, there is $\delta\in \R^{>0}$ such that if $f\colon X\to Z$ is a Fano fibration where $X$ is $\epsilon$-lc of dimension $d$, then $Z$ is $\delta$-lc. Note that the class of $\epsilon$-lc singularities is much wider than that of terminal and canonical singularities that were traditionally used (see \ref{ss-pairs}). Alexeev and Borisov [\ref{AB}] proved the toric case of this conjecture, that is, when $X,Z$ are toric varieties and $f$ is a toric morphism. 

On the other hand,  Shokurov independently proposed an even more general conjecture regarding singularities on Fano type fibrations. Roughly speaking it says that given $d\in \N$ and $\epsilon\in \R^{>0}$, there is $\delta\in \R^{>0}$ such that if $f\colon (X,B)\to Z$ is a fibration where $(X,B)$ is $\epsilon$-lc of dimension $d$, $K_X+B\equiv 0/Z$, and $-K_X$ is big over $Z$, then the discriminant b-divisor defined by the canonical bundle formula has coefficients $\le 1-\delta$. As we will see, the discriminant b-divisor measures singularities on the fibration.

Shokurov conjecture implies M$^{\rm c}$Kernan conjecture and it contains more information about singularities of fibrations. A consequence of Shokurov conjecture is that if $f\colon X\to Z$ is a Fano fibration  
where $X$ is $\epsilon$-lc of dimension $d$, then multiplicities of the fibres of $f$ over codimension one points of $Z$ are bounded depending only on $d,\epsilon$. When $f$ is a del Pezzo fibration from a 3-fold $X$ with terminal singularities onto a curve $Z$, this boundedness of multiplicities of fibres was proved by Mori and Prokhorov [\ref{MP}].  The toric case was proved by Birkar and Chen [\ref{BC-toric}].

Shokurov conjecture was proved in Birkar [\ref{B-sing-fano-fib}] assuming $(F,\Supp B|_F)$ belongs to a fixed bounded family where $F$ is a general fibre of $f$. In particular, we can deduce that Shokurov conjecture holds when the coefficients of $B$ are bounded away from zero because in this case, by the BAB [\ref{B-BAB}, Theorem 1.1], $(F,\Supp B|_F)$ belongs to a bounded family. This is very useful in many situations, but to utilise the full power of the conjecture we need to work with arbitrary boundary coefficients. But then the difficulties we face are of an entirely different magnitude and we have to use a completely different approach to solve the problem. The issue here is not simply a combinatorial difficulty with the coefficients but instead it is a geometric problem and solving it requires a deeper understanding of fibrations and their singularities. 

One of the main results of this paper is a proof of Shokurov conjecture (see \ref{t-mc-sh-conj}). As mentioned above, this implies M$^{\rm c}$Kernan conjecture (see \ref{cor-mckernan-conj}) and boundedness of multiplicities of fibres over codimension one points (see \ref{cor-mult-fib}). It also implies another conjecture of Shokurov on boundedness of klt complements for Fano fibrations over curves (see \ref{cor-klt-comp}). 

Going beyond the Fano case, in [\ref{B-lcyf}, Conjecture 2.6], we proposed a conjecture concerning singularities on log Calabi-Yau fibrations that do not necessarily have a Fano type structure. We prove this conjecture (see \ref{t-cb-sing-usual-fib}) which is stronger than Shokurov conjecture and it will be important for future work on families of Calabi-Yau pairs. 

On the other hand, M$^{\rm c}$Kernan and Prokhorov [\ref{Mc-Pr}] conjectured that if $X$ is a projective rationally connected variety of dimension $d$ with $\epsilon$-lc singularities ($\epsilon>0)$ and if $-K_X$ is nef, then $X$ is bounded. This conjecture has been studied in recent years where boundedness has been replaced by boundedness up to isomorphism in codimension one. In this sense, it was proved in dimension 3 in Birkar-Di Cerbo-Svaldi [\ref{BDS}] (also see [\ref{CDCH+}]). We prove the conjecture in every dimension (see \ref{t-mc-pro-conj}) where again we use boundedness up to isomorphism in codimension one. A consequence of this is a proof of the index conjecture of Shokurov for rationally connected Calabi-Yau pairs (see \ref{t-cy-index-conj}).

The main tools that we will use in this paper are the minimal model program, the theory of complements, the theory of generalised pairs, and toroidal and toric geometries. We also make crucial use of some of the ideas of the proof of BAB and results in [\ref{B-BAB}][\ref{B-Fano}].

Let us also emphasize that, with a view towards the future, the importance of this work is not only in the results proved here but equally in the techniques and ideas introduced that make other fundamental problems accessible, e.g. Shokurov conjecture on boundedness of klt complements, ACC for mlds. Also their impact will likely go beyond birational and algebraic geometry, e.g. some of the toric results proved here have consequences for the geometry of integers [\ref{B-pos-sing-bnd}], and applications are expected in F-theory in mathematical physics in the context of elliptic Calabi-Yau varieties [\ref{BLee-1}][\ref{BLee-2}] and in K\"ahler geometry in the context of K\"ahler-Ricci shrinkers [\ref{SZ}]. 

In the rest of this introduction we will state our results more precisely and sometimes in more general settings than those mentioned above. \\

{\textbf{\sffamily{Singularities on Fano type fibrations.}}}
We start with Shokurov conjecture on singularities. 
Let $f\colon X\to Z$ be a contraction of normal varieties and
$(X,B)$ be a klt pair such that $K_X+B\sim_\R 0/Z$. By a construction of Kawamata [\ref{ka97}][\ref{ka98}] (also see Ambro [\ref{am99}]) we may write a canonical bundle formula (also called adjunction)
$$
K_X+B\sim_\R f^*(K_Z+B_Z+M_Z)
$$
where $B_Z$ is called the \emph{discriminant divisor} and 
$M_Z$ is called the \emph{moduli divisor}. The discriminant part is canonically determined as a Weil $\R$-divisor by the singularities of $(X,B)$ and the fibres over codimension one points of $Z$; the moduli part is then 
automatically determined as an $\R$-linear equivalence class. The divisor $B_Z$ measures the singularities on the fibration and the divisor $M_Z$ measures the variation of the log fibres in their ``moduli space". 
For each birational contraction $Z'\to Z$, one can similarly define $B_{Z'},M_{Z'}$ so that their pushdown to $Z$ coincide with $B_Z,M_Z$. One can then view $(Z,B_Z+M_Z)$ as a so-called generalised pair.  See \ref{s-adjunction} for more details.

\begin{thm}
	\label{t-mc-sh-conj}
	Let $d\in \mathbb{N}$ and let $\epsilon \in \mathbb{R}^{>0}$. Then there is $\delta \in \mathbb{R}^{>0}$ depending only on $d,\epsilon$ satisfying the following. 
	Let $(X,B)$ be a pair where $B$ is a $\Q$-boundary and let \(f\colon X\to Z\) be a contraction 
	such that 
\begin{itemize}
\item $(X,B)$ is $\epsilon$-lc and $\dim X-\dim Z=d$,
\item $K_X+B\sim_\Q 0/Z$, and 
\item $-K_X$ is big over $Z$.
\end{itemize}	
Then the generalised pair $(Z,B_Z+M_Z)$ induced by the canonical bundle formula 
$$
K_X+B\sim_\Q f^*(K_Z+B_Z+M_Z)
$$ 
is generalised $\delta$-lc.
\end{thm}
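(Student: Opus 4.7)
The plan is to translate the target inequality on discriminant coefficients into a uniform upper bound on multiplicities of fibres over codimension-one points. Fix a birational model $Z'\to Z$ and a prime $D\subset Z'$; we need to bound the coefficient of $D$ in $B_{Z'}$. Taking a sufficiently high log resolution $\pi\colon W\to X$ of $(X,B+f^*D)$ (localized over $\eta_D$), writing $K_W+B_W=\pi^*(K_X+B)$, and letting $\pi^*(f^*D)=\sum_i m_i\Gamma_i$ with $\Gamma_i$ running over the prime components that dominate $D$, the log canonical threshold defining the discriminant is
\[
t_D=\min_i\frac{1-\mult_{\Gamma_i}B_W}{m_i},
\]
and the coefficient of $D$ in $B_{Z'}$ equals $1-t_D$. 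Since $(X,B)$ is $\epsilon$-lc, $\mult_{\Gamma_i}B_W\le 1-\epsilon$ for every $\Gamma_i$, so $t_D\ge \epsilon/\max_i m_i$. It therefore suffices to produce a universal bound $\max_i m_i\le M=M(d,\epsilon)$; then $\delta=\epsilon/M$ works.

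To produce such an $M$, I would invoke the general multiplicity theorem \ref{t-bnd-mult-lc-places-fib-main} that the paper advertises as its main technical engine. The Fano-type hypothesis enters as follows. Since $-K_X$ is big over $Z$ and $(X,B)$ is $\epsilon$-lc with $K_X+B\sim_\Q 0/Z$, the theory of complements (in the form developed for BAB) should produce a universal $n=n(d)$ and an effective $\Q$-divisor $\Theta\ge B$ such that $(X,\Theta)$ is lc and $n(K_X+\Theta)\sim 0/Z$. The fibration $(X,\Theta)\to Z$ is now log Calabi-Yau of bounded Cartier index, with $\Theta$-coefficients in the finite set $\{0,1/n,2/n,\dots,1\}$. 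This brings it within the scope of the general multiplicity theorem and yields the desired bound $M$ on the $m_i$, which are defined purely by the Cartier pullback $f^*D$ and hence do not depend on the chosen boundary.

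The main obstacle is the general multiplicity theorem itself, whose proof (carried out elsewhere in the paper) synthesizes toroidal and toric geometry (to model $f$ near $\eta_D$ by a toric contraction), de Jong's theory of families of nodal curves (to control the degenerations of the generic fibre along $D$), and an MMP-with-scaling argument informed by complement theory in the style of the BAB proof. A subtle point in the deduction above is to verify that the two hypotheses actually compose: the lct estimate $t_D\ge \epsilon/\max_i m_i$ uses the $\epsilon$-lc pair $(X,B)$, whereas the multiplicity bound is extracted from the lc pair $(X,\Theta)$, and one must check that the multiplicity theorem's output genuinely depends only on $d$ and $\epsilon$ when $\Theta$ is enlarged beyond $B$ by the complements construction. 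Previous work of the author reduces this conjecture under the extra assumption that $(F,\Supp B|_F)$ lies in a bounded family (which fails here for small coefficients), so the essential novelty is precisely this decoupling of the lct bound from any boundedness of the generic fibre pair.
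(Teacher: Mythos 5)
Your high-level plan — translate the discriminant bound into a multiplicity bound, furnish an lc log Calabi--Yau structure via complements, and apply Theorem~\ref{t-bnd-mult-lc-places-fib-main} — is indeed the paper's plan, and your final paragraph correctly identifies where the difficulty sits. But the complement step as you state it does not work. You want a bounded $n=n(d)$ and $\Theta\ge B$ with $(X,\Theta)$ lc and $n(K_X+\Theta)\sim 0/Z$, citing the BAB-style complement machinery; however [\ref{B-Fano}, Theorem 1.8] requires the boundary coefficients to lie in a fixed hyperstandard set $\Phi(\mathfrak{R})$, and for $B$ with arbitrary coefficients in $[0,1-\epsilon]$ no such $n$ exists — this is exactly the failure of boundedness of $(F,\Supp B|_F)$ that you flag. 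The paper does not build a complement $\ge B$. It first fixes a prime divisor $T$ over $X$ computing the $\tfrac{\epsilon}{2}$-lc threshold of $f^*z$, reduces to $\epsilon=1/p$ so that $1-\epsilon$ is a standard coefficient, extracts $T$, runs an MMP on $-(K_X+(1-\epsilon)T)/Z$, and builds the $n$-complement $\Lambda$ for the boundary $(1-\epsilon)T$ alone, with $n=n(d)$ independent of $\epsilon$. Because $n$ does not depend on $\epsilon$, taking $\epsilon$ small forces $\mu_T\Lambda=1$, i.e.\ $a(T,X,\Lambda)=0$. This coupling of the complement to the threshold divisor $T$ is precisely what puts $T$ inside the hypotheses of Theorem~\ref{t-bnd-mult-lc-places-fib-main}. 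Your $\Theta$ is tied to $B$ rather than to $T$, and there is no reason for the divisor computing the threshold to be an lc place of $\Theta$, so the multiplicity theorem has no $T$ to apply to.

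Two further points. The bound you need is not ``$\max_i m_i\le M$'' over \emph{all} components of some log resolution of $(X,B+f^*D)$ — that maximum is unbounded as you pass to higher models — but a bound on the single multiplicity $\mu_Tf^*D$ for the divisor $T$ nearly computing the threshold; that is what Theorem~\ref{t-bnd-mult-lc-places-fib-main} supplies. And even once $T$ and $\Lambda$ are in hand, applying that theorem is not a plug-in: the paper reduces to $\dim Z=1$, runs an MMP so $-K_X$ is ample over $Z$, uses BAB to embed $X$ into a relatively bounded family $(Y,D)/Z$ via $-nK_X$, applies bounded toroidalization (Theorem~\ref{t-bnd-torification}) to obtain $(Y',D')$, and replaces $B$ with $\frac{1}{r'}B+(1-\frac{1}{r'})\Lambda$ so the coefficient of $T'$ in the pulled-back boundary is nonnegative. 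These steps produce the very ample $A'$ with bounded relative degree, the integral $r\Lambda$ containing the exceptional locus, and the condition $a(T,W,B_W)\le 1$ that the multiplicity theorem requires, and without them the bound does not transfer back to $\mu_Tf^*z$.
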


The theorem is equivalent to showing that for any birational contraction $Z'\to Z$, the coefficients of the discriminant divisor $B_{Z'}$ do not exceed $1-\delta$. 

The theorem as stated is quite general and enough for many applications. We will, however, prove a more general version of the theorem for $\R$-boundaries, for generalised pairs, and when $X,Z$ are over a base variety (see \ref{t-sh-sing-gen-fib}). Using the results of this paper, an even more general version of the theorem, also conjectured by Shokurov, is proved in [\ref{BC-vertical-elc}] where the $\epsilon$-lc assumption on $(X,B)$ is replaced by weaker conditions such as being vertical $\epsilon$-lc meaning that the log discrepancy $a(D,X,B)\ge \epsilon$ for divisors $D$ over $X$ which are vertical over $Z$.

The theorem was previously known when $(F,\Supp B_F)$ belongs to a fixed  bounded family where $F$ is a general fibre of $f$ and $B_F=B|_F$ [\ref{B-sing-fano-fib}], e.g. when the horizontal coefficients of $B$ are $\ge t$ for some fixed $t>0$, by [\ref{B-BAB}][\ref{B-sing-fano-fib}]. The method used in [\ref{B-sing-fano-fib}] is completely different from the one developed in this paper. The theorem was also known when $d\le 1$ [\ref{B-sing-fano-fib}]. Also, a global variant of the theorem is established in [\ref{B-FT-fib}] where one assumes that $X,Z$ are projective and that $A_Z-(K_Z+B_Z+M_Z)$ is ample for some very ample divisor $A_Z$  with bounded $A_Z^{\dim Z}$. 

In the general case, a well-known argument reduces the theorem to the case when $Z$ is a curve (cf. [\ref{B-sing-fano-fib}]). Thus it is reasonable to assume $\dim X-\dim Z=d$ rather than $\dim X=d$ (although Shokurov conjecture was commonly stated assuming the latter). See [\ref{HCY}][\ref{BC-vertical-elc}] for other variations.

An important open problem relating to the theorem is to give an explicit description of $\delta$ in terms of $d,\epsilon$. This is known for $d=1$ [\ref{BChen-d=1}].

We are ready to state some immediate consequences of the theorem for Fano fibrations.
Recall that in general by a Fano fibration \(f\colon X\to Z\) we mean $f$ is a contraction between normal varieties, $X$ has lc singularities, and $-K_X$ is ample over $Z$.

\begin{cor}
	\label{cor-mckernan-conj}
	Let $d\in \mathbb{N}$ and let $\epsilon \in \mathbb{R}^{>0}$. Then there is $\delta \in \mathbb{R}^{>0}$ depending only on $d,\epsilon$ satisfying the following. 
	Let \(f\colon X\to Z\) be a Fano fibration where $X$ is $\epsilon$-lc, $\dim X-\dim Z=d$, and $K_Z$ is $\Q$-Cartier. Then $Z$ is $\delta$-lc.
\end{cor}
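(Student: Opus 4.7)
The plan is to derive the corollary directly from Theorem~\ref{t-mc-sh-conj}. The obstacle is that the theorem requires a $\Q$-boundary $B$ with $K_X+B\sim_\Q 0/Z$, whereas the corollary only hands us the ample divisor $-K_X$ over $Z$. My first step is therefore to manufacture such a $B$: since $-K_X$ is ample over $Z$, for $m$ large I pick a sufficiently general effective divisor $H\sim -mK_X$ over $Z$ and set $B:=\tfrac{1}{m}H$, so that $K_X+B\sim_\Q 0/Z$ and $-K_X$ remains big over $Z$. By a Bertini-type argument combined with the smallness of the coefficient $1/m$ and the $\epsilon$-lc assumption on $X$, one arranges $(X,B)$ to be $\epsilon/2$-lc for $m$ large depending only on $d$ and $\epsilon$. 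If uniform control of the singularities of a general $H$ becomes awkward in the singular case, I would instead interpolate $\tfrac{1}{m}H$ with a bounded klt complement supplied by boundedness of complements for Fano type fibrations; a convex combination still satisfies $K_X+B\sim_\Q 0/Z$ and retains $\epsilon'$-lc singularities depending only on $d,\epsilon$.

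Applying Theorem~\ref{t-mc-sh-conj} to $f\colon (X,B)\to Z$ with parameters $d$ and $\epsilon/2$ then produces $\delta>0$, depending only on $d$ and $\epsilon$, such that the generalised pair $(Z,B_Z+M_Z)$ arising from the canonical bundle formula is generalised $\delta$-lc. Following the remark immediately after that theorem, Ambro's result on the moduli divisor yields an effective $N_Z\sim_\Q M_Z$ for which $(Z,B_Z+N_Z)$ is a usual $\delta'$-lc pair with $\delta'>0$ depending only on $d,\epsilon$. Since $B_Z+N_Z\ge 0$ and $K_Z$ is $\Q$-Cartier by assumption, monotonicity of log discrepancies gives
$$a(E;Z)\ \ge\ a(E;Z,B_Z+N_Z)\ \ge\ -1+\delta'$$
for every divisor $E$ over $Z$, so $Z$ is $\delta'$-lc, which is the required conclusion.

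The substantive content is entirely in Theorem~\ref{t-mc-sh-conj}; the only point in this reduction that could be delicate is the uniform construction of the auxiliary boundary $B$ with $\epsilon/2$-lc singularities, i.e.\ a lower bound on the log canonical threshold of a general section of $|-mK_X|$ over $Z$ depending only on $d$ and $\epsilon$. That is where I expect the main obstacle to lie, and where either a direct Bertini-type argument or the interpolation trick above should suffice.
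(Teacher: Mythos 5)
Your proof is correct and takes essentially the same approach as the paper: take a general $H\sim -mK_X$ over $Z$ and set $B=\tfrac{1}{m}H$ (Bertini applied to the relatively base-point-free system $|-mK_X|$ makes $(X,B)$ $\min(\epsilon,1-\tfrac1m)$-lc automatically, so your worry about uniform dependence of $m$ on $d,\epsilon$ is unnecessary --- $m$ may depend on $X$), apply Theorem~\ref{t-mc-sh-conj}, and conclude from the $\Q$-Cartier hypothesis on $K_Z$. Your invocation of Ambro's result to replace $M_Z$ by an effective $N_Z$ is a valid way to make the final monotonicity step precise; the paper's terse final sentence can alternatively be justified directly by the negativity lemma applied to $\pi^*(B_Z+M_Z)-M_{Z'}$, which is anti-nef over $Z$ with effective pushforward $B_Z$.
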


This is proved by taking a boundary $B$ so that $(X,B)$ is $\epsilon$-lc and $K_X+B\sim_\Q 0/Z$, and then applying Theorem \ref{t-mc-sh-conj}. Note that if $X$ is $\Q$-factorial and if $X\to Z$ is a Mori fibre space, then $Z$ is automatically $\Q$-factorial.

\begin{cor}
	\label{cor-mult-fib}
	Let $d\in \mathbb{N}$ and let $\epsilon \in \mathbb{R}^{>0}$. Then there is $l\in \mathbb{N}$ depending only on $d,\epsilon$ satisfying the following. 
	Let \(f\colon X\to Z\) be a Fano fibration where $X$ is $\epsilon$-lc of dimension $d$ and $Z$ is a  curve.
	Then for each $z\in Z$, each coefficient of $f^*z$ is $\le l$.
\end{cor}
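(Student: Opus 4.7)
The plan is to deduce the corollary directly from Theorem \ref{t-mc-sh-conj} by producing a suitable boundary on $X$ and reading off fibre multiplicities from the discriminant.

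The case $d=1$ is trivial, as a contraction between curves is an isomorphism, so assume $d\ge 2$. Since $-K_X$ is ample over $Z$ and $X$ is $\epsilon$-lc, a standard argument produces an effective $\Q$-divisor $B\sim_\Q -K_X/Z$ with $(X,B)$ still $\epsilon/2$-lc: take $m$ large and divisible so that $-mK_X$ is relatively very ample, choose a general $H\in|-mK_X/Z|$, and set $B=H/m$; for a general $H$ in a base-point-free relative linear system, the contribution to any log discrepancy is at most $O(1/m)$, which is less than $\epsilon/2$ once $m$ is large enough in terms of $\epsilon$. By construction $K_X+B\sim_\Q 0/Z$. Applying Theorem \ref{t-mc-sh-conj} to $f\colon(X,B)\to Z$, with relative dimension $d-1$ and singularity parameter $\epsilon/2$, yields $\delta>0$ depending only on $d$ and $\epsilon$ such that every coefficient of the discriminant divisor $B_Z=\sum_{z\in Z}b_z\cdot z$ satisfies $b_z\le 1-\delta$.

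It remains to translate this bound into a bound on fibre multiplicities. Fix a closed point $z\in Z$ and write $f^*z=\sum_i m_i F_i$, where the $F_i$ are the prime divisors of $X$ mapping to $z$. By the definition of the discriminant divisor,
$$
1-b_z \;=\; \inf_E \frac{a(E;X,B)}{\mult_E f^*z},
$$
where the infimum runs over divisorial valuations $E$ on birational models of $X$ whose centre on $Z$ equals $z$. Specialising to $E=F_i$ and using $a(F_i;X,B)=1-\mult_{F_i}B\le 1$ together with $1-b_z\ge\delta$ gives
$$
\delta \;\le\; \frac{1-\mult_{F_i}B}{m_i} \;\le\; \frac{1}{m_i},
$$
so $m_i\le 1/\delta$, and $l=\lceil 1/\delta\rceil$ suffices.

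The argument is essentially a formal consequence of Theorem \ref{t-mc-sh-conj}; the only substantive step is the construction of $B$ with singularities controlled in terms of $\epsilon$ alone, and this is handled by a standard Bertini-type argument on a base-point-free linear system. I do not anticipate any serious obstacle here, as the entire content of the corollary is packaged inside the main theorem.
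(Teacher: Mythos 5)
Your proof is correct and takes essentially the same route as the paper's, which simply produces a klt boundary $B$ with $K_X+B\sim_\Q 0/Z$, applies Theorem \ref{t-mc-sh-conj} to bound the lc threshold of $f^*z$ from below, and reads off the multiplicity bound; you have merely spelled out the Bertini-type construction of $B$ and the specialisation of the lc-threshold formula to $E=F_i$, both of which the paper leaves implicit.
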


Theorem \ref{t-mc-sh-conj} is actually proved by reducing it to a variant of this corollary. This allows us to relate \ref{t-mc-sh-conj} to a general problem about boundedness of multiplicities of fibres of certain fibrations which are not necessarily of Fano type (see \ref{t-bnd-mult-lc-places-fib-main}).

\bigskip

{\textbf{\sffamily{Klt complements on Fano fibrations over curves.}}}
Another consequence of \ref{t-mc-sh-conj} and \ref{cor-mult-fib} regards existence of bounded klt complements. This is a case of a conjecture of Shokurov.

\begin{cor}
	\label{cor-klt-comp}
	Let $d\in \mathbb{N}$ and let $\epsilon \in \mathbb{R}^{>0}$. Then there is $n\in \mathbb{N}$ depending only on $d,\epsilon$ satisfying the following. Let \(f\colon X\to Z\) be a Fano fibration where $X$ is $\epsilon$-lc of dimension $d$ and $Z$ is a curve. Then for each $z\in Z$, there exists a klt $n$-complement of $K_X$ over $z$.
\end{cor}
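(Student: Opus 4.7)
The plan is to descend the complement problem from $X$ to the curve $Z$ via the canonical bundle formula, construct a bounded klt $n$-complement of the induced generalised pair on the germ $(Z,z)$, and pull it back to $X$. The essential inputs are Theorem \ref{t-mc-sh-conj} (bounded discriminant on $Z$) and Corollary \ref{cor-mult-fib} (bounded fibre multiplicities over $z$).

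First, since $-K_X$ is big over $Z$, I pick $\Gamma\ge 0$ with $K_X+\Gamma\sim_\Q 0/Z$ and $(X,\Gamma)$ being $\tfrac{\epsilon}{2}$-lc, and form the canonical bundle formula $K_X+\Gamma\sim_\Q f^{\ast}(K_Z+B_Z+M_Z)$. Theorem \ref{t-mc-sh-conj} yields $\delta=\delta(d,\epsilon)>0$ with $b:=\mult_z B_Z\le 1-\delta$, and Corollary \ref{cor-mult-fib} yields $l=l(d,\epsilon)$ with $a_i:=\mult_{\Gamma_i}f^{\ast}z\le l$ for every component $\Gamma_i$ of the fibre over $z$.

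Next, choose $n\in\N$, depending only on $d,\epsilon$, with $1/n\le\delta$ and $n$ large enough to clear the denominator of a representative of $M_Z$ of bounded Cartier index near $z$; such a bounded index is available because $Z$ is a curve and the general fibre of $f$ is BAB-bounded, so the moduli part of the cbf has bounded index. Set $b^+:=\lceil nb\rceil/n$ and $B_Z^+:=b^+ z$, so $b\le b^+\le 1-1/n<1$, and choose $M_Z^+\sim_\Q M_Z$ with $n(K_Z+B_Z^+ +M_Z^+)\sim 0$ near $z$. This is a generalised klt $n$-complement of $(Z,B_Z+M_Z)$ at $z$. Pulling back, put $B:=\Gamma+(b^+-b)f^{\ast}z\ge 0$; then $K_X+B\sim_\Q f^{\ast}(K_Z+B_Z^++M_Z^+)\sim_\Q 0/z$ and $n(K_X+B)\sim 0/z$.

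For klt, note that for any prime divisor $E$ lying over $z$, the lct defining $b$ gives $a(E,X,\Gamma)\ge(1-b)\mult_E f^{\ast}z$, so
$$
a(E,X,B)\;=\;a(E,X,\Gamma)-(b^+-b)\mult_E f^{\ast}z\;\ge\;(1-b^+)\mult_E f^{\ast}z\;\ge\;\mult_E f^{\ast}z/n\;>\;0,
$$
while for horizontal $E$, $a(E,X,B)=a(E,X,\Gamma)>0$. Hence $(X,B)$ is a klt $n$-complement over $z$. The main obstacle is precisely this klt verification, where both inputs cooperate: without Corollary \ref{cor-mult-fib}, rounding $b\mapsto b^+$ can push coefficients on fibre components past $1$; without Theorem \ref{t-mc-sh-conj}, such a rounding cannot be done with $n$ uniform in the data. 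A secondary technical point, the bounded Cartier index of $M_Z$ near $z$, is handled by BAB together with standard results on the moduli part of the cbf.
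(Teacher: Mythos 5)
Your reduction via the canonical bundle formula does not actually produce an $n$-complement for any bounded $n$. Writing $B=\Gamma+(b^+-b)f^{\ast}z$ with $\Gamma$ an arbitrary effective $\Q$-divisor satisfying $K_X+\Gamma\sim_\Q 0/Z$ and $(X,\Gamma)$ $\tfrac{\epsilon}{2}$-lc leaves you with no control whatsoever on the denominators of $\Gamma$, nor on the index $m$ for which $m(K_X+\Gamma)\sim mf^{\ast}N$. Consequently $nB$ need not be integral and $n(K_X+B)\sim 0$ near $z$ simply does not follow, no matter how $n$ is chosen in terms of $d,\epsilon$. The point you flag as the secondary issue, a bounded Cartier index for $M_Z$, is in fact a non-issue here: $Z$ is a smooth curve, so after shrinking around $z$ every $\Q$-divisor is $\Q$-linearly trivial and one can take $M_Z^+=0$ locally. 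The real index problem sits upstairs on $X$, in the choice of $\Gamma$, and your argument never addresses it. This is also why your appeal to Corollary \ref{cor-mult-fib} does no work: your klt computation only needs $1-b^+>0$, and the claim that rounding would push coefficients past $1$ without the multiplicity bound is already handled by $a(E,X,\Gamma)\ge (1-b)\mu_E f^{\ast}z$.

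The paper closes exactly this gap by not trying to build the complement by hand. It uses Theorem \ref{t-mc-sh-conj} together with Corollary \ref{cor-mult-fib} to find a bounded $r$ with $(X,\tfrac{1}{r}f^{\ast}z)$ klt and with $\tfrac{1}{r}f^{\ast}z$ having coefficients in a fixed finite set, and then invokes the boundedness of (lc) complements, Theorem \ref{t-bnd-compl-usual-local}, to produce a strong $n$-complement $K_X+\Lambda$ of $K_X+\tfrac{1}{r}f^{\ast}z$ over $z$ with $\Lambda\ge \tfrac{1}{r}f^{\ast}z$; it is precisely this theorem that supplies the bounded index $n$, which you cannot manufacture from the cbf alone. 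Finally $K_X+\Lambda-\tfrac{1}{r}f^{\ast}z$ is an $rn$-complement, and subtracting $\tfrac{1}{r}f^{\ast}z$ turns the lc pair $(X,\Lambda)$ into a klt one over $z$ because every lc place over $z$ meets $f^{\ast}z$ with multiplicity at least $1$. To repair your proof you would need to replace the ad hoc $\Gamma$ by a boundary of bounded index, which is exactly the content of the complement theorem you did not use.
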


Shokurov conjectured that the same is true even if $\dim Z>1$. If we replace klt with lc, then the conjecture was proved in  [\ref{B-Fano}]. 
For the definition of complements, see \ref{ss-compl}. Boundedness of complements is about understanding the asymptotic behaviour of the relative linear systems $|-mK_X|_{/Z}$ for $m\ge 0$. 

Using Theorem \ref{t-mc-sh-conj}, Chen [\ref{BChen-complements}] has recently shown that Shokurov conjecture on boundedness of klt complements can be reduced to the case when $X\to Z$ is birational. As a consequence he proves the conjecture when $\dim Z=2$. 

\bigskip

{\textbf{\sffamily{Fibres of del Pezzo fibrations.}}}
Another application of Theorem \ref{t-mc-sh-conj} is to special fibres of del Pezzo fibrations. 
This answers [\ref{BCDP}, Question 4.6] in the case of gonality in a more general setting (the question considers terminal singularities only). 

\begin{thm}\label{t-gonality}
For each $\epsilon\in \R^{>0}$, there is $g\in \N$ satisfying the following. Let $f\colon X\to Z$ be a Fano fibration where $X$ is a 3-fold with $\epsilon$-lc singularities and $Z$ is a curve. Let $S$ be an irreducible component of a fibre of $f$ over a closed point. Then $S$ is birational to $\PP^1\times C$ for a smooth projective curve $C$ with gonality $\le g$.
\end{thm}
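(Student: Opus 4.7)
The plan is to combine Corollary \ref{cor-mult-fib} and Corollary \ref{cor-klt-comp} with adjunction and reduce the statement to boundedness of log Calabi--Yau surfaces: first bound the multiplicity of $S$ in $f^*z$, then derive uniruledness of $S^\nu$ from adjunction, and finally bound the gonality of the MRC base by placing $S$ in a bounded family.

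For the first two steps, Corollary \ref{cor-mult-fib} (applied with $d=3$) gives a bound $l=l(\epsilon)$ on the multiplicity $\mu$ of $S$ in $f^*z$. Write $f^*z=\mu S+R$ with $R\ge 0$ not containing $S$. Since $-K_X$ is $f$-ample and every curve in $S$ is $f$-contracted, $-K_X|_S$ is ample on $S$. From $f^*z\sim 0$ near $z$ we obtain $S|_S\sim_\Q -\tfrac{1}{\mu}R|_S$, hence
\[
-(K_X+S)|_S \sim_\Q -K_X|_S+\tfrac{1}{\mu}R|_S
\]
is big, being the sum of an ample and an effective divisor. Non-normal adjunction $K_{S^\nu}+\Theta=\nu^*(K_X+S)|_S$ with $\Theta\ge 0$ the different then shows $-K_{S^\nu}$ is big, so $S^\nu$ is uniruled. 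Therefore $S$ is birational to $\PP^1\times C$ for a unique smooth projective curve $C$, the base of the MRC fibration of $S^\nu$.

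To bound the gonality of $C$, I would use Corollary \ref{cor-klt-comp} to produce a klt $n$-complement $K_X+\Gamma$ over $z$ with $n=n(\epsilon)$, so that $K_X+\Gamma\sim_\Q 0/z$ and $n(K_X+\Gamma)\sim 0/z$. Adjusting $\Gamma$ by a suitable multiple of $f^*z$, and if necessary passing to a $\Q$-factorial dlt modification of $(X,S)$, I would obtain an lc pair $(X,\Gamma')$ with $K_X+\Gamma'\sim_\Q 0/z$ in which $S$ (or its strict transform) is an lc place. Adjunction then gives an lc log Calabi--Yau pair $(S^\nu,\Theta')$ whose coefficients lie in a finite set determined by $n$ and $l$, because $\Gamma$ has index dividing $n$ and $\mu\le l$. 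Known boundedness results for lc log Calabi--Yau surface pairs with a fixed finite coefficient set then place $(S^\nu,\Theta')$ in a bounded family, bounding the birational class of $S$ and confining $C$ to a bounded family of smooth projective curves; in particular, the gonality of $C$ is bounded by some $g=g(\epsilon)$.

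The main obstacle I anticipate is arranging the lc adjunction in the last step: since $(X,S)$ need not be lc, one cannot simply add $S$ to $\Gamma$. The idea is to exploit the relative triviality $f^*z\sim 0/z$ to redistribute coefficients among the components of $f^*z$ while preserving $\Q$-triviality over $z$, and to extract $S$ on a dlt model of $(X,S)$, in the spirit of the techniques underlying Theorem \ref{t-mc-sh-conj}. Once the adjunction has been set up so that the coefficients of $\Theta'$ lie in a fixed finite set, the passage from boundedness of log Calabi--Yau surface pairs to a uniform gonality bound for $C$ is routine.
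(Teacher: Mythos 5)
Your first two steps are sound (modulo passing to a $\Q$-factorialisation so that $S$ and $K_X+S$ are $\Q$-Cartier), but the third step has a genuine gap, and your overall route diverges substantially from the paper's. The paper's proof is essentially one line: by Theorem~\ref{t-mc-sh-conj}, there is a fixed $t=t(\epsilon)>0$ with $(X,tf^*z)$ klt; one then chooses $B\ge tf^*z$ with $(X,B)$ klt and $K_X+B\sim_\Q 0/Z$ and directly invokes [\ref{BL}, Theorem~1.3], which is a dedicated result bounding the gonality of the base of the MRC fibration of vertical divisors on $3$-fold Fano fibrations under precisely this klt hypothesis. In other words, the paper outsources all the hard surface-geometric analysis to [\ref{BL}]; your proposal attempts to reconstruct that analysis from scratch via complements and log Calabi--Yau adjunction, and this is where it breaks.

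The decisive gap is in the final paragraph. The assertion that ``known boundedness results for lc log Calabi--Yau surface pairs with a fixed finite coefficient set then place $(S^\nu,\Theta')$ in a bounded family'' is false. For a counterexample take $S^\nu=\mathbb{F}_n$ with $E$ the negative section, $E'\sim E+nF$ a disjoint positive section, and $F$ a fibre; then $(\mathbb{F}_n, E+E'+2F)$ is an lc log Calabi--Yau surface pair whose boundary coefficients lie in the finite set $\{1\}$, yet the $\mathbb{F}_n$ are not bounded. Alexeev's boundedness result for log surfaces (\cite{Alexeev-K2}) requires an $\epsilon$-lc hypothesis and does not apply to lc pairs. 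Of course these $\mathbb{F}_n$ are rational, so they would not actually cause the gonality bound to fail --- but this is exactly the point: the route through boundedness of the log CY surface pair cannot work, and the correct argument (carried out in [\ref{BL}]) has to argue directly on the MRC base. A secondary but also real obstacle is the one you flagged yourself: producing an lc pair $(X,\Gamma')$ with $K_X+\Gamma'\sim_\Q 0/z$ in which $S$ has coefficient exactly $1$ is not a routine redistribution, because adding a multiple of $f^*z$ to raise the coefficient of $S$ also raises the coefficients of every other vertical component of $f^*z$, and there is no a priori bound keeping those below $1$. To be useful, your appeals to \ref{cor-mult-fib} and \ref{cor-klt-comp} would need to be fed into something with the force of [\ref{BL}, Theorem~1.3]; replacing that input with a generic boundedness statement for lc log CY surfaces does not work.
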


The main point is that, putting $z=f(S)$, by Theorem \ref{t-mc-sh-conj}, $(X,tf^*z)$ is klt for some fixed $t>0$. This allows us to apply the results of [\ref{BL}]. Actually it is possible to bound the genus of $C$ and treat similar statements in higher dimension; see [\ref{BQ}] for more on this direction.

\bigskip

{\textbf{\sffamily{Rationally connected varieties with nef anti-canonical divisor.}}}
Another main application of Theorem \ref{t-mc-sh-conj} is the boundedness of certain rationally connected varieties which was conjectured by M$^{\rm c}$Kernan and Prokhorov [\ref{Mc-Pr}] (the two dimensional case is due to Alexeev [\ref{Alexeev-K2}]). 

\begin{thm}
	\label{t-mc-pro-conj}
	Let $d\in \mathbb{N}$ and let $\epsilon \in \mathbb{R}^{>0}$. Consider projective varieties $X$  
    where 
	\begin{itemize}
	\item $(X,B)$ is $\epsilon$-lc of dimension $d$ for some $B$,
	\item $-(K_X+B)$ is nef, and 
	\item $X$ is rationally connected.
	\end{itemize}
	Then such $X$ form a bounded family up to isomorphism in codimension one.
\end{thm}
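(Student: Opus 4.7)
The plan is to argue by induction on the dimension $d$; the base case $d=1$ is trivial since then $X\simeq\PP^1$. For the inductive step, I would first pass to a small $\Q$-factorialization, which changes nothing in codimension one. Since $X$ is rationally connected, $K_X$ is not pseudo-effective, so one may run a $K_X$-MMP; using that $-(K_X+B)$ is nef, the MMP steps can be analyzed as partial crepant operations on the pair $(X,B)$ so that the $\epsilon$-lc condition on a companion log pair is preserved. The MMP terminates with a Mori fibre space $f'\colon X'\to Z$ with $X'$ still $\Q$-factorial $\epsilon$-lc (for an appropriate boundary) and $-(K_{X'}+B')$ still nef, where $B'$ is the strict transform of $B$.

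Next I would split on $\dim Z$. If $\dim Z=0$, then $X'$ is an $\epsilon$-lc Fano of dimension $d$, bounded by BAB. If $\dim Z>0$, the canonical bundle formula gives $K_{X'}+B'\sim_{\Q}(f')^*(K_Z+B_Z+M_Z)$, and the projection formula implies that $-(K_Z+B_Z+M_Z)$ is nef in the generalized-pair sense. Since $X'$ is rationally connected, so is $Z$. By Theorem \ref{t-mc-sh-conj}, the generalized pair $(Z,B_Z+M_Z)$ is generalized $\delta$-lc with $\delta$ depending only on $d,\epsilon$. To apply the inductive hypothesis to $Z$, one must prove the theorem in a form that covers generalized pairs, which means strengthening the statement and carrying the stronger statement through the induction. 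Granting this, $Z$ lies in a bounded family up to isomorphism in codimension one.

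With $Z$ bounded, I would bound $X'$ over $Z$ using that the general fibres of $f'$ are $\epsilon$-lc Fanos of dimension $d-\dim Z$ (bounded by BAB), that fibre multiplicities over codimension-one points of $Z$ are bounded by Corollary \ref{cor-mult-fib}, and that the canonical bundle formula yields a bounded polarization on the total space. Finally, $X\dashrightarrow X'$ is built from flips (automatically isomorphisms in codimension one) together with divisorial contractions; to conclude that $X$ itself lies in a bounded family up to isomorphism in codimension one, I would re-extract the contracted divisors as $\epsilon$-lc places over the bounded $X'$, invoking boundedness of such extractions over bounded $\epsilon$-lc pairs. The main obstacle is precisely this last step together with the careful preservation of singularities through the MMP: handling the divisorial contractions in a uniform way, and setting up the induction so that the generalized-pair form of the theorem matches the structure found on $Z$, are the technical hearts of the argument.
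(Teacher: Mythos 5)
Your proposal is close to the paper's argument in the case $K_X\not\equiv 0$ (the paper proves the generalised-pair variant, Theorem \ref{t-bnd-rc-gen-pairs}, and then derives \ref{t-mc-pro-conj} by setting $M:=-(K_X+B)$, so your instinct to upgrade the induction to generalised pairs is exactly what the paper does). But there is a genuine gap at the very first MMP step.

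You assert that ``Since $X$ is rationally connected, $K_X$ is not pseudo-effective,'' and use this to produce a Mori fibre space. This is true for smooth rationally connected varieties, but \emph{false} in the klt setting: there exist rationally connected projective klt varieties with $K_X\equiv 0$ (necessarily with worse-than-canonical singularities), and for these the $K_X$-MMP terminates without producing a Mori fibre space — your entire inductive machine never starts. The paper isolates this case explicitly and treats it by an unrelated argument: since $X$ is rationally connected with $K_X\equiv 0$, its singularities are worse than canonical, so one can extract a divisor $D$ with $a(D,X,0)<1$, pass to $X'\to X$ with $K_{X'}+B'=\pi^*K_X$, $B'\neq 0$, and $\kappa_\sigma(-K_{X'})=0$. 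It then bounds the coefficient of $B'$ away from zero using Proposition \ref{p-bnd-coeff-bnd-fam-kappa=0} (a semicontinuity argument over the parametrizing base comparing $N_\sigma$-type decompositions), and finally runs back down via Lemma \ref{l-bnd-fam-bnd-mmodel} to recover $X$ up to isomorphism in codimension one. None of this is subsumed by a $K_X$-MMP.

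A secondary point: after running the $K_X$-MMP in the case $K_X\not\equiv 0$, the claim that $-(K_{X'}+B')$ is \emph{nef} for the strict transform $B'$ is not justified — nefness of $-(K_X+B)$ is not preserved by a $K_X$-MMP on the nose. What is preserved is the generalised pair condition: setting $M:=-(K_X+B)$, the pair $(X,B+M)$ is generalised $\epsilon$-lc with $K_X+B+M\equiv 0$, and a $K_X$-MMP is a $(-B-M)$-MMP, which preserves this structure with the nef b-divisor $M'$ unchanged on the common resolution. You gesture at this (``companion log pair'') and at strengthening the induction to generalised pairs, so this is more a matter of making it precise than a missing idea; but the $K_X\equiv 0$ case is a hole that no amount of bookkeeping around the MMP will close, and it is where the paper introduces its genuinely new ingredient.
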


This was known up to dimension $3$ [\ref{BDS}] which crucially relies on [\ref{B-FT-fib}] and [\ref{Jiang}]. Also see [\ref{CDCH+}] for a similar result in dimension $3$. To replace ``bounded family up to isomorphism in codimension one" with ``bounded family", one has to use techniques that are perhaps not related to this paper. It is likely more related to the Morrison-Kawamata-Totaro cone conjecture. On the other hand, the theorem is stronger than the original conjecture because here the coefficients of $B$ are arbitrary in the range $[0,1-\epsilon]$ while the conjecture assumed coefficients in $[\epsilon,1-\epsilon]$. 

Theorem \ref{t-mc-pro-conj} follows from Theorem \ref{t-mc-sh-conj}, ideas from [\ref{BDS}], and some additional arguments introduced in this paper. 

A more general version of the theorem in the context of generalised pairs holds. In fact, the next theorem is proved first and \ref{t-mc-pro-conj} is derived as an immediate corollary. The use of generalised pairs is crucial for the proof of the next result, hence also crucial for the proof of \ref{t-mc-pro-conj}.  

\begin{thm}\label{t-bnd-rc-gen-pairs}
	Let $d\in \mathbb{N}$ and let $\epsilon \in \mathbb{R}^{>0}$.
Consider projective varieties $X$ such that 
\begin{itemize}
\item $(X,B+M)$ is  generalised $\epsilon$-lc of dimension $d$ for some $B,M$,
\item $K_X+B+M\sim_\R 0$, and 
\item $X$ is rationally connected.
\end{itemize}
Then the set of such $X$ forms a bounded family up to isomorphism in codimension one. 
\end{thm}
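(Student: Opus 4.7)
The plan is to proceed by induction on dimension $d$. The case $d=1$ gives $X=\PP^1$ and is trivial, so suppose $d\ge 2$. Since $X$ is rationally connected, $K_X$ is not pseudo-effective. Replacing $(X,B+M)$ by the output of a $K_X$-MMP, an operation that preserves the generalised $\epsilon$-lc condition, the $\R$-linear triviality $K_X+B+M\sim_\R 0$, rational connectedness, and the birational equivalence class in codimension one, I may assume that there is a Mori fibre space structure $f\colon X\to T$. In particular $-K_X$ is ample over $T$ and $\dim T<d$.

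The canonical bundle formula for $f$ yields $K_T+B_T+M_T\sim_\R 0$ with $(T,B_T+M_T)$ a generalised pair, and Theorem \ref{t-mc-sh-conj}, in its generalised-pair form referenced as \ref{t-sh-sing-gen-fib}, produces some $\delta=\delta(d,\epsilon)>0$ such that $(T,B_T+M_T)$ is generalised $\delta$-lc. As a dominant image of the rationally connected $X$, the base $T$ is rationally connected, so it satisfies the hypotheses of the theorem in dimension $\dim T<d$ with parameter $\delta$ in place of $\epsilon$. By the inductive hypothesis, $T$ varies in a bounded family up to isomorphism in codimension one.

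To bound $X$ once $T$ is bounded I would follow the strategy of [\ref{BDS}]: the general fibre $F$ of $f$ is a Fano variety of dimension $d-\dim T$ with $\epsilon$-lc singularities and hence is bounded by BAB [\ref{B-BAB}]; the horizontal part of $B+M$ over $T$ is controlled via the relative klt complements of Corollary \ref{cor-klt-comp} and their generalised analogue, while multiplicities of vertical components of $B$ are controlled by Corollary \ref{cor-mult-fib}. Using Ambro's result the moduli b-divisor $M_T$ can be replaced by an effective $\Q$-linearly equivalent divisor with controlled coefficients and Cartier index. Combining these ingredients with the arguments of [\ref{BDS}] and the relevant result of [\ref{CZ}] assembles the pairs $(X,B+M)\to T$ over the bounded base $T$ into a bounded family in codimension one.

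The main obstacle is precisely this final assembly step: even with fibrewise BAB control and Theorem \ref{t-mc-sh-conj} control on the base, one must rule out unbounded moduli variation of the log fibres together with the nef part $M$, and handle the fact that the coefficients of $B$ are a priori unrestricted in $[0,1-\epsilon]$ while $M$ may have arbitrarily small positive pieces. Consequently the induction must be carried out entirely within the category of generalised pairs, which is why the generalised formulation of Theorem \ref{t-mc-sh-conj} and a generalised version of complements theory are essential; a naive reduction to ordinary pairs would lose the uniform control needed to close the induction, and this is why Theorem \ref{t-bnd-rc-gen-pairs} is proved first with the ordinary-pair statement \ref{t-mc-pro-conj} obtained as an immediate corollary.
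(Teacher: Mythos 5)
The central step of your argument breaks down. You assert that rational connectedness of $X$ forces $K_X$ to be non-pseudo-effective, so that a $K_X$-MMP always terminates in a Mori fibre space. This is true when $X$ has terminal (or canonical) singularities, because then a resolution $Y\to X$ has $K_Y\ge $ pullback of $K_X$ and $K_Y$ is not pseudo-effective by BDPP; but it is \emph{false} in the klt case, which is the relevant one here. A classical example: take an abelian surface $A$ and a finite group $G$ acting with only isolated fixed points such that $A/G$ is rational (for instance $A=E\times E$ with $E=\C/\Z[i]$ and $G=\Z/4$ acting by multiplication by $i$); then $X=A/G$ is rationally connected, klt, and $K_X\sim 0$, so $K_X$ is pseudo-effective. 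Consequently your reduction covers only the case $K_X\not\equiv 0$. This is exactly the dichotomy the paper sets up: after passing to a $\Q$-factorialisation it splits into $K_X\not\equiv 0$ (treated essentially as you propose, via a $K_X$-MMP, Theorem \ref{t-sh-sing-gen-fib}, induction, and [\ref{B-FT-fib}, Theorem 2.3] for the assembly) and $K_X\equiv 0$, which your proposal leaves untouched.

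The $K_X\equiv 0$ case is where the genuinely new work lies and cannot be absorbed into the Mori-fibre-space argument. There the paper observes that rational connectedness forces the existence of a divisor $D$ over $X$ with $a(D,X,0)<1$, extracts $D$ to obtain $(X',B')$ with $B'\neq 0$ and $\kappa_\sigma(-K_{X'})=0$, bounds $X'$ up to small modification via the $K\not\equiv 0$ case, and then must bound the coefficient of $B'$ from below. That last point is Proposition \ref{p-bnd-coeff-bnd-fam-kappa=0}, a new lower bound on coefficients for pairs in a bounded family with $\kappa_\sigma(-K)=0$, followed by Lemma \ref{l-bnd-fam-bnd-mmodel} to recover $X$ as a bounded minimal model. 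None of this appears in your sketch; citing [\ref{CZ}] and the BDS-style assembly does not supply it, and indeed you yourself flag the assembly step as the main obstacle without identifying what fills the gap. Separately, your outline does not address the passage from $\Q$-coefficients to $\R$-coefficients, which the paper handles by iterating to a tower of Mori fibre spaces; that is a smaller issue than the missing $K_X\equiv 0$ case, but it also needs an argument.
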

  
Burt Totaro pointed out to us that the following result, which is a special case of the so-called index conjecture for Calabi-Yau pairs (due to Shokurov), follows from either of Theorems \ref{t-mc-pro-conj} and \ref{t-bnd-rc-gen-pairs}. 

\begin{cor}
	\label{t-cy-index-conj}
	Let $d\in \mathbb{N}$ and let $\Phi\subset [0,1]$ be a DCC set of rational numbers. Then there exists $I\in \N$ depending only on $d,\Phi$ satisfying the following. Consider pairs $(X,B)$  
    where 
	\begin{itemize}
	\item $(X,B)$ is projective klt of dimension $d$, 
	\item the coefficients of $B$ are in $\Phi$,
	\item $K_X+B\sim_\Q 0$, and 
	\item $X$ is rationally connected.
	\end{itemize}
	Then $I(K_X+B)\sim 0$.
\end{cor}

\bigskip

{\textbf{\sffamily{Singularities on log Calabi-Yau fibrations.}}}
For applications it is important to go beyond Fano type fibrations. This was treated in [\ref{B-lcyf}] under certain assumptions. The general version was proposed as a conjecture [\ref{B-lcyf}, Conjecture 2.6]. Our next result is a proof of this conjecture which also holds for $\R$-boundaries and generalised pairs (see \ref{t-cb-sing-gen-fib}). 

\begin{thm}\label{t-cb-sing-usual-fib}
	Let $d,v\in \mathbb{N}$ and let $\epsilon \in \mathbb{R}^{>0}$.
 Then there is $\delta\in \R^{>0}$ depending only on $d,v,\epsilon$ satisfying the following. 
 Assume that $(X,B)$ is a pair where $B$ is a $\Q$-boundary, and that $f\colon X\to Z$ is a contraction such that
\begin{itemize}
\item $(X,B)$ is $\epsilon$-lc of dimension $d$,

\item $K_X+B\sim_\Q 0/Z$, and  

\item there is an integral divisor $N$ on $X$ which is big over $Z$ with $\vol(N|_F)<v$ 
for the general fibres $F$ of $f$.
\end{itemize}
Then the generalised pair $(Z,B_Z+M_Z)$ induced by the canonical bundle formula, is generalised $\delta$-lc.
\end{thm}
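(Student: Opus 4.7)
The plan is to reduce to the case where $Z$ is a smooth curve, and then invoke the key technical result \ref{t-bnd-mult-lc-places-fib-main} on boundedness of multiplicities of fibres for fibrations not necessarily of Fano type. In Theorem \ref{t-mc-sh-conj} the Fano-type structure supplied horizontal control on the general fibre via BAB-style arguments; in the present setting that role is played by the integral divisor $N$ with $0<\vol(N|_F)<v$.

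First I would carry out the standard reduction to $Z$ being a curve. To bound the coefficient of a given prime divisor $P$ on some birational model $Z'\to Z$ away from $1$, pull the fibration back to $Z'$, choose a general curve $C\subset Z'$ meeting $P$ transversally at a single point (e.g. a general complete intersection of very ample divisors), and pass to the base change of $f$ by $C\to Z'$. After normalising, taking a resolution, and tracking the behaviour of the canonical bundle formula under base change, one obtains a contraction $f_C\colon X_C\to C$ with $(X_C,B_C)$ still $\epsilon$-lc, of relative dimension $d$, with $K_{X_C}+B_C\sim_\Q 0/C$, and with a pulled-back integral divisor $N_C$ whose volume on the general fibres of $f_C$ still satisfies $0<\vol(N_C|_{F})<v$ (the general fibre is unchanged by base change). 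Moreover the coefficient of $P$ in $\mathbf{B}_{Z'}$ equals the coefficient of $C\cap P$ in the discriminant associated to $f_C$, so the problem reduces to bounding discriminant coefficients at closed points on a curve.

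With $Z$ a smooth curve I would then apply Theorem \ref{t-bnd-mult-lc-places-fib-main} to $f\colon(X,B)\to Z$ together with $N$ to produce $l=l(d,v,\epsilon)\in\N$ such that $\mult_D f^*z\le l$ for every prime divisor $D\subset f^*z$ and every closed $z\in Z$. By definition, the coefficient of $z$ in $B_Z$ equals $1-t_z$ where
\[
t_z\;=\;\min_{D\subset f^*z}\frac{1-\mult_D B}{\mult_D f^*z}.
\]
Since $(X,B)$ is $\epsilon$-lc we have $1-\mult_D B\ge\epsilon$, and by the bound above $\mult_D f^*z\le l$, so $t_z\ge\epsilon/l$ and hence $\mult_z B_Z\le 1-\epsilon/l$. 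Taking $\delta=\epsilon/l$ then yields the generalised $\delta$-lc conclusion, as the moduli b-divisor $\mathbf{M}_Z$ is nef and does not affect the discriminant bound.

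The substantive geometric content lies entirely in Theorem \ref{t-bnd-mult-lc-places-fib-main}, which will be proved independently and is where the hypothesis $0<\vol(N|_F)<v$ is actually used to replace the missing Fano-type structure; this is the main obstacle. The remaining steps are reasonably routine: the reduction to $Z$ a curve requires choosing $C$ generically enough that the base-changed pair remains $\epsilon$-lc and verifying the compatibility of discriminants under base change (a standard but delicate point involving ramification of $C\to Z'$ along fibres over $C\cap P$), while preservation of the bounded-volume hypothesis is immediate.
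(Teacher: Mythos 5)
The heart of the proposal — directly applying Theorem \ref{t-bnd-mult-lc-places-fib-main} to the log Calabi--Yau fibration to bound multiplicities of all fibre components — is a genuine gap, for two reasons, and it is not the route the paper takes.

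First, Theorem \ref{t-bnd-mult-lc-places-fib-main} is not a statement about a fibration together with a polarising divisor $N$. Its hypotheses include a relatively bounded lc pair $(X,\Lambda)$ with $r\Lambda$ integral and $A-\Lambda$ pseudo-effective over $Z$, a birational contraction $W\to X$ with $(W,B_W)$ $\epsilon$-lc, and a prime divisor $T$ that is simultaneously an lc place of $(X,\Lambda)$ (so $a(T,X,\Lambda)=0$) and satisfies $a(T,W,B_W)\le 1$; only for \emph{that} $T$ does it bound $\mu_T F$. To apply it you must produce such a triple $(X,\Lambda,T)$. In the proof of \ref{t-mc-sh-conj}, this is constructed via a bounded klt $n$-complement (using [\ref{B-Fano}, Theorem 1.8]) and the boundedness of the general fibre (BAB): the Fano structure is used at precisely this point, and nothing in your sketch replaces it. The divisor $N$ never enters \ref{t-bnd-mult-lc-places-fib-main} at all. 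Second, even granting a multiplicity bound, your formula $t_z=\min_{D\subset f^*z}(1-\mu_D B)/(\mu_D f^*z)$ only computes the lc threshold when $(X,B+f^*z)$ is log smooth; in general the minimum must run over divisors over $X$, not only on $X$, and the divisors that threaten lc-ness are exactly the ones not on $X$.

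What the paper actually does (Theorem \ref{t-cb-sing-gen-fib}, of which \ref{t-cb-sing-usual-fib} is a special case) is an induction on relative dimension, not a direct multiplicity bound. After reducing to $\dim Z=1$ and normalising $N$ (so $N\ge 0$, nef and big over $Z$), one takes the pseudo-effective threshold $t$ of $K_X$ with respect to $N$, passes to the minimal model $X'$ of $K_X+tN$ over $Z$, and uses the semi-ample fibration $f'\colon X'\to Y'/Z$, which is a Fano type fibration of strictly smaller relative dimension. Applying the (generalised) Fano result \ref{t-sh-sing-gen-fib} to $f'$ gives a generalised $\tau$-lc pair on $Y'$, and the bounded-volume hypothesis on $N$ is used precisely to show that the induced polarisation $J'$ on $Y'$ has bounded relative volume, so the inductive hypothesis applies to $Y'\to Z$. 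Finally, the compatibility of discriminants under composition of adjunctions ([\ref{B-lcyf}, Lemma 6.10]) identifies ${\bf C}_Z$ with ${\bf B}_Z$. This intermediate Fano fibration and the induction are exactly the missing idea in your sketch: the role of $0<\vol(N|_F)<v$ is not to supply a toroidal/bounded model for \ref{t-bnd-mult-lc-places-fib-main}, but to produce and control the contraction $X'\to Y'$ so that the Fano case can be invoked on a fibration of lower relative dimension.
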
 

Note that in [\ref{B-lcyf}, Conjecture 2.6] it is assumed that $N\ge 0$ and that 
$$
0<\vol(B|_F+M|_F+N|_F)<v.
$$ 
In this setting, we can apply the above theorem to $-K_X+N$ in place of $N$.
On the other hand, 
the theorem is clearly stronger than Theorem \ref{t-mc-sh-conj} because in the case of Fano type fibrations we can take $N=-K_X$ and use the fact that $\vol(-K_F)$ is bounded from above, by [\ref{B-BAB}]. However, we prove \ref{t-mc-sh-conj} first and then derive \ref{t-cb-sing-usual-fib}.

A simple example where the theorem applies is when $f\colon X\to Z$ is a minimal elliptic fibration and $N$ is a rational multi-section of degree $<v$. More sophisticated examples appear in the context of variations of polarised Calabi-Yau pairs (cf. [\ref{BH}, Lemma 6.7]).

\bigskip

{\textbf{\sffamily{Multiplicities along lc places.}}}
A key ingredient of the proof of Theorem \ref{t-mc-sh-conj} is the following result on controlling multiplicities of fibres. Much of the paper is in fact devoted to the proof of this. Roughly speaking, it is saying that in fibrations that are generically relatively bounded, we can control special fibres under suitable singularity and numerical minimality conditions. Although this has no apparent connection with Fano fibrations, but interestingly, its proof is eventually reduced to understanding toric Fano fibrations.

\begin{thm}\label{t-bnd-mult-lc-places-fib-main}
	Let $d,r\in \mathbb{N}$ and let $\epsilon \in \mathbb{R}^{>0}$.
Then there is $l\in \N$ depending only on $d,r,\epsilon$ satisfying the 
following. Assume that 
\begin{itemize}

\item $(W,B_W)$ is an $\epsilon$-lc pair and $(X,\Lambda)$ is an lc pair, both of dimension $d$,

\item $W\to X$ is a birational contraction and $X\to Z$ is a surjective projective morphism onto a smooth curve, 

\item $K_W+B_W$ is nef$/X$, 

\item  $A$ is a very ample$/Z$ divisor on $X$ such that $\deg_{A/Z}A\le r$,

\item $A-B$ and $A-\Lambda$ are pseudo-effective over $Z$ where $B$ is the pushdown of $B_W$, 

\item $r\Lambda$ is integral, and 

\item $T$ is a prime divisor over $X$ mapping to a closed point $z\in Z$ with 
$$
a(T,W,B_W)\le 1 ~~~\mbox{and}~~~ a(T,X,\Lambda)=0.
$$
\end{itemize}
Then $\mu_TF\le l$ where $F$ is the fibre of $X\to Z$ over $z$. That is, if $\phi\colon V\to X$ is a 
resolution so that $T$ is a divisor on $V$, then $\mu_T\phi^*F\le l$.
\end{thm}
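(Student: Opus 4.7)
The plan is to transfer the problem to a bounded toroidal model of the fibration near $z$ where the multiplicity of $T$ in the fibre can be read off combinatorially. Begin with a common log resolution $\phi\colon V\to X$ factoring as $V\xrightarrow{\psi} W\xrightarrow{\sigma} X$ on which $T$ is a prime divisor. Combining $a(T,X,\Lambda)=0$ and $a(T,W,B_W)\le 1$ with the $\epsilon$-lc hypothesis on $(W,B_W)$, $T$ is an lc place of $(X,\Lambda)$ while its log discrepancy relative to $(W,B_W)$ lies in the fixed interval $[\epsilon,1]$. So $T$ is tightly constrained by both pairs simultaneously.

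The hypotheses on $A$ next put the general fibre of $f$ in a log bounded family: very ampleness of $A/Z$ with $\deg_{A/Z}A\le r$ gives a bounded-degree embedding of the general fibre $F_{\mathrm{gen}}$ into a projective space of bounded dimension, and pseudo-effectivity of $A-\Lambda$ over $Z$ combined with $r\Lambda$ being integral places $(F_{\mathrm{gen}},\Supp\Lambda|_{F_{\mathrm{gen}}})$ in a log bounded family. Using this horizontal boundedness together with de Jong's theory of families of nodal curves and toroidalisation of morphisms, I would---after a finite base change $Z'\to Z$ of degree bounded in terms of $d,r,\epsilon$---produce a birational modification $(X',\Sigma')\to X\times_Z Z'$ which is toroidal over $(Z',z')$ and whose central fibre $F'$ is a reduced snc divisor supported in $\Sigma'$. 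Further toroidal blowups if needed ensure the pulled-back valuation $T'$ of $T$ is toroidal.

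On this toroidal model $T'$ corresponds to a primitive lattice vector $v_T$ in a cone of the associated fan $\Delta$, and $\mu_{T'}F'$ is the pairing of $v_T$ with the primitive linear form on $\Delta$ cutting out $z'$. The discrepancy and $\epsilon$-lc constraints recorded above, pulled back to $X'$, confine $v_T$ to a bounded polytope inside $\Delta$; the $\epsilon$-lc bound on $(W,B_W)$ is crucial here, as it prevents $v_T$ from wandering arbitrarily far into the cone. This yields a combinatorial bound on $\mu_{T'}F'$, and the bounded degree of $Z'\to Z$ converts it into the required bound on $\mu_T F$.

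The main obstacle is the toroidalisation step: existing toroidalisation theorems are qualitative, so extracting a bound depending only on $d,r,\epsilon$ requires combining the horizontal boundedness supplied by BAB with the theory of bounded complements and induction on $d$. Equally delicate is the propagation of the $\epsilon$-lc hypothesis on $W$ through the base change and toroidal modifications without losing effective control of $v_T$.
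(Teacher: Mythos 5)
Your reduction to a bounded toroidal model over a bounded-degree alteration of $Z$ is the right opening move and matches the paper's Theorem \ref{t-bnd-torification}, and the observation that $A$ together with $\deg_{A/Z}A\le r$ bounds the fibre is also on target. But the rest of your argument has two genuine gaps.

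First, you assert that after toroidalisation the central fibre $F'$ becomes a \emph{reduced} snc divisor. A toroidal morphism over a smooth curve does not have reduced fibres: the multiplicities of the components of the central fibre are exactly the lattice pairings in the fan, and they can be arbitrarily large (indeed, bounding them is precisely the content of the theorem). Forcing the fibre to be reduced would require semi-stable reduction, which discards the very multiplicity data the theorem is about, and cannot be done with a base change of bounded degree anyway. What the paper's toroidalisation delivers is a toroidal morphism with bounded relative degree, \emph{not} a reduced fibre.

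Second, and more seriously, you claim that ``the discrepancy and $\epsilon$-lc constraints, pulled back to $X'$, confine $v_T$ to a bounded polytope inside $\Delta$.'' This is the entire difficulty of the theorem, and it is not true in the form you state it. The pair $(W,B_W)$ is not toric and is not toroidal with respect to the model you build: only $(X,\Lambda)$ is toroidalised. The $\epsilon$-lc hypothesis on $(W,B_W)$ is a constraint on an arbitrary birational modification whose geometry bears no direct relation to the fan $\Delta$, so there is no polytope membership to check. The paper devotes Section \ref{ss-strict-transform} to a notion of strict transform of divisors via correspondences, proves a nontrivial comparison of log discrepancies (Proposition \ref{p-disc-after-str-transform}) precisely to transport the $\epsilon$-lc data from $(W,B_W)$ to a model living over the toric $(\PP^{d-1}_Z,\Lambda)$, and then argues by contradiction rather than directly: if $\mu_T F$ were large, the Mori fibre space $V\to Z$ associated to $T$ would have a divisor $D$ with $a(D,V,0)<\epsilon'$; Lemma \ref{l-Nsigma-Y-model} then forces $T^\sim\subseteq N_\sigma(K_Y+\Theta_Y/Z)$, which contradicts the nefness of $K_W+B_W$ over $Z$ (after a careful comparison using the strict-transform pair). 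None of this is ``reading off a bounded polytope.'' Your plan, as written, treats the hardest part of the theorem as a routine combinatorial consequence of toroidalisation, and therefore does not constitute a proof.
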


This can be viewed as a much stronger relative version of [\ref{B-BAB}, Theorem 1.8] over curves. The setting of [\ref{B-BAB}, Theorem 1.8] is global where we have $W=X$ but in the above theorem $W\to X$ is a birational morphism and this makes the proof much harder (it is important to allow this flexibility for applications and also for the proof to go through). 
The proof of [\ref{B-BAB}, Theorem 1.8] goes via reduction to the log smooth case making use of the fact that bounded families of projective varieties have bounded resolutions, and then reducing to the toric case, that is, when $(X,\Lambda)$ is toric. 

In the relative case above, we cannot simply reduce to the log smooth case because relative bounded resolutions do not exist even in dimension $d=2$. Instead, we use bounded toroidal models (\ref{t-bnd-torification}). In turn, we use the toroidal models to reduce to the toric case. Treating this toric case turned out to be unexpectedly very hard and time-consuming. Note that by toric case we mean $(X,\Lambda)$ is toric while $(W,B_W)$ might be far from toric.
It is worth mentioning that despite the similarities, the proofs of \ref{t-bnd-mult-lc-places-fib-main} and [\ref{B-BAB}, Theorem 1.8] are overall very different.

\bigskip

{\textbf{\sffamily{Bounded toroidalisation of fibrations over curves.}}}
The result on bounded toroidalisations (\ref{t-bnd-torification}) mentioned above roughly speaking says that given a relatively bounded family over a curve, we can alter it to a relatively bounded and toroidal family. This was proved in an earlier version of this paper using families of nodal curves, but its proof has been moved to the new paper [\ref{B-toroid-toric-model-fib}].

\bigskip

{\textbf{\sffamily{A few words about the proofs of Theorems \ref{t-mc-sh-conj} and \ref{t-bnd-mult-lc-places-fib-main}.}}}
We present a brief discussion of some of the ideas of the proofs of \ref{t-mc-sh-conj} and \ref{t-bnd-mult-lc-places-fib-main}.
Assume that we are in the setting of Theorem  \ref{t-mc-sh-conj}.

Step 1. Taking hyperplane sections on $Z$ (and on appropriate birational models), it is possible to reduce the theorem to the case when $\dim Z=1$. Fixing a closed point $z\in Z$, the theorem is equivalent to finding a real number $\tau>0$ depending only on $d,\epsilon$ so that $(X,B+\tau f^*z)$ is lc. If necessary, we will shrink $Z$ around $z$ without notice. Let $t$ be the maximal real number so that $(X,B+tf^*z)$ is $\frac{\epsilon}{2}$-lc. It is enough to show that $t$ is bounded from below away from zero. There is a prime divisor $T$ over $X$ and mapping to $z$ with 
$$
a(T,X,B+tf^*z)=\frac{\epsilon}{2}.
$$ 
It is enough to show that the multiplicity of $T$ in the fibre of $V\to Z$ over $z$ is bounded from above for any resolution $V\to X$ on which $T$ appears. It is then possible to replace $\epsilon$ and replace $X\to Z$ with a Fano fibration so that $T=\Supp f^*z$ and $\mu_TB=1-{\epsilon}$. We want to show that the multiplicity $\mu_Tf^*z$ is bounded from above (this is the setting of \ref{cor-mult-fib}).

Step 2. Applying [\ref{B-Fano}], there is a fixed $n$ so that $K_X$ has an $n$-complement $K_X+\Lambda$ over $z$ which means $(X,\Lambda)$ is lc and $n(K_X+\Lambda)\sim 0$. 
Moreover, starting with $\epsilon$ small enough and using $\mu_TB=1-{\epsilon}$, we can ensure $\mu_T\Lambda=1$.
For simplicity, let's assume that $-K_X$ is ample over $Z$. Applying BAB [\ref{B-BAB}], the general log fibres $(F,\Lambda_F)$ belong to a bounded family, and replacing $n$, the linear system $|-nK_F|$ is very ample. We can use this to find a birational map $X\bir Y/Z$ so that if $D$ is the birational transform of the horizontal part of $\Supp \Lambda$, then $(Y,D)\to Z$ is a relatively bounded family (essentially this means all the log fibres are bounded including the fibre over $z$). However, $Y,D$ may have very bad singularities. 

Step 3. Applying toroidalisation, we alter the family $(Y,D)\to Z$ to a toroidal family $(Y',D')\to (Z',E')$ which is relatively bounded. The advantage of this is that now $(Y',D')$ is an lc pair with formally toric singularities. Moreover,
if $K_{Y'}+\Lambda_{Y'}$ is the pullback of $K_X+\Lambda$ under the rational map $Y'\bir X$, then we can ensure that $\Lambda_{Y'}\le D'$. Also 
we can find a prime divisor $T'$ over $Y'$, derived from $T$, with $a(T',Y',\Lambda_{Y'})=0$ which implies $a(T',Y',D')=0$ meaning $T'$ is toroidal with respect to $(Y',D')$. Let $K_{Y'}+B_{Y'}$ be the pullback of $K_X+B$ under $Y'\bir X$. In general, $B_{Y'}$ may have very negative coefficients. So it is difficult to work on $Y'$. But after some modifications, we can construct a birational model $(W,B_W)\to Y'$ where $(W,B_W)$ is $\epsilon$-lc, $K_W+B_W$ is nef over $Y'$, $a(T',W,B_W)\le 1$, and the pushdown of $B_W$ to $Y'$ is the effective part of $B_{Y'}$. Applying induction on dimension, we can assume the centre of $T'$ on $Y'$ is a closed point $y'$.

Step 4. Although we have $(Y',D')$ which is toroidal and relatively bounded over $Z'$ but $Y'$ might be far from being Fano type over $Z'$. The idea is then to pass to a model which is not only toroidal but actually toric over a formal neighbourhood of $z'$ where $z'$ is the image of $y'$.  We will construct a model $(Y'',D'')\to Z'$ with a point $y''\in Y''$ so that $(Y',D')$ and $(Y'',D'')$ are formally isomorphic near $y',y''$. We will use this to find a prime divisor $T''$ over $Y''$ with centre $y''$, corresponding to $T'$. Moreover, we ensure that there is a birational map $Y''\bir P=\PP^{d-1}_{Z'}/Z'$ so that if $G$ is the sum of the coordinate hyperplanes on $P$ (induced by $\PP^{d-1}$) plus the fibre over $z'$, then $T''$ is toroidal with respect to $(P,G)$. Additionally, we will use the formal isomorphism between $(Y',D'),y'$ and $(Y'',D''),y''$ to find a birational model $(V,B_V)\to P$ which is $\epsilon$-lc, $K_V+B_V$ is nef over $P$, and $T''$ is a divisor on $V$ (which in particular means $a(T'',V,B_V)\le 1$). We then reduce the original problem to showing that the multiplicity of $T''$ in the fibre of $V\to Z'$ is bounded from above.  

Step 5. We will modify our setting so that we can assume $Z'=\A^1$. In particular, $(P,G)\to Z'$ is toric. Using the fact that $T''$ is toroidal with respect to $(P,G)$, we will translate the problem of bounding the multiplicity of $T''$ in the fibre of $V\to Z'$ to a purely toric problem (see \ref{p-Nsigma-Y-model}). We will solve the toric problem which completes the proof of \ref{t-mc-sh-conj}. 

Now assume we are in the setting of Theorem \ref{t-bnd-mult-lc-places-fib-main}. The idea is to use toroidalisation to reduce to the case when $(X,\Lambda)$ is toroidal similar to Step 3 above. The rest of the argument is then similar to those in Steps 4,5 above. In practice though we first prove Theorem \ref{t-bnd-mult-lc-places-fib-main} and then derive Theorem \ref{t-mc-sh-conj} following arguments as in Steps 1,2,3.

\bigskip

{\textbf{\sffamily{Plan of the paper.}}}
In Section 2 we collect some preliminary results. In Sections 3,4 we discuss couples, toroidal geometry, and relatively bounded toroidalisation of relatively bounded families together with their toric models. In Section 5 we introduce and study strict transform of divisors in not necessarily birational settings. In Section 6 we reduce Theorem \ref{t-bnd-mult-lc-places-fib-main} first to the toroidal case and then to the relatively projective space case. In Section 7 we reduce Theorem \ref{t-bnd-mult-lc-places-fib-main} to a toric problem and then settle the problem. In Section 8 we prove Theorem \ref{t-mc-sh-conj} and its corollaries \ref{cor-mckernan-conj}, \ref{cor-mult-fib}, \ref{cor-klt-comp} together with Theorem \ref{t-gonality}. In Section 9 we prove Theorem \ref{t-mc-pro-conj} and its version for generalised pairs along with Corollary \ref{t-cy-index-conj}. In Section 10 we prove Theorem \ref{t-cb-sing-usual-fib} and its version for generalised pairs.   
\bigskip

{\textbf{\sffamily{Acknowledgements.}}}
This work was partially done at the University of Cambridge. It was completed at Tsinghua University with support of a grant from Tsinghua University and a grant of the National Program of Overseas High Level Talent. Thanks to Florin Ambro, Bingyi Chen, Jingjun Han, Xiaowei Jiang, Junpeng Jiao, Santai Qu, Roberto Svaldi, Burt Totaro, and Yu Zou for their valuable comments. 
And thanks to the participants of activities devoted to this work, including a workshop in June 2023 and a seminar series in March--May 2024 at Tsinghua University and a workshop in May 2024 at Fudan University. 

%%%%%%%%%%%%%%%%%%%%%%%
%%%%%%%%%%%%%%%%%%%%%%%

\section{\bf Preliminaries}

We work over an algebraically closed field $k$ of characteristic zero. Varieties are all quasi-projective over $k$ unless stated otherwise.

\subsection{Morphisms}

An \emph{alteration} is a surjective projective morphism $Y\to X$ of varieties of the same dimension, hence it is generically finite. 
A \emph{contraction} is a projective morphism $f\colon X\to Z$ with $f_*\mathcal{O}_X=\mathcal{O}_Z$, hence it is surjective with connected fibres.  

Given a morphism $g\colon Y\to X$ of schemes and a subset $T\subset X$, $g^{-1}T$ denotes the set-theoretic inverse image of $T$. If $T$ is a closed subscheme, we then consider $g^{-1}T$ with its induced reduced scheme structure. But if we consider the scheme-theoretic inverse image of $T$, we will say so explicitly. 

\subsection{Divisors, degree, and volume}\label{ss-divs-deg}
Let $X$ be a normal variety and let $D$ be an $\R$-divisor.  Writing $D=\sum d_iD_i$ where $D_i$ are the distinct irreducible components of $D$, for each real number $a$ we define 
$D^{\le a}=\sum\min\{a,d_i\}D_i$. For a prime divisor $T$ on $X$, $\mu_TD$ denotes the coefficient of $T$ in $D$. If $D$ is $\R$-Cartier and if $T$ is a prime divisor over $X$, i.e., on some birational modification $g\colon Y\to X$, then by $\mu_TD$ we mean $\mu_Tg^*D$. Here and elsewhere,
by a \emph{birational modification}, we mean a birational contraction $Y\to X$ from a normal variety.

Let $f\colon X\to Z$ be a surjective projective morphism of varieties. 
For an $\R$-divisor $D$ on $X$, we define 
$$
|D|_{\R/Z}=\{D'  \mid  0\le D'\sim_\R D/Z\}.
$$
Now let $A$ be a $\Q$-Cartier divisor on $X$. 
For a Weil divisor $D$ on $X$ we define the \emph{relative degree} of $D$ over $Z$ with respect to $A$ 
as 
$$
\deg_{A/Z}D:=(D|_F)\cdot (A|_F)^{n-1}
$$ 
where $F$ is a general fibre of $f$ and $n=\dim F$. 
It is clear that this is a generic degree,  so the vertical$/Z$ components of $D$ do not contribute 
to the degree. Note that $F$ may not be irreducible: by a general fibre we mean fibre over a general point of $Z$.
In practice, we take $A$ to be ample over $Z$.
A related notion is the \emph{relative volume} of $D$ over $Z$ which we define as $\vol_{/Z}(D):=\vol(D|_F)$.

For a morphism $g\colon V\to X$ of varieties (or schemes) and an $\R$-Cartier $\R$-divisor $N$ on $X$, 
we sometimes write $N|_V$ instead of $g^*N$. 

For a birational map $X\bir X'$ (resp. $X\bir X''$)(resp. $X\bir X'''$)(resp. $X\bir Y$) of varieties 
whose inverse does not contract divisors, and for 
an $\R$-divisor $D$ on $X$, we usually denote the pushdown of $D$ to $X'$ (resp. $X''$)(resp. $X'''$)(resp. $Y$) 
by $D'$ (resp. $D''$)(resp. $D'''$)(resp. $D_Y$).

\subsection{Pairs and singularities}\label{ss-pairs}
A \emph{pair} $(X,B)$ consists of a normal variety $X$ and an $\R$-divisor $B\ge 0$ such that $K_X+B$ is $\R$-Cartier. We call $B$ the \emph{boundary divisor}.

Let $\phi\colon W\to X$ be a log resolution of a pair $(X,B)$. Let $K_W+B_W$ be the 
pullback of $K_X+B$. The \emph{log discrepancy} of a prime divisor $D$ on $W$ with respect to $(X,B)$ 
is defined as 
$$
a(D,X,B):=1-\mu_DB_W.
$$
A \emph{non-klt place} of $(X,B)$ is a prime divisor $D$ over $X$, that is, 
on birational modifications of $X$,  such that $a(D,X,B)\le 0$, and a \emph{non-klt centre} is the image of such a $D$ on $X$. 

We say $(X,B)$ is \emph{lc} (resp. \emph{klt})(resp. \emph{$\epsilon$-lc}) if 
 $a(D,X,B)\ge 0$ (resp. $>0$)(resp. $\ge \epsilon$) for every $D$. This means that  
every coefficient of $B_W$ is $\le 1$ (resp. $<1$)(resp. $\le 1-\epsilon$). 
Note that since $a(D,X,B)=1$ for most prime divisors, we necessarily have $\epsilon\le 1$.

A \emph{log smooth} pair is a pair $(X,B)$ where $X$ is smooth and $\Supp B$ has simple 
normal crossing singularities. Assume $(X,B)$ is a log smooth pair and assume $B=\sum_{i=1}^r B_i$ is reduced  
where $B_i$ are the irreducible components of $B$. 
A \emph{stratum} of $(X,B)$ is an irreducible component of $\bigcap_{i\in I}B_i$ for some non-empty $I\subseteq \{1,\dots,r\}$.  
Since $B$ is reduced, a stratum is nothing but a non-klt centre of $(X,B)$.

\subsection{b-divisors}\label{ss-b-div}
A \emph{b-$\R$-divisor} $\bf M$ over a normal variety $X$ consists of an $\R$-divisor ${\bf M}_Y$ on each birational modification $Y\to X$ so that if $Y\to X$ and $Y'\to X$ are two birational modifications with corresponding divisors ${\bf M}_Y, {\bf M}_{Y'}$ and so that the induced map $Y'\bir Y$ is a morphism, then ${\bf M}_Y$ is the pushdown of ${\bf M}_{Y'}$. 
A \emph{b-$\R$-divisor} $\bf M$ is \emph{b-$\R$-Cartier} if there is a birational modification $Y\to X$ so that ${\bf M}_Y$ is $\R$-Cartier and so that for any birational modification $Y'\to Y$, ${\bf M}_{Y'}$ is the pullback of ${\bf M}_Y$. 

To define generalised pairs below we will use b-divisors but with different notation. We just need to keep in mind that a b-$\R$-Cartier b-$\R$-divisor over $X$ is determined by the choice of a birational modification $Y\to X$ and an $\R$-Cartier $\R$-divisor $M$ on $Y$ up to the following equivalence: 
 another birational modification $Y'\to X$ and $\R$-Cartier $\R$-divisor
$M'$ defines the same b-$\R$-Cartier  b-$\R$-divisor if there is a common resolution $W\to Y$ and $W\to Y'$ on which the pullbacks of $M$ and $M'$ coincide.  

A b-$\R$-Cartier  b-$\R$-divisor  represented by some $Y\to X$ and $M$ is \emph{b-Cartier} if  $M$ is 
b-Cartier, i.e. its pullback to some resolution is Cartier.

\subsection{Generalised pairs}\label{ss-gpp}
A \emph{generalised pair} consists of 
\begin{itemize}
\item a normal variety $X$ equipped with a projective
morphism $X\to Z$, 

\item an $\R$-divisor $B\ge 0$ on $X$, and 

\item a b-$\R$-Cartier  b-$\R$-divisor over $X$ represented 
by some birational modification $X' \overset{\phi}\to X$ and $\R$-Cartier $\R$-divisor
$M'$ on $X$
\end{itemize}
such that $M'$ is nef$/Z$ and $K_{X}+B+M$ is $\R$-Cartier,
where $M:= \phi_*M'$. 

To specify such a generalised pair we say that $(X,B+M)$ is a generalised pair with data $X'\overset{\phi}\to X\to Z$ and $M'$, and refer to $M'$ as the nef part. But sometimes we just say $(X,B+M)$ is a generalised pair when $X'\overset{\phi}\to X\to Z$ and $M'$ are understood and there is no danger of confusion. Also, since a b-$\R$-Cartier b-divisor is defined birationally, 
in practice we will often replace $X'$ with a resolution and replace $M'$ with its pullback.
When $Z$ is a point we drop it but say the pair is projective. 

Now we define singularities of a generalised pair $(X,B+M)$.
Replacing $X'$, we can assume $\phi$ is a log resolution of $(X,B)$. We can write 
$$
K_{X'}+B'+M'=\phi^*(K_{X}+B+M)
$$
for some uniquely determined $B'$. For a prime divisor $D$ on $X'$, the \emph{generalised log discrepancy} 
$a(D,X,B+M)$ is defined to be $1-\mu_DB'$. 

We say $(X,B+M)$ is 
\emph{generalised lc} (resp. \emph{generalised klt})(resp. \emph{generalised $\epsilon$-lc}) 
if for each $D$ the generalised log discrepancy $a(D,X,B+M)\ge 0$ (resp. $>0$)(resp. $\ge \epsilon$).

For the basic theory of generalised pairs, see [\ref{BZh}, Section 4] and [\ref{B-gen-pairs}].

\subsection{Minimal models, Mori fibre spaces, and MMP}
Let $ X\to Z$ be a
projective morphism of normal  varieties and $D$ be an $\R$-Cartier $\R$-divisor
on $X$. Let $Y$ be a normal  variety, projective over $Z$, and $\phi\colon X\bir Y/Z$
be a birational map whose inverse does not contract any divisor. 
Assume $D_Y:=\phi_*D$ is also $\R$-Cartier and that 
there is a common resolution $g\colon W\to X$ and $h\colon W\to Y$ such that
$E:=g^*D-h^*D_Y$ is effective and exceptional$/Y$, and
$\Supp g_*E$ contains all the exceptional divisors of $\phi$.

Under the above assumptions, we call $Y$ 
a \emph{minimal model} of $D$ over $Z$ if $D_Y$ is nef$/Z$.
On the other hand, we call $Y$ a \emph{Mori fibre space} of $D$ over $Z$ if there is an extremal contraction
$Y\to T/Z$ with $-D_Y$  ample$/T$ and $\dim Y>\dim T$.

If one can run a \emph{minimal model program} (MMP) on $D$ over $Z$ which terminates 
with a model $Y$, then $Y$ is either a minimal model or a Mori fibre space of 
$D$ over $Z$. 

\subsection{Fano type varieties}
Assume that $X$ is a variety and $X\to Z$ is a contraction. We say $X$ is \emph{of Fano type over} $Z$ 
if there is a boundary $C$ such that $(X,C)$ is klt and $-(K_X+C)$ is ample over $Z$ 
(or equivalently if there is a boundary $D$ such that $(X,D)$ is klt and $-(K_X+D)$ is nef and big over $Z$). This is equivalent to 
having a boundary $B$ such that $(X,B)$ is klt, $K_X+B\sim_\R 0/Z$, and $B$ is big over $Z$. 
By [\ref{BCHM}], we can run an MMP over $Z$ on any $\R$-Cartier $\R$-divisor $D$ on $X$ and the MMP ends 
with a minimal model or a Mori fibre space for $D$.

\subsection{Nakayama divisor}
Let $X$ be a normal projective variety and let $D$ be a pseudo-effective $\R$-Cartier $\R$-divisor on $X$.
When $X$ is smooth, Nakayama [\ref{Nakayama}] associates an $\R$-divisor $N_\sigma(D)\ge 0$ to $D$, by a limiting process, which intuitively captures the numerically negative part of $D$. If $X$ is not smooth, we can define $N_\sigma(D)$ to be $\psi_*N_\sigma(\psi^*D)$ for a resolution $\psi\colon W\to X$ (this does not depend on the choice of the resolution, by [\ref{Nakayama}, Chapter III, Theorem 5.16]).
If we can run an MMP on $D$ ending with a minimal model $X'$ for $D$, then  denoting $X\bir X'$ by $\phi$, one can check that $N_\sigma(D)=D-\phi^*D'$ where $D'$ is the pushdown of $D$ and $\phi^*D'$ means pulling back $D'$ to a common resolution of $X,X'$ and then push down to $X$.
   
We define a relative version but only in the Fano type setting in which case we can run MMP. Let $X\to Z$ be a projective morphism of normal varieties and assume $X$ is of Fano type over $Z$. Let $D$ be an $\R$-Cartier $\R$-divisor on $X$ which is pseudo-effective$/Z$. By the Fano type assumption, we can run an MMP on $D$ over $Z$ which terminates with a minimal model $X'$ for $D$ over $Z$. Denoting $X\bir X'$ by $\phi$, we define $N_\sigma(D/Z):=D-\phi^*D'$ where $D'$ is the pushdown of $D$. In particular, $N_\sigma(D/Z)\ge 0$ by the negativity lemma. If $Z$ is a point, then this definition is consistent with the previous paragraph.

Actually, in the relative setting, we are mainly interested in the support of $N_\sigma(D/Z)$ rather than its coefficients but it is convenient to use the notation $N_\sigma(D/Z)$.

\subsection{Complements}\label{ss-compl}
Let $(X,B)$ be a pair  and let $X\to Z$ be a contraction. 
A \emph{strong $n$-complement} of $K_{X}+B$ over a point $z\in Z$ is of the form 
$K_{X}+{B}^+$ where over some neighbourhood of $z$ we have the following properties:
\begin{itemize}
\item $(X,{B}^+)$ is lc, 

\item $n(K_{X}+{B}^+)\sim 0$, and 

\item ${B}^+\ge B$.
\end{itemize}

When $Z$ is a point, we just say that $K_X+B^+$ is a strong $n$-complement of $K_X+B$. We recall one of the main 
results of [\ref{B-Fano}] on complements.

\begin{thm}[\ref{B-Fano}, Theorem 1.8]\label{t-bnd-compl-usual-local}
Let $d$ be a natural number and $\mathfrak{R}\subset [0,1]$ be a finite set of rational numbers.
Then there exists a natural number $n$
depending only on $d$ and $\mathfrak{R}$ satisfying the following.  
Assume $(X,B)$ is a pair and $X\to Z$ is a contraction such that 
\begin{itemize}
\item $(X,B)$ is lc of dimension $d$ and $\dim Z>0$,

\item the coefficients of $B$ are in $\Phi(\mathfrak{R})$, 

\item $X$ is of Fano type over $Z$, and 

\item $-(K_{X}+B)$ is nef over $Z$.
\end{itemize}
Then for any point $z\in Z$, there is a strong $n$-complement $K_{X}+{B}^+$ of $K_{X}+{B}$ 
over $z$. Moreover, the complement is also an $mn$-complement for any $m\in \N$. 
\end{thm}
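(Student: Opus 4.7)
Since the question is local over $Z$ around $z$, I would first shrink $Z$ and use that $X$ is of Fano type over $Z$ to run an MMP. After replacing $(X,B)$ with a small modification if necessary, I would pass to the fibre $F$ over $z$ (or its reduction) and argue that producing a complement over $z$ can be reduced to an appropriate global problem: take a general hyperplane section through $z$ on $Z$ and pull back, or restrict to the fibre and lift, exploiting $\dim Z>0$ to cut things down. The \emph{moreover} part about $mn$-complements is then a consequence of the fact that once $n(K_X+B^+)\sim 0$ and $B^+\geq B$, any multiple $m$ also gives a valid complement.

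The core of the argument would proceed by induction on $d$, split into two mutually exclusive cases according to whether $(X,B)$ admits a suitable non-klt centre. \emph{Case 1 (Lifting from a non-klt centre).} If there is a prime divisor $V$ over $X$ with $a(V,X,B)=0$ whose centre on $X$ meets the fibre over $z$, I would extract $V$ and use Shokurov-type adjunction to produce on $V$ a generalised pair structure $(V,B_V+M_V)$ with $B_V\in \Phi(\mathfrak{S})$ for a finite set $\mathfrak{S}$ depending only on $\mathfrak{R}$. By induction on dimension (in the generalised-pair version of the theorem), $K_V+B_V+M_V$ has a bounded $n'$-complement. The heart of the step is lifting this complement back to $X$ via Kawamata--Viehweg vanishing applied to a suitable twist of $-n(K_X+B)$, which yields surjectivity of a restriction map $H^0(X,\cdot)\to H^0(V,\cdot)$ and produces the desired $B^+$.

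\emph{Case 2 (Creating a non-klt centre).} If $(X,B)$ is klt, I would need to manufacture a non-klt centre through a controlled boundary. The main input here is effective birationality: for $\epsilon$-lc Fano varieties of dimension $d$, a bounded multiple of $-K$ defines a birational map. Applying an analogue to the fibre setting, one finds $0\leq D\sim_{\Q} -m(K_X+B)/Z$ with sufficiently high multiplicity at a chosen point of the fibre, so that $(X,B+tD)$ acquires a non-klt centre for some rational $t\le 1$ with bounded denominator. Then one is reduced to Case~1 for this modified pair and transfers the resulting complement back, using that $-(K_X+B)$ is nef$/Z$. The hard part will be Case 2: it requires boundedness (BAB-type) statements whose proofs themselves interact with complements, so one must set up a delicate simultaneous induction on $d$, on the structure of non-klt centres, and on the ``exceptional versus non-exceptional'' dichotomy for Fano pairs; additionally, controlling the bounds uniformly when coefficients in $B$ approach $1$ (as allowed by $B\in \Phi(\mathfrak{R})$) and handling the exceptional case where no non-klt centre can be created by multiplying sections both demand essentially new input beyond standard vanishing and MMP.
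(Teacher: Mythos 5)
This statement is not proved in the paper: it is labelled as Theorem 1.8 of the reference [B-Fano] (``Anti-pluricanonical systems on Fano varieties'') and is imported verbatim as a black box. There is therefore no ``paper's own proof'' to compare against; the present paper merely cites the result where needed (e.g.\ in the proofs of Theorem \ref{t-mc-sh-conj} and Corollary \ref{cor-klt-comp}).

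That said, your sketch does capture the broad architecture of the proof in [B-Fano] — induction on dimension, the dichotomy between the case where a non-klt place with centre meeting the relevant fibre already exists (treated by Shokurov-type adjunction to a generalised pair on the extracted divisor, together with Kawamata--Viehweg vanishing to lift sections), and the case where one must create such a place by taking a suitable $0\le D\sim_\Q -m(K_X+B)/Z$ with controlled multiplicity — as well as the observation that the ``moreover'' clause about $mn$-complements is automatic once $n(K_X+B^+)\sim 0$ holds. You are also right that the genuine difficulty is in the second case: it entangles boundedness of complements with effective birationality and BAB-type boundedness, forcing a simultaneous induction in which exceptional pairs must be isolated and treated separately. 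What your sketch necessarily leaves out is the substantial technical content of [B-Fano]: the precise reduction steps, the use of generalised pairs with bounded nef parts in the adjunction on non-klt centres, the handling of hyperstandard coefficients $\Phi(\mathfrak{R})$ under adjunction, and the resolution of the circularity with BAB. As a reconstruction of the cited proof's strategy your proposal is reasonable; as a self-contained argument it is, as you acknowledge, far from complete, and the present paper makes no attempt to supply these details — it simply invokes the theorem.
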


Recall that 
$
\Phi(\mathfrak{R})=\left\{1-\frac{r}{m} \mid r\in \mathfrak{R},~ m\in \N\right\}.
$

\subsection{Adjunction for fibrations}\label{s-adjunction} 
We recall the generalised adjunction formula for fibrations in a setting suitable for this paper. This formula has been developed gradually in different degrees of generality, see [\ref{ka97}][\ref{ka98}][\ref{am05}][\ref{am04}][\ref{Filipazzi-18}][\ref{CHLX-folitation-g-pair}] (also see [\ref{B-lcyf}]).  

(1) 
Assume that 
\begin{itemize}
\item $(X,B+M)$ is a generalised klt generalised pair with data $X'\to X\to S$ and $M'$,

\item $f\colon X\to Z/S$ is a contraction with $\dim Z>0$, 

\item $K_{X}+B+M\sim_\R 0/Z$.
\end{itemize} 
Then the \emph{generalised adjunction formula} (also called canonical bundle formula) says that we can write 
$$
K_{X}+B+M\sim_\R f^*(K_Z+B_Z+M_Z)
$$
where the \emph{discriminant divisor} $B_Z$ is canonically defined and the \emph{moduli divisor} $M_Z$ is determined up to $\R$-linear equivalence.
 
More precisely, $B_Z$ can be defined as follows. Let $D$ be a prime divisor on $Z$. Let $t$ be the generalised lc threshold of $f^*D$ with respect to $(X,B+M)$ 
over the generic point of $D$, that is, $t$ is the largest real number so that $(X,B+M+tf^*D)$ is generalised lc over the generic point of $D$. This makes sense even if $D$ is not $\Q$-Cartier because we only need 
the pullback $f^*D$ over the generic point of $D$ where $Z$ is smooth. 
We then let the coefficient of $D$ in $B_Z$ to be $1-t$. 
Having defined $B_Z$, we can find $M_Z$ giving 
$$
K_{X}+B+M\sim_\R f^*(K_Z+B_Z+M_Z)
$$
where $M_Z$ is determined up to $\R$-linear equivalence over $S$.  

Let $Z'\to Z$ be a birational modification. There is 
a birational modification $X'\to X$  so that the induced map $X'\bir Z'$ is a morphism.  
Let $K_{X'}+B'+M'$ be the 
pullback of $K_{X}+B+M$. We can similarly define $B_{Z'},M_{Z'}$ for $(X',B'+M')$ over $Z'$. In this way, we get the \emph{discriminant b-divisor ${\bf{B}}_Z$} of adjunction for $(X,B+M)$ over $Z$. 
Fixing a choice of $M_Z$, we can pick the $M_{Z'}$ consistently so that it also defines a b-divisor ${\bf{M}}_Z$ which we refer to as the \emph{moduli b-divisor} of adjunction for $(X,B+M)$ over $Z$. 

Moreover, if $B,M'$ are $\Q$-divisors, then ${\bf B}_Z$ is a b-$\Q$-divisor and we can choose ${\bf M}_Z$ so that it is also a b-$\Q$-divisor.

(2) The moduli b-divisor ${\bf M}_Z$ is b-$\R$-Cartier in the sense of \ref{ss-b-div} and it is nef over $S$, by [\ref{ka97}][\ref{ka98}][\ref{am05}][\ref{am04}][\ref{Filipazzi-18}] in the case of $\Q$-divisors and [\ref{CHLX-folitation-g-pair}, Theorem 11.4.4] in the case of $\R$-divisors. In particular, we can view $(Z,B_Z+M_Z)$ as a generalised pair with data $Z'\to Z\to S$ and $M_{Z'}$ where $Z'\to Z$ is any high resolution. In this paper, we are often concerned with the singularities of $(Z,B_Z+M_Z)$ (e.g. in Theorem \ref{t-mc-sh-conj}) in which case $S$ is not relevant and we may simply take $Z\to S$ to be the identity morphism.

\subsection{Image of divisors}

\begin{lem}\label{l-desced-prime-div}
Assume that $Y\to X$ is a dominant morphism of varieties, which is \'etale at a closed point $y\in Y$. Assume that $D$ is a prime divisor over $Y$ with centre passing through $y$. Then we can find resolutions $Y'\to Y$ and $X'\to X$ so that the induced map $Y'\bir X'$ is a morphism, $D$ is a divisor on $Y'$, and the image of $D$ on $X'$ is a divisor.
\end{lem}
\begin{proof}
First, pick a resolution $X'\to X$, let $Y''$ be the main component of $Y\times_XX'$, and let $y''\in Y''$ be a closed point that maps to $y$ and is contained in the centre of $D$ on $Y''$. Then the induced map $Y''\to X'$ is \'etale at $y''$, in particular, $Y''$ is smooth at $y''$. Take a resolution $Y'\to Y''$ which is an isomorphism over $y''$. Replacing $Y\to X$ and $y$ with $Y'\to X'$ and $y''$, we can assume that $Y$ is smooth at $y$. If necessary, we will replace $y$ by a general closed point of the centre of $D$ on $Y$. 

Let $C$ be the centre of $D$ on $Y$ and let $E$ be the closure of the image of $C$ on $X$. Shrinking $Y,X$, we can assume that $Y,X,C,E$ are all smooth and that $Y\to X$ is \'etale. Let $X'\to X$ be the blowup of $X$ along $E$. Shrinking $Y$ and letting $Y'=Y\times_XX'$, the induced map $Y'\to Y$ is the blowup of $Y$ along $C$. Also $Y'\to X'$ is \'etale. Replace $Y\to X,y$ with $Y'\to X',y'$ where $y'\in Y'$ is a closed point mapping to $y$ and contained in the centre of $D$ on $Y'$. Repeat this process. By [\ref{kollar-mori}, Lemma 2.45], after finitely many steps, $D$ is a divisor on $Y$. Since $Y\to X$ is \'etale, the image of $D$ on $X$ is also a divisor.  
\end{proof}

\subsection{Coordinate hyperplanes}

By coordinate hyperplanes of 
$$
\PP^n=\Proj k[\beta_0,\dots,\beta_n]
$$ 
we mean the hyperplanes 
defined by the $\beta_i$. When $Z$ is a variety, by coordinate hyperplanes on 
$\PP^n_Z=\PP^n\times Z$ we mean the pullback of the coordinate hyperplanes on 
$\PP^n$ via the projection $\PP^n_Z\to \PP^n$.

%%%%%%%%%%%%%%%%%%%%%%%
%%%%%%%%%%%%%%%%%%%%%%%

\section{\bf Couples and toroidal geometry}

\subsection{Couples} 

A \emph{couple} $(X,D)$ consists of a variety $X$ and a reduced Weil divisor $D$ on $X$.
This is more general than the definition given in [\ref{B-sing-fano-fib}] because we are not assuming $X$ to be normal 
nor projective. Also note that a couple is not necessarily a pair in the sense that we are not assuming 
$K_X+D$ to be $\Q$-Cartier. In this paper, we often consider a couple $(X,D)$ equipped with a 
\emph{surjective} projective morphism $X\to Z$ in which case we denote the couple as $(X/Z,D)$ or $(X,D)\to Z$. 
We say a couple $(X/Z,D)$ is \emph{flat} if both $X\to Z$ and $D\to Z$ are flat.

Let $\mathcal{P}$ be a set of couples. We say $\mathcal{P}$ is \emph{generically relatively bounded} if 
there exist natural numbers $d,r$ such that for each $(X/Z,D)\in \mathcal{P}$ we have the following: $\dim X-\dim Z\le d$ and there is a very ample$/Z$ divisor $A$ on $X$ such that 
$$
\deg_{A/Z}A\le r ~~\mbox{and} ~~\deg_{A/Z}D\le r.
$$
If in addition all the $(X/Z,D)\in \mathcal{P}$ are flat, we say that $\mathcal{P}$ is \emph{relatively bounded}. 

When $D=0$ for every $(X/Z,D)\in \mathcal{P}$, we say $\mathcal{P}$ is a set of generically relatively  bounded (resp. relatively bounded) varieties.

\subsection{Morphisms of couples}\label{ss-tower-couples}

(1) A \emph{morphism} $(Z,E)\to (V,C)$ of couples is a morphism $f\colon Z\to V$ such that 
$f^{-1}(C)\subseteq E$. 

(2) Assume $(V',C')\to (V,C)$ is a morphism of couples with $V'\to V$ \emph{dominant}.   
Suppose in addition that $(Z,E)\to (V,C)$ 
is a morphism of couples such that over the generic point of $Z$ we have: $Z\times_{V}V'$ is integral 
and not contained in $Z\times_{V}C'$.  
We then define the \emph{pullback} of $(V',C')$ by base change to $(Z,E)$ as 
follows. Let $Z'$ be the main component of $Z\times_{V}V'$ and let $E'$ be the 
codimension one part, with reduced structure, of the union of the inverse images of $C'$ and $E$ under 
$Z'\to V'$ and $Z'\to Z$, respectively. Note that if $C'$ and $E$ are supports of 
effective Cartier divisors, then $E'$ coincides with the union of the inverse images of $C'$ and $E$, 
with reduced structure.

%%%%%%%%%

\subsection{Toric geometry}\label{ss-toric-geometry}
We will follow [\ref{Cox-etal}] for concepts and results in toric geometry.
A \emph{toric variety} is a variety $X$ of dimension $d$ containing a torus $\mathbb{T}_X$ (that is, isomorphic to $(k^*)^d$) as an open subset so that the action of $\mathbb{T}_X$ on itself (induced by coordinate-wise multiplication of $(k^*)^d$) extends to an action on the whole $X$ [\ref{Cox-etal}, 3.1.1]. Here, $X$ is not necessarily normal. 
A \emph{toric morphism} $f\colon X\to Y$ between toric varieties is a morphism so that the restriction $f|_{\mathbb{T}_X}$ induces a morphism $\mathbb{T}_X\to \mathbb{T}_Y$ of algebraic groups and so that $f$ is equivariant with respect to the actions of the tori.  

A normal toric variety $X$ of dimension $d$ can also be described in terms of a fan structure $\Sigma$ in $\R^d$ [\ref{Cox-etal}, 3.1.8]. Moreover, if $D_i$ are all the prime toric (i.e. torus-invariant) divisors on $X$, then to give a $\Q$-Cartier toric divisor $D=\sum d_iD_i$ on $X$ is the same as giving its \emph{support function} $\phi_D\colon |\Sigma|\to \R$ which is linear on each cone in $\Sigma$ and $\phi_D(u_i)=-d_i$ for the primitive vector $u_i$ generating the ray corresponding to $D_i$  [\ref{Cox-etal}, 4.2.12]. If $g\colon W\to X$ is a toric morphism from another normal toric variety with fan $\Gamma$, then $g^*D$ is the divisor determined by the support function $|\Gamma|\to |\Sigma|\overset{\phi_D}\to \R$ where the first map is induced by $g$ [\ref{Cox-etal}, 6.2.7]. 

Let $X$ be a $\Q$-factorial normal toric variety given by a fan $\Sigma$ with toric prime divisors $D_i$. Assume $B=\sum b_iD_i$ and that $K_X+B$ is $\Q$-Cartier. Pick a toric prime divisor $E$ over $X$. We are interested in the log discrepancy $a(E,X,B)$. Shrinking $X$, we can assume that it is affine, say given by a cone $\sigma$ whose rays correspond to the $D_i$. We can uniquely write $e=\sum \alpha_iu_i$ where $e,u_i$ are the primitive vectors corresponding to $E,D_i$. Then  
$$
a(E,X,B)=\sum \alpha_i(1-b_i)
$$  
by [\ref{Ambro-toric-mld}, \S2]. This can be seen by taking a toric resolution $g\colon W\to X$ and 
considering the support function of $B-\Lambda$ keeping in mind that $K_W+\Lambda_W=g^*(K_X+\Lambda)$ where $\Lambda_W$ and $\Lambda$ are the sum of all the toric prime divisors on $W$ and $X$, respectively.

\subsection{Formally Cartier divisors}\label{ss-formally-cartier}

Let $X$ be a variety, $x\in X$ be a closed point, and $\widehat{X}=\Spec \widehat{\mathcal{O}}_{X,x}$ where $\widehat{\mathcal{O}}_{X,x}$ denotes the completion of the local ring ${\mathcal{O}_{X,x}}$ with respect to its maximal ideal. The local ring ${\mathcal{O}}_{{X},x}$ is a G-ring (meaning Grothendieck ring) by [\ref{Matsumura}, Corollary and Remark 1 on page 259], so by definition of G-rings, the geometric fibres of $\widehat{X} \to \Spec {\mathcal{O}}_{{X},x}$ are regular: in the language of commutative algebra, this says that the homomorphism ${\mathcal{O}}_{{X},x}\to \widehat{\mathcal{O}}_{{X},x}$ is regular. 

Now assume $X$ is normal. Then $\widehat{\mathcal{O}}_{{X},x}$ is normal by the previous paragraph and [\ref{Matsumura}, Theorem 32.2] (or by [\ref{Zariski}]), hence $\widehat{X}$ is normal.
 Let $D$ be a Weil divisor on $X$. We define $\widehat{D}$ on $\widehat{X}$ as follows.  
Let $U$ be the smooth locus of 
$X$ and let $\widehat{U}$ be its inverse image in $\widehat{X}$, and $\pi\colon \widehat{U}\to U$ the induced 
morphism. Then $D|_U$ is Cartier and its pullback $\pi^*D|_U$ is a well-defined Cartier divisor. Now 
let $\widehat{D}$ be the closure of $\pi^*D|_U$ in $\widehat{X}$. Note that the complement of $\widehat{U}$ in $\widehat{X}$ has codimension at least two.

When $X$ is normal and $D$ is an effective Weil divisor on $X$, we can view $D$ as the closed subscheme of $X$ defined by the ideal sheaf $\mathcal{O}_X(-D)$ and think of $\widehat{D}$ as the corresponding closed subscheme of $\widehat{X}$, that is, if $D$ is given by an ideal $I$ near $x$, then $\widehat{D}$ is given by $\widehat{I}$.

\begin{lem}\label{l-formally-cartier}
Let $X$ be a normal variety, $x\in X$ be a closed point, and $\widehat{X}=\Spec \widehat{\mathcal{O}}_{X,x}$. 
Let $D$ be a Weil divisor on $X$ and let $\widehat{D}$ 
be the corresponding divisor on $\widehat{X}$. Then $D$ is Cartier near $x$ if and only if $\widehat{D}$ is Cartier. 
\end{lem}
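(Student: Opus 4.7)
The plan is to reduce to a local question and then exploit faithfully flat descent of freeness along the completion map $A=\mathcal{O}_{X,x}\to\widehat A=\widehat{\mathcal{O}}_{X,x}$. First I would localize: since the statement is local at $x$, I may replace $X$ by $\Spec A$, so $\widehat X=\Spec\widehat A$ and the natural map $\pi\colon\widehat X\to X$ comes from the faithfully flat regular homomorphism $A\to\widehat A$. By [\ref{Matsumura}, Theorem 32.2], $\widehat A$ is normal. If $\dim X\le 1$ then $A$ and $\widehat A$ are DVRs and every Weil divisor is Cartier, so assume $\dim X\ge 2$.

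The forward implication is immediate: if $D=\divi(f)$ near $x$, then $D|_U=\divi(f)$ on the smooth locus $U\subseteq X$, so $\pi^*(D|_U)$ is cut out by the image of $f$ in $\widehat A$ on $\widehat U=\pi^{-1}(U)$, and since $\widehat X$ is normal with $\widehat X\setminus\widehat U$ of codimension $\ge 2$, the closure $\widehat D$ is $\divi(f)$ on all of $\widehat X$.

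For the converse I would rephrase Cartierness as a freeness statement on rank-one reflexive sheaves. Set $M=\mathcal{O}_X(D)_x$ (an $A$-module) and $\widehat M=\Gamma(\widehat X,\mathcal{O}_{\widehat X}(\widehat D))$ (a $\widehat A$-module). Then $D$ is Cartier near $x$ iff $M$ is free of rank one over $A$, and $\widehat D$ is Cartier iff $\widehat M$ is free of rank one over $\widehat A$. The crux is the natural identification
$$
M\otimes_A\widehat A \;\cong\; \widehat M.
$$
Granting this, $M$ is finitely generated (hence finitely presented) over the Noetherian local ring $A$, and since $\widehat A$ is faithfully flat over $A$, freeness of $M\otimes_A\widehat A$ descends to freeness of $M$, completing the proof.

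To establish the identification, I would verify that both modules are rank-one reflexive over $\widehat A$. For $\widehat M$ this is because it is the divisorial sheaf of a Weil divisor on the normal scheme $\widehat X$. For $M\otimes_A\widehat A$ this follows from the fact that for finitely presented modules $\Hom$ commutes with flat base change, giving
$$
(M\otimes_A\widehat A)^{**}\;\cong\;M^{**}\otimes_A\widehat A\;\cong\;M\otimes_A\widehat A.
$$
Both modules, viewed as sheaves on $\widehat X$, agree on $\widehat U$: on that open set $\mathcal{O}_X(D)$ restricts to the line bundle $\mathcal{O}_U(D|_U)$, its pullback under $\pi|_{\widehat U}$ is $\mathcal{O}_{\widehat U}(\pi^*(D|_U))$, and by the very definition of $\widehat D$ this equals $\mathcal{O}_{\widehat U}(\widehat D|_{\widehat U})$. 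Since $\pi$ is faithfully flat, for any prime $\mathfrak{q}\subset\widehat A$ over $\mathfrak{p}\subset A$ we have $\height(\mathfrak{q})\ge\height(\mathfrak{p})$, so $\widehat X\setminus\widehat U=\pi^{-1}(X\setminus U)$ has codimension $\ge 2$ in $\widehat X$. Two $S_2$-sheaves on the normal scheme $\widehat X$ that agree outside codimension $2$ are isomorphic, yielding the claim.

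The main obstacle I anticipate is the bookkeeping in the identification $M\otimes_A\widehat A\cong\widehat M$; no deep input is needed beyond normality of $\widehat A$, preservation of heights under $\pi$, and the standard fact that rank-one reflexive sheaves on a normal Noetherian scheme are determined by their behaviour outside codimension $2$.
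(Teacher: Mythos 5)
Your proof is correct and takes essentially the same route as the paper: identify $\rho^*\mathcal{O}_X(D)$ (i.e.\ $M\otimes_A\widehat A$) with $\mathcal{O}_{\widehat X}(\widehat D)$ by noting both are reflexive and agree off a codimension-two closed set of the normal scheme $\widehat X$, then descend freeness along the faithfully flat map $A\to\widehat A$. The paper cites [Hartshorne, Prop.~1.8] for reflexivity of the flat pullback and [Matsumura, Exercise~8.3] for descent of freeness, whereas you rederive the first via commutation of $\Hom$ with flat base change and invoke the standard faithfully flat descent of projectivity for the second; you also spell out the height-preservation argument for $\codim(\widehat X\setminus\widehat U)\ge 2$, which the paper leaves implicit. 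These are cosmetic differences, not a different method.
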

\begin{proof}
If $D$ is Cartier near $x$, then  $\widehat{D}$ is Cartier. We show the converse. 
Shrinking $X$ and changing $D$ linearly, we can assume $D$ is effective, hence $\mathcal{O}_{X}(-D)\subset \mathcal{O}_X$.
Since $X$ is normal, $\mathcal{O}_{X}(-D)$ is a reflexive coherent sheaf. Since the morphism 
$\rho\colon \widehat{X}\to X$ is flat, $\rho^*\mathcal{O}_{X}(-D)$ is reflexive too [\ref{Hartshorne}, Proposition 1.8]. 
Moreover,  $\mathcal{O}_{\widehat{X}}(-\widehat{D})$ is reflexive, actually invertible, since $\widehat{D}$ is Cartier. 
Now as observed above, denoting the smooth locus of $X$ by $U$, $D|_U$ is Cartier and so is $\widehat{D}|_{\widehat{U}}$. Therefore, 
$\rho^*\mathcal{O}_{X}(-D)$ coincides with $\mathcal{O}_{\widehat{X}}(-\widehat{D})$ on $\widehat{U}$, hence $\rho^*\mathcal{O}_{X}(-D)$ and $\mathcal{O}_{\widehat{X}}(-\widehat{D})$ are equal as both are reflexive and as the complement of $\widehat{U}\subset \widehat{X}$ has codimension at least two [\ref{Hartshorne}, Proposition 1.6].  Thus $\rho^*\mathcal{O}_{X}(-D)$ is invertible, so applying [\ref{Matsumura}, Exercise 8.3] implies $\mathcal{O}_{X}(-D)$ 
is invertible near $x$, hence $D$ is Cartier near $x$.
\end{proof}

%%%%%%%%%%%%%%%
\subsection{Toroidal couples}

A couple $(X,D)$ is \emph{toroidal} at a closed point $x\in X$ 
if there exist a \emph{normal} toric variety $W$ and a closed point $w\in W$ such that there is 
a $k$-algebra isomorphism 
$$
\widehat{\mathcal{O}}_{{X},{x}}\to \widehat{\mathcal{O}}_{{W},w}
$$ 
of completion of local rings so that the ideal of $D$ is mapped to the ideal of the toric boundary divisor 
$C\subset W$ (that is, the complement of the torus). Then  
there is a common \'etale neighbourhood of $X,x$ and $W,w$ [\ref{Artin}, Corollary 2.6].
We call $(W,C),w$ a \emph{local toric model} of $(X,D),x$.
We say $(X,D)$ is toroidal if it is toroidal at every closed point.

Now let $f\colon (X,D)\to (Y,E)$ be a morphism of couples. Let $x\in X$ be a closed point 
and let $y=f(x)$. We say $(X,D)\to (Y,E)$ 
is a \emph{toroidal morphism} at $x$ if there exist local toric models $(W,C),w $ and $(V,B),v$ of 
$(X,D),x$ and $(Y,E),y$, respectively, and a toric morphism $W\to V$ of toric varieties 
inducing a commutative diagram 
$$
\xymatrix{
\widehat{\mathcal{O}}_{{X},{x}}\ar[r] & \widehat{\mathcal{O}}_{{W},w}\\
\widehat{\mathcal{O}}_{{Y},{y}} \ar[u] \ar[r] & \widehat{\mathcal{O}}_{{V},v} \ar[u]
}
$$
where the vertical maps are induced by the given morphisms and the horizontal maps are isomorphisms 
induced by the local toric models. We say the morphism of couples $f\colon (X,D)\to (Y,E)$ is toroidal 
if it is toroidal at every closed point. 

For a systematic treatment of toroidal couples, see [\ref{KKMB}]. 

\begin{lem}\label{l-toroidal-couple-lc}
Let $(X,D)$ be a toroidal couple. Then $X$ is normal and Cohen-Macaulay, $K_X+D$ is Cartier, and  
$(X,D)$ is an lc pair.
\end{lem}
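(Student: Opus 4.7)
The plan is to fix a closed point $x\in X$ together with a local toric model $(W,C),w$ and to reduce each of the four assertions—normality, Cohen-Macaulayness, Cartier-ness of $K_X+D$ near $x$, and log canonicity at $x$—to the corresponding statement for the normal toric couple $(W,C)$ at $w$. On the toric side I would record the following standard facts: a normal toric variety is Cohen-Macaulay by Hochster's theorem; the invariant logarithmic top form $\frac{dt_1}{t_1}\wedge\cdots\wedge\frac{dt_d}{t_d}$ on the torus has divisor $-C$, so $K_W+C\sim 0$ and in particular is Cartier; and an equivariant toric resolution of $W$ is automatically a log resolution of $(W,C)$ on which the pullback of $K_W+C$ coincides with the full reduced boundary, showing that $(W,C)$ is lc.

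To transfer these back to $(X,D)$ at $x$, I would use the common \'etale neighborhood $\pi_X\colon Y\to X$, $\pi_W\colon Y\to W$ produced by Artin approximation (cited in the excerpt), which identifies the pullbacks of $D$ and $C$ on $Y$. Since an \'etale morphism preserves normality and Cohen-Macaulayness, and since both properties descend under the faithfully flat map $\pi_X$, these two properties transfer to $X$ near $x$. For the Cartier statement, the pullback of the invertible sheaf $\mathcal{O}_W(K_W+C)$ agrees with $\pi_X^*\mathcal{O}_X(K_X+D)$ after identifying canonical sheaves under the \'etale maps, hence $\mathcal{O}_X(K_X+D)$ becomes invertible on $Y$ and, by faithfully flat descent, is invertible near $x$; alternatively one can apply Lemma \ref{l-formally-cartier} to the formal divisor $K_{\widehat X}+\widehat D$, which corresponds to the Cartier divisor $K_{\widehat W}+\widehat C$. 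Finally, pulling back a toric log resolution of $(W,C)$ through the \'etale neighborhood yields a log resolution of $(X,D)$ near $x$ on which the log discrepancies of all exceptional and boundary divisors coincide with their toric counterparts, so $(X,D)$ is lc at $x$.

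The main obstacle I foresee is ensuring that the canonical class itself is correctly matched on the two sides—i.e.\ that the common \'etale neighborhood (or formal isomorphism) sends $K_X$ to $K_W$ as linear equivalence classes—so that what is Cartier on the toric side genuinely corresponds to $K_X+D$ rather than to some unrelated divisor. This ultimately rests on the reflexive characterisation $\omega_X=j_{U,*}\omega_U$ on a normal variety, combined with the fact that smoothness is \'etale (and formal) local, so the smooth loci of $X$ and $W$ correspond and functoriality of $\omega$ under \'etale morphisms takes care of the rest. With this identification in place, the remaining verifications are routine bookkeeping.
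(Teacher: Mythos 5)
Your proposal is correct and matches the paper's proof in essence: both reduce each assertion to the local toric model $(W,C),w$ and transfer it back via the formal isomorphism or the common \'etale neighbourhood from Artin approximation, with normality and Cohen--Macaulayness descending through the completion (Bruns--Herzog, Matsumura) or equivalently through the \'etale cover, Cartier-ness of $K_X+D$ coming from Lemma \ref{l-formally-cartier}, and log canonicity from the \'etale-local nature of log discrepancies. The subtlety you flag about matching $K_X$ with $K_W$ under the formal/\'etale identification is real, and the reflexive-sheaf argument you propose is precisely how it is resolved (the paper absorbs this point implicitly via the normality hypothesis built into Lemma \ref{l-formally-cartier}).
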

\begin{proof}
Pick a closed point $x\in X$. Let $(W,C),w$ be a local toric model of $(X,D),x$. Since $W$ is toric and normal, it is Cohen-Macaulay.  
Thus $\widehat{\mathcal{O}}_{{W},w}$ is normal and Cohen-Macaulay, hence $\widehat{\mathcal{O}}_{{X},{x}}$ 
is normal and Cohen-Macaulay which implies $X$ is normal and Cohen-Macaulay at $x$, by [\ref{Bruns-Herzog}, Corollaries 2.1.8 and 2.2.23]. Alternative argument: ${\mathcal{O}}_{{X},x}$, ${\mathcal{O}}_{{W},w}$ are G-rings by [\ref{Matsumura}, Corollary on page 259], so by definition of G-rings, the homomorphisms ${\mathcal{O}}_{{X},x}\to \widehat{\mathcal{O}}_{{X},x}$ and ${\mathcal{O}}_{{W},w}\to \widehat{\mathcal{O}}_{{W},w}$ are regular; so by [\ref{Matsumura}, Theorem 32.2], 
${\mathcal{O}}_{{X},x}$ is normal (resp. regular, resp. Cohen-Macaulay, resp. reduced) iff $\widehat{\mathcal{O}}_{{X},x}$ is normal (resp. regular, resp. Cohen-Macaulay, resp. reduced) and a similar statement holds for ${\mathcal{O}}_{{W},w}$ and its completion.

Pulling back the canonical sheaf $\mathcal{O}_X(K_X)$ to $\Spec \widehat{\mathcal{O}}_{{X},{x}}$ gives the canonical sheaf of the latter [\ref{Bruns-Herzog}, Theorem 3.3.5]. In other words, $\widehat{K_X}$ is the canonical divisor of $\Spec \widehat{\mathcal{O}}_{{X},{x}}$ which is unique up to linear equivalence. Similarly, $\widehat{K_W}$ is the canonical divisor of $\Spec \widehat{\mathcal{O}}_{{W},{w}}$. Moreover, $(W,C)$ is toric, hence $K_W+C$ is Cartier near $w$. Thus using the given isomorphism $\widehat{\mathcal{O}}_{{X},{x}}\to \widehat{\mathcal{O}}_{{W},{w}}$ to identify the corresponding spaces, we deduce that $\widehat{K_X}+\widehat{D}\sim \widehat{K_W}+\widehat{C}$ is Cartier. Therefore, $K_X+D$ is Cartier near $x$, by Lemma \ref{l-formally-cartier}. 
Additionally, $(X,D)$ is lc at $x$ because $(W,C)$ is lc and because singularities are determined locally formally.
\end{proof}

We sketch an alternative approach to the second paragraph of the proof of the lemma. Applying [\ref{Artin}, Corollary 2.6], there is a common \'etale neighbourhood $U,u$ of $X,x$ and $W,w$. Assume that the inverse images of $D$ and $C$ to $U$ coincide near $u$ (this does not follow immediately from [\ref{Artin}, Corollary 2.6] but a modification of its proof should work; in this paper, when we apply the lemma, the condition on inverse images holds). Then one can see quickly that, near $x$, $K_X+D$ is Cartier and  
$(X,D)$ is an lc pair.

%%%%%%%%%%%%%%%%%%%%%%%
%%%%%%%%%%%%%%%%%%%%%%%
\section{\bf Toroidal and toric models of fibrations over curves}

We recall the main results of [\ref{B-toroid-toric-model-fib}]. The first result is bounded toroidalisation of fibrations over curves. Although its statement looks technical but it basically says that relatively bounded families over curves can be altered to relatively bounded and toroidal families.

\begin{thm}\label{t-bnd-torification}
Let $d,r\in \N$. Then there exists $r'\in \N$ depending only on $d,r$ satisfying the 
following. Assume that 
\begin{itemize}
\item $(X,D)$ is a couple of dimension $d$, 

\item $f\colon X\to Z$ is a projective morphism onto a smooth curve, 

\item $z\in Z$ is a closed point, and 

\item $A$ is a very ample$/Z$ divisor on $X$ such that $\deg_{A/Z}A\le r$ and $\deg_{A/Z}D\le r$.
\end{itemize}
Then, perhaps after shrinking $Z$ around $z$, there exists a commutative diagram 
$$
 \xymatrix{
 (X',D') \ar[d]^{f'} \ar[r]^{\pi} &    (X,D) \ar[d]^f \\
  (Z',E') \ar[r]^{\mu} & Z
  } 
$$  
of couples and a very ample$/Z'$ divisor $A'$ on $X'$ such that 
\begin{itemize}
\item  $(X',D')\to (Z',E')$ is a toroidal morphism which factors as a good tower 
$$
(X_d',D_d') \to \cdots \to (X_1',D_1')
$$
of families of split nodal curves,

\item  $\pi$ and $\mu$ are alterations,

\item 
$
\deg_{A'/Z'}A'\le r', \ \  \deg_{A'/Z'}D'\le r', \ \ \deg \pi\le r', \ \ \mbox{and} \ \ \deg \mu\le r',
$

\item the induced morphism 
$$
\pi|_{X'\setminus D'}\colon {X'\setminus D'}\to {X\setminus D}
$$ 
is quasi-finite,

\item $D'$ contains the fibre of $X'\to Z$ over $z$,

\item there is a Cartier divisor $G'\ge 0$ on $X'$ such that $A'-G'$ is ample$/Z'$ and $\Supp G'=D'$, and 

\item $A'-\pi^*A$ is ample over $Z'$.
\end{itemize}
\end{thm}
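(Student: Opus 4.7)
The overall strategy is to reduce to a universal-family situation and then perform a single bounded toroidalisation there, pulling back to our original fibration by base change. All uniformity will come from the fact that only finitely many universal families arise.

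First I would reduce to a finite list of universal families. Write $D=D^h+D^v$ for the horizontal and vertical parts of $D$ over $Z$, and replace $D$ by $D+f^{-1}(z)$; since $z$ is a single closed point of the smooth curve $Z$ and we are allowed to shrink $Z$ around $z$, this only increases $\deg_{A/Z} D$ by a bounded amount. The horizontal couple $(X/Z,D^h)$ lies in a relatively bounded family of couples over a curve, so by Lemma \ref{l-univ-family} there are finitely many couples $(V_i/T_i,C_i)$, depending only on $d$ and $r$, such that after shrinking $Z$ around $z$ we have $X=Z\times_{T_i}V_i$ and $D^h=Z\times_{T_i}C_i$ for some index $i$ and some morphism $Z\to T_i$. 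It suffices to construct, once and for all, a bounded toroidalisation of each universal couple $(V_i/T_i,C_i)$ and then base change by $Z\to T_i$.

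Fix one universal couple $(V/T,C)=(V_i/T_i,C_i)$. Using de Jong's theory of families of nodal curves [\ref{de-jong-nodal-family}][\ref{de-jong-smoothness-semi-stability}], I would construct alterations $V''\to V$ and $T''\to T$ and a commutative diagram in which $(V'',C'')\to(T'',B'')$ is a toroidal morphism factoring as a good tower of families of split nodal curves, with $C''$ containing the preimage of $C$. The tower is produced inductively in the relative dimension: starting from the top, one chooses a relative pencil, alters base and total space to reduce to a family of stable nodal curves of one smaller relative dimension, iterates, and makes one last base change to ensure that the nodes and branches of each nodal family are rational (splitness). Because $(V,T,C)$ is fixed, one can simultaneously fix a very ample$/T''$ divisor $A_{V''}$ on $V''$ and a Cartier divisor $G_{V''}\ge 0$ with $\Supp G_{V''}=C''$ and $A_{V''}-G_{V''}$ ample$/T''$; all relevant numerical data — the alteration degrees and the relative degrees $\deg_{A_{V''}/T''}A_{V''}$, $\deg_{A_{V''}/T''}C''$ — are bounded by constants depending only on $d,r$.

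Finally I would base change to $Z$. Take $Z'$ to be the component of $Z\times_T T''$ dominating $Z$ and $X'$ the corresponding reduced component of $Z\times_T V''$; let $D'$ (resp.\ $E'$) be the reduced preimage of $C''$ (resp.\ $B''$) augmented by the reduced fibre over $z$. Toroidality, the tower structure, and splitness of the nodal families are preserved by this base change via the local formal toric descriptions, since $Z\to T$ factors étale-locally through a smooth morphism. Setting $A':=\pi^*A_{V''}+N\pi^*A$ for a sufficiently large fixed $N$ and letting $G'$ be the pullback of $G_{V''}$ augmented by the reduced fibre over $z$, all the required bounds and ampleness properties ($A'-G'$ and $A'-\pi^*A$ ample over $Z'$; boundedness of $\deg_{A'/Z'}A'$, $\deg_{A'/Z'}D'$, $\deg\pi$, $\deg\mu$) hold by construction, and $\pi$ does not contract curves outside $D'$ since $V''\to V$ restricts to a generically finite morphism $V''\setminus C''\to V\setminus C$ of the same dimension.

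The main obstacle is the middle step: executing de Jong's construction to produce a genuine good tower of families of split nodal curves at the universal level, rather than a one-shot semistable reduction. The delicate points are making each stage of the tower simultaneously toroidal, split, and bounded in degree, and organising the induction on relative dimension so that the toroidal structure propagates. Once this universal toroidalisation is in hand for each of the finitely many $(V_i/T_i,C_i)$, everything else is base change and bookkeeping.
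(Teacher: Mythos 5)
Your overall strategy — reduce to finitely many universal families via Lemma \ref{l-univ-family}, apply a de Jong-style construction once and for all at the universal level to get a good tower of split nodal curves over a log smooth base, then pull back by base change — is the same as the paper's proof. The place where it falls short is the base change step, which you characterise as ``base change and bookkeeping''; in fact there are two genuine difficulties there that need a separate argument.

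First, the explanation that the tower structure and toroidality are preserved ``via the local formal toric descriptions, since $Z\to T$ factors étale-locally through a smooth morphism'' is not the right mechanism, and the factorisation claim is false as stated. The correct mechanism is: a good tower of families of split nodal curves is preserved by \emph{arbitrary} base change along a morphism from a couple whose image is not contained in the bottom boundary (Lemma \ref{l-nodal-curves-base-change} plus the discussion in \ref{ss-tower-family-nodal-curves}(3)), and toroidality of the pulled-back tower is then \emph{deduced} afresh from the tower structure via Proposition \ref{p-fam-nod-curves-over-toric}, using that the base $(Z',E')$ is a log smooth curve. Toroidality is not something one pulls back directly; it is re-derived at each level from the split-nodal-curve data.

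Second, and more seriously, you cannot perform the base change blindly. The alteration $V_1\to T$ (your $T''\to T$) is only generically finite étale: if the image of $Z\to T$ lands inside its branch or ramification locus, the component $Z'$ of $Z\times_{T}V_1$ need not be well-behaved and the log smoothness of $(Z',E')$ can fail. Similarly, if the image of $X\to V$ sits inside the non-étale locus of $V_d\to V$, the degree bound $\deg\pi\le r'$ cannot be extracted by comparison with $\deg(V_d\to V)$. The paper handles both by a Noetherian induction: the couples for which $Z\to T$ maps into the bad locus $S\subset T$, or for which condition $(*)$ in Step 3 of the proof fails, are set aside and treated by replacing $(V,T,C)$ with a family of strictly smaller dimension and restarting. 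Without that outer induction, your degree bounds, the log smoothness of the bottom couple, and hence the whole toroidality argument are not secured. Once this Noetherian induction is in place the rest of your outline (including the choice of $A'$, $G'$, and the verification that $\pi$ contracts no curves off $D'$) does go through essentially as you describe.
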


The theorem is proved in [\ref{B-toroid-toric-model-fib}] following the technique of families of nodal curves developed by de Jong [\ref{de-jong-smoothness-semi-stability}][\ref{de-jong-nodal-family}]. In this paper, we will use the fact that $(X',D')\to (Z',E')$ is a toroidal morphism but we will not use the fact that it factors as a good tower of families of split nodal curves. So we will not define families of nodal curves and good towers. For more details, see [\ref{B-toroid-toric-model-fib}].

The above theorem is used in [\ref{B-toroid-toric-model-fib}] to prove the following result which will be used to reduce problems in toroidal settings to toric settings.

\begin{thm}\label{p-local-desc-torif-bnd-fib-II}
Let $d,r$ be natural numbers. 
Assume  $(X,D)$ and $X\to Z$ satisfy the assumptions of Theorem \ref{t-bnd-torification} with the given $d,r$. 
Then we can choose 
$(X',D')$ and $X'\to Z'$ in the theorem so that if $x'\in X'$ is a closed point and $z'\in Z'$ is its image, then  
perhaps after shrinking $Z'$ around $z'$, we can find a commutative diagram of varieties and couples
$$
\xymatrix{
& {M}'\ar[ld] \ar[rd] & &N' \ar[ll]\ar[d]\\
(X',D') \ar[rd] & & ({Y}',{L}')\ar[ld] \ar@{-->}[r] & P'=\PP^{d-1}_{Z'}\ar[lld]\\
&(Z',E') &&
}
$$
where
\begin{enumerate}
\item all arrows are projective morphisms, except that $Y'\bir P'$ is a birational map,

\item $N'\to {M}'$ is birational and $N'\to P'$ is an alteration, 

\item $M'\to X'$ and $M'\to Y'$ are \'etale at some closed point $m'$ mapping to $x'$, 

\item the inverse images of $D'$ and $L'$ to $M'$ coincide near $m'$,

\item  if $G'$ is the sum of the coordinate hyperplanes of $\PP^{d-1}_{Z'}$ and the inverse image of $E'$, then the induced map $P'\setminus G'\bir Y'$ is an open immersion,

\item $(Y',L')$ is lc near $y'$, the image of $m'$, and any lc place of $(Y',L')$ with centre at $y'$ is an lc place of $(P',G')$, and

\item there is an ample$/Z'$ Cartier divisor $H'$ on ${Y}'$ such that 
$$
\vol_{/Z'}(A'|_{N'}+H'|_{N'}+G'|_{N'})\le r'
$$ 
where $A',r'$ are as in Theorem \ref{t-bnd-torification}.
\end{enumerate}
\end{thm}

It is worth noting that the semi-stable reduction method of [\ref{KKMB}] would not work to prove Theorem \ref{t-bnd-torification} because although it produces a toroidal model but it would be far from relatively bounded, which is crucial for us. However, more general (weak) semi-stable reductions have been developed in recent years, e.g. [\ref{Qu}] (which is applied in [\ref{BQ}]), relying on log geometry. It is likely that this can used to get another proof of Theorems \ref{t-bnd-torification} and \ref{p-local-desc-torif-bnd-fib-II} but it would not be straightforward and more work is required.

%%%%%%%%%%%%%%%%%%%%%%%%%%%
%%%%%%%%%%%%%%%%%%%%%%%%%%%

\section{\bf Strict transform of divisors}

In this section we introduce strict transform of divisors on a variety to another variety essentially via a correspondence. This generalises the notion of birational transform for birational maps. We will use it to reduce problems in the setting of toroidal morphisms to the setting of toric morphisms.

\subsection{Strict transform of reduced divisors}\label{ss-strict-transform} 
Let $X$ and $Y$ be varieties of the same dimension equipped with surjective projective morphisms
 $X\to Z$ and $Y\to Z$ onto a base variety $Z$. 
Assume we are given a variety $V$ fitting into a commutative diagram 
$$
\xymatrix{
&V\ar[ld]_\phi \ar[rd]^\psi&\\
X\ar[rd] && Y\ar[ld]\\
&Z&}
$$
where $\phi, \psi$ are alterations. 
Now let $D$ be a reduced divisor on $X$. We define the \emph{strict transform} $D_Y\subset Y$ of $D$ using the above diagram as follows. 
First, we define $\phi^{[-1]}D$ to be the union of those prime divisors on $V$ which map onto some irreducible component of $D$.
Then we define the strict transform $D_Y$ on $Y$ to be the codimension one part of $\psi(\phi^{[-1]}D)$
with reduced structure. 

\begin{lem}\label{l-strict-transform-well-defined}
Under the above notation and assumptions, if $\rho \colon U\to V$ is an alteration, then $\rho^{[-1]}(\phi^{[-1]}D)=(\phi\rho)^{[-1]}D$.
In particular, $D_Y$ is the same if defined using the induced 
diagram
$$
\xymatrix{
&U\ar[ld]_{\phi\rho} \ar[rd]^{\psi\rho}&\\
X\ar[rd] && Y\ar[ld]\\
&Z.&}
$$
\end{lem}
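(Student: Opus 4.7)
The plan is to prove (1) and (2) by direct manipulations of closures, using repeatedly that if $A\subseteq V$ is closed in an open subset $V_0\subseteq V$, then $\overline{A}\cap V_0=A$. For (1), given two opens $X_0,X_0'\subseteq X$ which both work for $\phi$ and $D$, I would first observe that the intersection $X_0\cap X_0'$ also works: it meets every irreducible component $D_i$ of $D$ since $X_0\cap D_i$ and $X_0'\cap D_i$ are two non-empty opens in the irreducible $D_i$; the map $\phi$ is finite over $X_0\cap X_0'$ by restriction; and $\phi^{-1}(D|_{X_0\cap X_0'})$ is pure of codimension one as an open subscheme of $\phi^{-1}(D|_{X_0})$. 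By symmetry it then suffices to compare the closures in $V$ of $\phi^{-1}(D|_{X_0})$ and of $\phi^{-1}(D|_{X_0\cap X_0'})$.

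The key point is that $\phi^{-1}(D|_{X_0\cap X_0'})$ is dense in every irreducible component of $\phi^{-1}(D|_{X_0})$. Indeed, if $E$ is such a component then $\phi(E)$ is irreducible and lies in some $D_i$; by pure-dimensionality and finiteness of $\phi$ over $X_0$, $\phi|_E$ is dominant onto $D_i\cap X_0$, so the preimage in $E$ of the non-empty open subset $D_i\cap X_0\cap X_0'$ is non-empty and hence dense in $E$. Taking closures in $V$ one recovers the same $\phi^{[-1]}D$; the divisor $D_Y$, being the codimension one reduced part of $\psi(\phi^{[-1]}D)$, is then also independent of $X_0$.

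For (2), the plan is to pick a single open $X_0\subseteq X$ that works simultaneously for both $\phi$ and $\phi\rho$ with respect to $D$ (obtained by intersecting separate admissible choices and appealing to (1)), and to set $V_0:=\phi^{-1}(X_0)\subseteq V$. I would then verify that $V_0$ works for $\rho$ and $\phi^{[-1]}D$: it meets every component of $\phi^{[-1]}D$ because each such component already meets the dense open $\phi^{-1}(D|_{X_0})\subseteq V_0$; the map $\rho$ is finite over $V_0$ because $\rho^{-1}(V_0)\to V_0$ is projective with finite fibres, the fibres being contained in those of the finite morphism $\phi\rho$ over $X_0$; and $(\phi^{[-1]}D)\cap V_0=\phi^{-1}(D|_{X_0})$ by the opening remark, so $\rho^{-1}((\phi^{[-1]}D)\cap V_0)=(\phi\rho)^{-1}(D|_{X_0})$ is pure of codimension one.

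Finally, with this choice of $V_0$ and $X_0$, the closure in $U$ of
$$
\rho^{-1}\bigl((\phi^{[-1]}D)\cap V_0\bigr)\;=\;(\phi\rho)^{-1}(D|_{X_0})
$$
is, by definition, both $\rho^{[-1]}(\phi^{[-1]}D)$ and $(\phi\rho)^{[-1]}D$, establishing their equality. The assertion about $D_Y$ then follows by applying $\psi$ to both sides, using that $\rho$ is surjective and proper so that $\rho(\rho^{[-1]}(\phi^{[-1]}D))=\phi^{[-1]}D$ (the image of a closure under a proper map is the closure of the image, and $\phi^{-1}(D|_{X_0})$ is dense in $\phi^{[-1]}D$). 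No step presents a serious obstacle; the only care required is in distinguishing closures in $V$ from their restrictions to the open $V_0$.
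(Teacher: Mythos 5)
Your proposal is correct and follows essentially the same route as the paper's proof: in (1) you reduce to comparing $X_0$ with a smaller open that also works, then use finiteness of $\phi$ over $X_0$ to show every component of $\phi^{-1}(D|_{X_0})$ dominates a component of $D$ and hence meets the preimage of the smaller open; in (2) you choose a common $X_0$, verify $V_0=\phi^{-1}(X_0)$ works for $\rho$ and $\phi^{[-1]}D$, identify $\rho^{-1}\bigl((\phi^{[-1]}D)\cap V_0\bigr)$ with $(\phi\rho)^{-1}(D|_{X_0})$, and push forward by $\psi$ using $\rho(\rho^{[-1]}(\phi^{[-1]}D))=\phi^{[-1]}D$. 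The only real difference is that you spell out a few of the small verifications (purity of codimension one for the restricted preimage, finiteness of $\rho$ over $V_0$ via projectivity plus quasi-finiteness, density of preimages inside components) that the paper leaves implicit; the underlying structure and the key lemmas invoked are identical.
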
 
\begin{proof}
Pick an irreducible component $S$ of $\rho^{[-1]}(\phi^{[-1]}D)$. Then $S$ maps onto an irreducible component $T$ of $\phi^{[-1]}D$. In turn, $T$ maps onto an irreducible component of $D$. Thus, $S$ is a component of $(\phi\rho)^{[-1]}D$. 
Conversely, assume that $S$ is an irreducible component of $(\phi\rho)^{[-1]}D$. Then it maps onto an irreducible component of $D$, hence the image of $S$ on $V$, say $T$, is a prime divisor mapping onto an irreducible component of $D$, so $T$ is a component of $\phi^{[-1]}D$. This in turn implies that $S$ is a component of  $\rho^{[-1]}(\phi^{[-1]}D)$. Therefore, $\rho^{[-1]}(\phi^{[-1]}D)=(\phi\rho)^{[-1]}D$. The last claim about $D_Y$ then follows from the fact that $\rho(\rho^{[-1]}(\phi^{[-1]}D))=\phi^{[-1]}D$.
\end{proof}

Although the lemma shows that certain choices of diagrams give the same strict transform $D_Y$ 
but in general two arbitrarily chosen diagrams lead to different strict transforms. For example, 
assume $V,X,Y$ are all $\PP^1$, $Z$ is a point, and $D$ is a point on $X$. 
If $\phi\colon V\to X$ is of degree $n$ and \'etale over $D$ and if $\psi \colon V\to Y$ is the identity morphism, 
then the strict transform $D_Y=\phi^{-1}(D)$ has degree $n$. Since $n$ can be arbitrary, the strict transform is not unique. 

If $X,Y$ are normal and $\phi,\psi$ are birational, then $D_Y$ is the birational transform of $D$.

%%%%%%%%%%%%
\subsection{Degree of strict transform of divisors}

Next we want to consider settings in which we can control degree of strict transform of divisors. 

\begin{lem}\label{l-strict-transform-bnd-degree}
Consider a diagram as in \ref{ss-strict-transform} and assume that  
\begin{itemize}
\item  $A,H$ are nef$/Z$ Cartier divisors on $X,Y$, respectively, and 

\item ${B}=\sum b_i {B}_i$ is an effective $\R$-divisor on ${X}$ with 
$|{A}-{B}|_{\R/Z}\neq \emptyset$. 
\end{itemize} 
Then 
$$
\deg_{H/Z} \sum b_i({B}_i)_{{Y}}\le \vol_{/Z}(A|_V+H|_V)
$$ 
where $(B_i)_{{Y}}$ is the strict transform of ${B}_i$.

\end{lem}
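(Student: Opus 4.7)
The plan is to compare everything on $V$, where $\phi$ and $\psi$ give us honest divisor-theoretic manipulations, and then conclude by a standard expansion of the top self-intersection.

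The key observation is a two-sided comparison. On $V$, I would first argue that $\phi^{[-1]} B_i \le \phi^* B_i$ as Weil divisors: if $X_0$ works for $\phi$ and $B_i$ and is contained in the smooth locus of $X$, then $\phi^* B_i$ is Cartier on $V_0 = \phi^{-1}X_0$, effective, and has support equal to $\phi^{-1}(B_i|_{X_0})$; every coefficient is therefore at least one, which is the (reduced) coefficient in $\phi^{[-1]} B_i$ on $V_0$, and taking closures preserves this inequality. Summing, $\phi^{[-1]}B := \sum b_i \phi^{[-1]}B_i \le \phi^* B$. On the $Y$ side, I would check that $\psi_* \phi^{[-1]} B \ge \sum b_i (B_i)_Y$ as divisors: the support of $\psi_*\phi^{[-1]}B_i$ is exactly $(B_i)_Y$ (by definition of strict transform), and each coefficient of $\psi_*\phi^{[-1]}B_i$ equals $\sum_j \deg(\psi|_{C_j}) \ge 1$, summed over components $C_j$ of $\phi^{[-1]}B_i$ dominating a given component of $(B_i)_Y$.

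Next I would turn the volume bound into an intersection computation on a general fibre. Let $F_Y$ be a general fibre of $Y \to Z$, $F_V$ the corresponding general fibre of $V \to Z$ (which equals $\psi^{-1}F_Y$ up to generic choice), and $n = \dim F_Y = \dim F_V$. Since $H$ is nef$/Z$, the above inequality on $Y$ combined with the projection formula gives
\begin{align*}
\deg_{H/Z}\textstyle\sum b_i (B_i)_Y
&\le \deg_{H/Z}\psi_*\phi^{[-1]}B
= \phi^{[-1]}B \cdot (\psi^* H)^{n-1}\big|_{F_V}.
\end{align*}
Applying $\phi^{[-1]}B \le \phi^*B$ and nefness of $\psi^*H = H|_V$ (preserving intersections), this is at most $\phi^*B \cdot (H|_V)^{n-1}|_{F_V}$. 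The hypothesis $|A-B|_{\R/Z} \neq \emptyset$ means $\phi^*A - \phi^*B \sim_\R \phi^*D' + \phi^*f^*N$ with $D' \ge 0$ and $N$ some $\R$-Cartier divisor on $Z$; restricting to the general fibre $F_V$, the pullback from $Z$ becomes numerically trivial, so $\phi^*B|_{F_V}$ is numerically bounded by $\phi^*A|_{F_V} = A|_V|_{F_V}$. Hence
$$
\deg_{H/Z}\textstyle\sum b_i (B_i)_Y \;\le\; A|_V \cdot (H|_V)^{n-1}\big|_{F_V}.
$$

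The final step is to absorb this intersection into the volume. Since $A|_V$ and $H|_V$ are nef$/Z$, so is their sum, and on the general fibre $F_V$ we have
$$
\vol_{/Z}(A|_V+H|_V) = \bigl(A|_V+H|_V\bigr)^n\big|_{F_V}
= \sum_{k=0}^{n} \binom{n}{k}(A|_V)^k (H|_V)^{n-k}\big|_{F_V},
$$
and all summands are nonnegative by nefness. Keeping only the $k=1$ summand yields $\vol_{/Z}(A|_V+H|_V) \ge n\, A|_V \cdot (H|_V)^{n-1}|_{F_V} \ge A|_V\cdot (H|_V)^{n-1}|_{F_V}$ (the case $n=0$ is trivial since then there are no horizontal divisors). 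Chaining the inequalities gives the claim. The only non-automatic step is the divisor-theoretic comparison (ii), where one must be careful that $\phi^{[-1]}B_i$ may acquire extra components in closing up outside $V_0$, but those components land in codimension $\ge 2$ on $X$, so they cannot affect $(B_i)_Y$ after push-forward; checking this cleanly is the one place requiring attention.
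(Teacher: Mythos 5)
Your argument follows the same route as the paper's proof: compare $\sum b_i(B_i)_Y$ above to $\psi_*\phi^*B$ (via $\phi^{[-1]}B\le\phi^*B$ on the smooth locus and $\sum b_i(B_i)_Y\le\psi_*\phi^{[-1]}B$), pass via the projection formula to an intersection number on the general fibre of $V\to Z$, switch from $B$ to $A$ using $|A-B|_{\R/Z}\neq\emptyset$, and absorb into the volume using nefness of $A,H$. The one technical wrinkle is that $B$ is only assumed to be an effective $\R$-divisor, not $\R$-Cartier, so $\phi^*B$ is not a priori well-defined and the displayed relation $\phi^*A-\phi^*B\sim_\R\phi^*D'+\phi^*f^*N$ you invoke does not literally make sense as stated; the paper addresses this at the outset by writing $A\sim_\R B+G/Z$ with $G\ge 0$ and replacing $B$ by $B+G$, which makes $B$ $\R$-Cartier while only enlarging the left-hand side of the inequality to be proven, after which your intersection-theoretic steps go through unchanged.
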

\begin{proof}
Since $|{A}-{B}|_{\R/Z}\neq \emptyset$, we can write $A\sim_\R B+G/Z$ for some $G\ge 0$. 
Replacing $B$ with $B+G$, we can assume $A\sim_\R B/Z$ which in particular means we can assume $B$ is 
$\R$-Cartier. On the other hand, we can assume that $V\to X$ factors through the normalisation of $X$, and then we can replace $X$ with its normalisation and replace $A,B$ with their pullbacks. 

Now $\sum b_i \phi^{[-1]}B_i\le \phi^*B$ which can be checked locally over each irreducible component of $B$ keeping in mind that $X$ is smooth near the generic point of the $B_i$. On the other hand, the codimension one part of $\psi(\phi^{[-1]}B_i)$ is less than or equal to $\psi_*\phi^{[-1]}B_i$ where $\psi_*$ stands for cycle-theoretic direct image (cf. [\ref{Fulton}, Section 1.4] for definition of direct image). Then
$$
\sum b_i({B}_i)_{{Y}}\le \sum b_i \psi_*\phi^{[-1]}B_i=\psi_*\sum b_i \phi^{[-1]}B_i \le \psi_*\phi^*B.
$$ 
 Thus it is enough to show that
$$
\deg_{H/Z}\psi_*\phi^*B\le \vol_{/Z}(A|_V+H|_V). 
$$ 
This is equivalent to showing 
$$
\deg_{H/Z}\psi_*\phi^*A\le \vol_{/Z}(A|_V+H|_V)
$$
because $\psi_*\phi^*B\sim_\R\psi_*\phi^*A$ over the generic point of $Z$ (this $\R$-equivalence can be derived from [\ref{Fulton}, Theorem 1.4]). 

Let $z\in Z$ be a general closed point and let $S,T$ be the fibres of $V\to Z$ and $Y\to Z$ over $z$. 
Let $d=\dim S=\dim T$ and $c=\dim Z$. Perhaps after shrinking $Z$ around $z$, we can find effective Cartier divisors $L_1,\dots,L_c$ on $Y$ such that $T=L_1\cap \cdots \cap L_c$ where the intersections are scheme-theoretic.
Then 
$$
T\cdot H^{d-1}\cdot \psi_*\phi^*A\sim L_1\cdot \cdots \cdot L_c \cdot H^{d-1} \cdot \psi_*\phi^*A
$$
$$
\sim \psi_*(\psi^*L_1\cdot \cdots \cdot \psi^*L_c \cdot \psi^*H^{d-1}\cdot  \phi^*A)\sim 
\psi_*(S\cdot \psi^*H^{d-1}\cdot  \phi^*A)
$$ 
by applying the cycle-theoretic projection formula [\ref{Fulton}, Proposition 2.5(c)] repeatedly, where $\sim$ denotes rational equivalence of zero-cycles.
Therefore, we have  
$$
\deg_{H/Z}\psi_*\phi^*A=\deg(H|_T^{d-1} \cdot (\psi_*\phi^*A)|_T)
=\deg(T\cdot H^{d-1}\cdot \psi_*\phi^*A)
$$
$$
=\deg(S\cdot \psi^*H^{d-1}\cdot  \phi^*A)=\deg(\psi^*H|_S^{d-1}\cdot \phi^*A|_S)
$$
$$
\le  \vol(\psi^*H|_S+\phi^* A|_S)=\vol_{/Z}(H|_V+A|_V).
$$ 
To get the third equality, we are using 
the fact that we are working over an algebraically closed field so degree of a zero-cycle and degree of its direct image are the same. To get the inequality, we are using the assumption that $A,H$ are nef over $Z$.
\end{proof}

%%%%%%%%%%%
\subsection{Log discrepancies after strict transform}\label{ss-log-disc-strict-transform}
We will define a kind of strict transform of boundary divisors under suitable conditions, and then 
compare log discrepancies.  First, we fix some notation. Let $(W,B_W)$ 
be a pair and let $\pi\colon W'\to W$ be an alteration from a normal variety. Then 
we can write 
$$
K_{W'}+B_{W'}=\pi^*(K_W+B_W)
$$ 
where $B_{W'}$ is uniquely determined by choosing $K_{W'}$ to be compatible with $K_W$. This follows from the Riemann-Hurwitz formula if $\pi$ is finite. It is obvious when $\pi$ is birational. The general case is reduced to these two cases. We will refer to  
$B_{W'}$ as \emph{the sub-boundary given by crepant pullback of $K_W+B_W$}. 

Let $D$ be a prime divisor over $W$ and $D'$ be a prime divisor over $W'$ mapping to $D$. If the ramification index along $D'$ is $r$ (the ramification index can be computed on appropriate resolutions of $W',W$), then the log discrepancies satisfy 
$$
a(D',W',B_{W'})=ra(D,W,B_W)
$$
(cf. [\ref{kollar-mori}, proof of Proposition 5.20] or [\ref{Shokurov-log-flips}]).
In particular, if $D$ is a component of $B_W$ and if $\mu_DB_W\ge \frac{e-1}{e}$, where $e=\deg \pi$, then one can check that $\mu_{D'}B_{W'}\ge 0$ as $r\le e$. On the other hand, if $(W,B_W)$ is $\epsilon$-lc, then $(W',B_{W'})$ is sub-$\epsilon$-lc.

\begin{prop}\label{p-disc-after-str-transform}
Assume that 
\begin{itemize}
\item we have a commutative diagram of normal varieties
$$
\xymatrix{
&{U}\ar[ld]_{{\phi}} \ar[rd]^{{\psi}}&\\
X\ar[rd]_f && Y\ar[ld]^g\\
&Z&}
$$ 
where $f,g,\phi,\psi$ are surjective projective morphisms, $Z$ is a curve,

\item $\phi,\psi$ are \'etale at a closed point $u\in U$ mapping to $x\in X$ and $y\in Y$, 

\item $(W,B_W)$ is an $\epsilon$-lc pair ($\epsilon>0$) and $W\to X$ is a birational contraction, and 

\item $K_W+B_W$ is nef$/X$.
\end{itemize}
Then there exist an $\epsilon$-lc pair $(V,C_{V})$ and a birational contraction $V\to Y$ such that 
\begin{itemize}

\item if $B=\sum b_iB_i$ is the pushdown of $B_W$ to $X$ and $C_Y$ is the pushdown of $C_V$ 
to $Y$, then 
$$
C_Y=(\sum b_i (B_i)_{Y})^{\le 1-\epsilon}
$$ 
where the strict transforms are defined with respect to the diagram above, 

\item $K_{V}+C_{V}$ is nef$/Y$, and 

\item if $T$ is any prime divisor over $X$ whose centre passes through $x$ 
and if $S$ is a corresponding prime divisor over $Y$ (induced by the diagram above), then 
$$
a(T,W,B_W)\ge a(S,V,C_{V}).
$$
\end{itemize}
\end{prop}
\begin{proof}
\emph{Step 1.}
Let 
$$
C_Y:=(\sum b_i (B_i)_{Y})^{\le 1-\epsilon}
$$
where the notation $()^{\le 1-\epsilon}$ is as in \ref{ss-divs-deg}.
Let $Y'\to Y$ be a log resolution of $(Y,C_Y)$. Let 
$$
C_{Y'}=C_Y^\sim+(1-\epsilon)E
$$ 
where $C_Y^\sim$ is the birational transform of $C_Y$ and 
$E$ is the reduced exceptional divisor of $Y'\to Y$. Then $(Y',C_{Y'})$ is $\epsilon$-lc. Run an MMP$/Y$ on $K_{Y'}+C_{Y'}$ with scaling of some ample divisor. This terminates as $K_{Y'}+C_{Y'}$ is big over $Y$, by [\ref{BCHM}]. Let $V$ be the minimal model obtained. By construction, $(V,C_{V})$ is $\epsilon$-lc 
and $K_{V}+C_{V}$ is nef$/Y$.

\emph{Step 2.}
By assumption, $\phi,\psi$ are \'etale on some open neighbourhood $U'$ of $u$.
Let $\overline{W}$ be the normalisation of the main component of $W\times_X{U}$. 
Similarly define $\overline{V}$ to be the  
normalisation of the main component of $V\times_Y{U}$. 
Let ${W}'$ be the inverse image of 
${U'}$ under $\overline{W}\to {U}$ and similarly let ${V}'$ be the inverse image of 
${U'}$ under $\overline{V}\to {U}$. 
Since ${U}'\to X$ and ${U}'\to Y$ are \'etale, ${W}'\to W$ and 
${V}'\to V$ are also \'etale. Since $\overline{W}\to {U}$ and $\overline{V}\to {U}$ are both birational,  
we can take a common resolution $P\to \overline{W}$ and $P\to \overline{V}$. 
Let ${P'}$ be the inverse image of ${U'}$ under $P\to {U}$. 

Let $B_P$ and $B_{\overline{W}}$ be the sub-boundaries given 
by crepant pullback of $K_W+B_W$ to $P$ and $\overline{W}$. Similarly, 
let $C_P$ and $C_{\overline{V}}$ be the sub-boundaries given 
by crepant pullback of  $K_{V}+C_{V}$ to $P$ and $\overline{V}$.
Since $\overline{W}\to W$ is \'etale on ${W}'$, we deduce that 
$B_{\overline{W}}|_{W'}$ is effective and its coefficients are 
among the coefficients of $B_W$. Similarly, 
$C_{\overline{V}}|_{{V}'}$ is effective and its coefficients are 
among the coefficients of $C_{V}$.

\emph{Step 3.}
We claim that $B_P\le C_P$  holds on ${P'}$. 
Since $K_W+B_W$ is nef$/X$, $K_P+B_P$ is nef$/\overline{V}$, hence from 
$$
C_P-B_P=K_P+C_P-(K_P+B_P)\equiv -(K_P+B_P)/\overline{V},
$$
we see that $C_P-B_P$ is anti-nef$/\overline{V}$. Thus by the negativity lemma, it is enough to show that 
$\pi_*C_P-\pi_*B_P\ge 0$ holds on $V'$ 
where $\pi$ denotes $P\to \overline{V}$. 

Let $Q$ be a prime divisor on $\overline{V}$ intersecting $V'$. We want to show that 
$\mu_Q\pi_*C_P\ge \mu_Q\pi_*B_P$. Let $R$ be the birational transform of $Q$ on $P$. Equivalently, we want to show that 
$\mu_RC_P\ge \mu_RB_P$.
Since $V'\to V$ is \'etale, we see that  
$Q$ (and so $R$) maps to a divisor $D$ on $V$. 
First assume $Q$ is exceptional over ${U}$. Then $D$ is exceptional over $Y$, hence 
by construction, $\mu_DC_V=1-\epsilon$ which in turn 
implies $\mu_RC_P=\mu_QC_{\overline{V}}=1-\epsilon$ as $V'\to V$ is \'etale. On the other hand, 
since $(W,B_W)$ is $\epsilon$-lc, $(P,B_P)$ is sub-$\epsilon$-lc, so $\mu_RB_P\le 1-\epsilon$. Thus $\mu_RC_P\ge \mu_RB_P$.

Now assume $Q$ is not exceptional over ${U}$. Then $Q$ (hence $R$) is not exceptional over either $X,Y$ 
because the image of $Q$ on $U$ intersects ${U'}$ and ${U}'\to X$ and 
${U'}\to Y$ are both \'etale. This implies that $P\to W$ and $P\to V$ are both \'etale 
at the generic point of $R$. Thus if $M,N$ are the images of $R$ on $X,Y$, respectively, then 
we have 
$$
\mu_RB_P=\mu_MB~~~ \mbox{and}~~~ \mu_RC_P=\mu_{N}C_Y.
$$ 
Thus if $M$ is not a component of $B$, 
then $\mu_RC_P\ge \mu_RB_P$ holds trivially. 

Assume then that $M$ is a component of $B$, say 
with coefficient $b>0$. Then $bM_Y\le C_Y$ by definition of $C_Y$ and because $b\le 1-\epsilon$. 
Let $L$ be the image of $R$ on $U$. Then $L$ is a component of $\phi^{[-1]}M$ as $L$ maps onto $M$, hence $N$ is a component of $M_Y$ as $L$ maps onto $N$.
Therefore, $N$ is a component of $C_Y$ 
with coefficient at least $b$. This implies that 	
$$
\mu_RB_P=\mu_MB=b\le \mu_{N}C_Y=\mu_RC_P.
$$
Therefore, we have shown that 
$\pi_* C_P\ge \pi_*B_P$ holds on $V'$ as required.

\emph{Step 4.}
Finally, let $T$ be a prime divisor over $X$ with centre passing through $x$. 
Let $\tilde{W}\to W$ and $\tilde V\to V$ be resolutions and assume $T$ is a divisor on $\tilde{W}$. 
Replacing $P$, we can assume both induced maps $P\bir \tilde{W}$ and $P\bir \tilde{V}$ are morphisms.
Since $U\to X$ is \'etale at $u$ and since $u$ maps to $x$, there is a prime divisor $R$ on $P$ 
whose centre on $U$ passes through $u$ and $R$ maps onto $T$. Replacing $P$ and $\tilde{V}$, 
we can assume $R$ maps onto a divisor $S$ on $\tilde{V}$ whose centre on $Y$ passes through $y$, by Lemma \ref{l-desced-prime-div}.

Moreover, 
$P\to \tilde{W}$ factors through $\tilde{W}\times_XU\to \tilde{W}$ which is \'etale on the open subset 
$\tilde{W}\times_XU'$, and $R$ is mapped to a divisor on $\tilde{W}\times_XU$ intersecting 
$\tilde{W}\times_XU'$. Thus $P\to \tilde{W}$ is \'etale near the generic 
point of $R$ and $P'$ intersects $R$. Similarly, $P\to \tilde{V}$ is \'etale near the generic 
point of $R$. In particular, 
$$
a(T,W,B_W)=a(T,\tilde{W},B_{\tilde{W}})=a(R,P,B_P)=1-\mu_RB_P
$$
and 
$$
a(S,V,C_{V})=a(S,\tilde{V},C_{\tilde{V}})=a(R,P,C_P)=1-\mu_RC_P
$$
where $B_{\tilde{W}}$ and $C_{\tilde{V}}$ are the sub-boundaries determined by the pullbacks of 
$K_W+B_W$ and $K_{V}+C_V$ to $\tilde{W}$ and $\tilde{V}$, respectively.

Now since $R$ intersects $P'$, by Step 3 we have 
$$
\mu_RB_P \le \mu_RC_P
$$
proving the claim 
$$
a(T,W,B_W)=1-\mu_RB_P\ge 1-\mu_RC_P=a(S,V,C_{V}).
$$
\end{proof}

%%%%%%%%%%%%%%%%%%%%
%%%%%%%%%%%%%%%%%%%%

\section{\bf Multiplicities along lc places}

In this section we study multiplicities of fibres along lc places in a quite general setting. 
We work towards proving Theorem \ref{t-bnd-mult-lc-places-fib-main}. 
First, we will reduce the theorem to the case when $(X,\Lambda)$ is toroidal. Next, we will reduce to the case when $X=\PP^{d-1}_Z$ and $\Lambda$ is the sum of the coordinate hyperplanes and the fibre over $z$.  In the next section we will further reduce the theorem to a toric problem, then solve the problem, hence complete the proof of the theorem.

%%%%%%%%
\subsection{Reduction to the toroidal case}
We start with using Theorem \ref{t-bnd-torification} to make $(X,\Lambda)$ toroidal over $Z$.

\begin{prop}\label{p-reduction-to-toroidal}
Assume Theorem \ref{t-bnd-mult-lc-places-fib-main} holds when $(X,\Lambda)\to (Z,E)$ is toroidal where $E=z$. Then the theorem holds in general. 
\end{prop}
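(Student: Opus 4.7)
The plan is to apply Theorem \ref{t-bnd-torification} and transfer the entire setup to its bounded toroidal alteration. Since $r\Lambda$ is integral and $A-\Lambda$ is pseudo-effective$/Z$, one has $\deg_{A/Z}\Lambda_{\red}\le r^2$, so I would apply \ref{t-bnd-torification} with input $(X,\Lambda_{\red})\to Z$ and point $z$. Shrinking $Z$ around $z$ produces a commutative diagram
$$
\xymatrix{
(X',D')\ar[d]_{f'}\ar[r]^-{\pi}&(X,\Lambda_{\red})\ar[d]^f\\
(Z',E')\ar[r]^-{\mu}&Z
}
$$
with $\pi,\mu$ alterations of degree $\le r'$, a very ample$/Z'$ divisor $A'$ satisfying $\deg_{A'/Z'}A',\deg_{A'/Z'}D'\le r'$, a Cartier $G'\ge 0$ with $\Supp G'=D'$ and $A'-G'$ ample$/Z'$, and $A'-\pi^*A$ ample$/Z'$, where $r'=r'(d,r)$. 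Setting $\Lambda':=D'$, Lemma \ref{l-toroidal-couple-lc} gives $(X',\Lambda')$ lc with $r'\Lambda'$ integral, and $A'-\Lambda'\ge A'-G'$ is pseudo-effective$/Z'$; moreover, every $\pi$-exceptional divisor lies in $D'$ because $\pi|_{X'\setminus D'}$ contracts no curve.

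Next I would lift $T$ to a prime divisor $T'$ over $X'$. Take a log resolution $Y\to X$ on which $T$ is a divisor, base-change to $X'$, normalize the main component and resolve to get $\tilde Y$ birational over $X'$ and an alteration of $Y$. A component of the preimage of $T$ in $\tilde Y$ is a prime divisor $T'$ over $X'$ with ramification index $r_T\ge 1$ along $T$, lying over some closed point $z'\in\mu^{-1}\{z\}$; replacing $E'$ by $z'$, the toroidal case applies to $(X',\Lambda')\to(Z',z')$. From the crepant pullback $\pi^*(K_X+\Lambda)=K_{X'}+\Lambda^\pi$ and Riemann--Hurwitz, $\Lambda^\pi$ has coefficients $\le 1$ with support in $\pi^{-1}\Supp\Lambda\cup\operatorname{Exc}(\pi)\subseteq D'=\Lambda'$, so $\Lambda^\pi\le\Lambda'$ and $a(T',X',\Lambda')=a(T',X',\Lambda^\pi)=r_T\cdot a(T,X,\Lambda)=0$. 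For $(W',B_{W'})$, let $V$ be the normalization of the main component of $W\times_XX'$, so $V\to X'$ is birational and $\rho\colon V\to W$ is an alteration; then $K_V+B_V:=\rho^*(K_W+B_W)$ is sub-$\epsilon$-lc and nef$/Z'$. I would pass to a log resolution $\tilde V$ of $(V,B_V)$ on which $T'$ is a divisor, truncate the negative part of $B_{\tilde V}$ to zero, add $(1-\epsilon)$ times the divisor exceptional over $X'$, and run an MMP over $Z'$ scaled so that $T'$ is not contracted. This should produce an $\epsilon$-lc pair $(W',B_{W'})$ with $W'\bir X'$ birational, $K_{W'}+B_{W'}$ nef$/Z'$, and $a(T',W',B_{W'})\le 1$; the last inequality because on $\tilde V$ one computes $a(T',V,B_V^{\ge 0})\le 1$ directly from $a(T',V,B_V)=r_T\cdot a(T,W,B_W)$ combined with truncation.

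Finally, applying the assumed toroidal case with parameters $d,r',\epsilon$ yields a bound $\mu_{T'}F'\le l'$ where $F'=f'^*z'$. The relation $\pi^*f^*z=f'^*\mu^*z$ together with $\mu^*z=e_{z'}z'+\cdots$ and the fact that $T'$ lies over $z'$ give $\mu_{T'}\pi^*F=e_{z'}\mu_{T'}F'$, while $\mu_{T'}\pi^*F=r_T\mu_TF$ by the definition of ramification, so $r_T\mu_TF=e_{z'}\mu_{T'}F'\le r'l'$ and $\mu_TF\le r'l'=:l$, depending only on $d,r,\epsilon$. The main technical obstacle will be the construction of $(W',B_{W'})$: ensuring that the MMP over $Z'$ exists and terminates (which should follow from $K_V+B_V$ already being nef$/Z'$, so only the truncated/added boundary drives the MMP and these extra components are exceptional over $X'$), that $T'$ remains a divisor on the final model, and that the pushdown $B'$ of $B_{W'}$ on $X'$ satisfies $A'-B'$ pseudo-effective$/Z'$. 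The last condition I expect to follow from $A-B$ pseudo-effective$/Z$ and $A'-\pi^*A$ ample$/Z'$ via a degree comparison on $V$ using Lemma \ref{l-strict-transform-bnd-degree}, possibly combined with a local use of Proposition \ref{p-disc-after-str-transform} to realize $B'$ as a controlled strict-transform-type construction.
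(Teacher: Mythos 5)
Your overall plan mirrors the paper's: apply Theorem \ref{t-bnd-torification} to $(X,\Supp \Lambda)$, pull everything back through the alteration, set $\Lambda' = D'$, lift $T$ to $T'$, form the crepant pullback $B_V$ on the normalized main component $V$ of $W\times_X X'$, truncate and run an MMP, and finally compare multiplicities of fibres. Several of your intermediate assertions (e.g., $\pi$-exceptional divisors lie in $D'$, $\Lambda^\pi \le \Lambda'$, hence $a(T',X',\Lambda')=0$) are correct and match the paper.

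However, there is a genuine gap in the construction of $(W',B_{W'})$. You take the crepant pullback $K_V+B_V=\rho^*(K_W+B_W)$ directly, without first modifying $B_W$. The coefficient of $T'$ in $B_V$ is $1-r_T(1-\mu_T B_W)$ by Riemann--Hurwitz, and since $\mu_T B_W$ can be as small as $0$ while $r_T$ can be as large as $r'$, this coefficient is typically negative. When you truncate the negative part and run the MMP, $T'$ lies in the negative locus $N_\sigma$, so it can be contracted; and if $T'$ is contracted, MMP steps only \emph{increase} its log discrepancy, so the crucial hypothesis $a(T',W',B_{W'})\le 1$ of the assumed toroidal case can fail. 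Your remark that ``only the truncated/added boundary drives the MMP and these extra components are exceptional over $X'$'' does not save the situation, because $T'$ itself is generally exceptional over $X'$ and generally has negative coefficient. Your computation of $a(T',V,B_V^{\ge 0})\le 1$ on the resolution is correct, but it does not survive the MMP if $T'$ is contracted.

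The missing step is precisely what the paper does in Step 3 of its proof: before pulling back, replace $B_W$ by the convex combination $\Delta_W=\alpha B_W+(1-\alpha)\Lambda_W$, where $\Lambda_W$ is the crepant pullback of $K_X+\Lambda$ (so $\mu_T\Lambda_W=1$) and $\alpha$ is chosen so that $\mu_T\Delta_W=\frac{r'-1}{r'}$. Since $\alpha\ge\frac{1}{r'}$ is bounded below, the pair remains $\alpha\epsilon$-lc with $\alpha\epsilon$ bounded below. After this, the coefficient of $T'$ in the crepant pullback is nonnegative, so $T'$ is not in the negative locus, the MMP cannot contract it, and the equality $a(T',W',B_{W'})=a(T',U',\Theta_{U'})\le 1$ goes through. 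Without this interpolation against $\Lambda_W$, the argument does not close. The other technical issues you flag (degree bound on the pushdown of $B_{W'}$, persistence of $T'$ on the final model, the final multiplicity comparison) are handled in the paper once this step is in place, and your outlined estimates for them are essentially right.
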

\begin{proof}
\emph{Step 1.}
Since $a(T,W,B_W)\le 1$, we may extract $T$, hence assume $T$ is a prime divisor on $W$.
Replacing $A$ with $rA$ (and replacing $r$ with $r^{d}$), we can assume $A-r\Lambda$ is pseudo-effective over $Z$, so 
$$
\deg_{A/Z}r\Lambda\le \deg_{A/Z}A\le r.
$$
Now applying Theorem \ref{t-bnd-torification} to $(X,D:=\Supp r\Lambda),A, X\to Z$, and $z\in Z$, we get a commutative diagram 
$$
 \xymatrix{
 (X',D') \ar[d] \ar[r]^{\pi} &    (X,D) \ar[d] \\
  (Z',E') \ar[r] & Z
  } 
$$  
satisfying the properties listed in that theorem, perhaps after shrinking $Z$ around $z$. We will use the notation 
of that theorem. In particular, $(X',D')\to (Z',E')$ is toroidal and $\deg \pi\le r'$ where we can assume $r'\ge 2$.

\emph{Step 2.}
We will modify $(W,B_W)$ so that  the coefficient of $T$ in $B_W$, say $b$, is at least $\frac{r'-1}{r'}$. 
Assume $b<\frac{r'-1}{r'}$. Let $K_{W}+\Lambda_{W}$ be the pullback of $K_{X}+\Lambda$. 
 The coefficient of $T$ in $\Lambda_{W}$ is $1$ because 
 $a(T,X,\Lambda)=0.$
Consider 
$$
\Delta_W=\alpha B_W+(1-\alpha)\Lambda_W
$$ 
where $\alpha\in (0,1)$ is the number so that 
the coefficient of $T$ in $\Delta_W$ is $\frac{r'-1}{r'}$. More precisely, $\alpha=\frac{1}{r'(1-b)}$ which is 
bounded from below by $\frac{1}{r'}$. 
Moreover, $(W,\Delta_W)$ is sub-$\alpha \epsilon$-lc. 

In general, $\Delta_W$ may have negative coefficients but we will remedy this as follows. 
Take a log resolution $V\to W$, let $K_V+\Delta_V$ be the 
pullback of $K_{W}+\Delta_{W}$, and write $\Delta_V=\Omega_V-J_V$ where $\Omega_V,J_V\ge 0$ have no common component (so $\Omega_V$ is the effective part of $\Delta_V$).
Since $(W,\Delta_W)$ is sub-$\alpha \epsilon$-lc, the coefficients of $\Omega_V$ do not exceed $1-\alpha\epsilon$, 
hence  $(V,\Omega_V)$  is $\alpha\epsilon$-lc.
Let  $(V',\Omega_{V'})$ be a minimal model of $(V,\Omega_V)$ over $X$. 
Note that since $K_V+\Omega_V=K_V+\Delta_V+J_V$ and since $K_V+\Delta_V$ is nef$/X$, any divisor contracted by $V\bir V'$ is a component of $J_V$. In particular, 
since $T$ has positive coefficient in $\Delta_W$, its birational transform is not a component of $J_V$, hence not contracted over $V'$.  Then 
$(V',\Omega_{V'})$ is $\alpha\epsilon$-lc and 
$$
a(T,V',\Omega_{V'})=a(T,V,\Omega_{V})=a(T,W,\Delta_{W})=\frac{1}{r'} \le 1.
$$ 
Also the pushdown of $\Omega_{V'}$ to $X$ is $\alpha B+(1-\alpha)\Lambda$.

Now replace $\epsilon$ with $\alpha\epsilon$ and   
$(W,B_{W})$ with $(V',\Omega_{V'})$ so that we can assume $\mu_TB_W\ge \frac{r'-1}{r'}$. Note that the condition of $A-B$ being pseudo-effective over $Z$ is preserved. 

\emph{Step 3.}
Let $W'$ be the normalisation of the main component of $X'\times_XW$. 
Let $T'$ be a prime divisor on $W'$ mapping onto $T$. Since $W\to X$ and $W'\to X'$ 
are birational, 
$$
\deg(W'\to W)=\deg \pi\le r'.
$$ 
Then  the ramification index of $W'\to W$ along $T'$ is at most $r'$. 
Thus if $K_{W'}+B_{W'}$ is the pullback of $K_W+B_W$, then the coefficient of $T'$ in $B_{W'}$ is 
non-negative because the coefficient of $T$ in $B_W$ is at least $\frac{r'-1}{r'}$ (see \ref{ss-log-disc-strict-transform}). 

In general, $B_{W'}$ is not effective. Let $\Theta_{W'}$ be the effective part of $B_{W'}$. 
The idea is to construct a suitable model over $X'$ using $\Theta_{W'}$ but first we need to establish 
some properties of this divisor.
Let $\Theta'$ be the pushdown of $\Theta_{W'}$ to $X'$ and write $\Theta'=C_1'+C_2'$ where $C_2'$ 
is the exceptional$/X$ part of $\Theta'$ and $C_1'$ is the non-exceptional part. 
Since $D'$ is 
reduced and $\deg_{A'/Z'} D'\le r'$, we see that $|pA'-D'|_{\R/Z'}\neq \emptyset$ for some fixed natural number $p$ depending only on $d,r'$. Now 
$C_2'\le D'$ as $X'\setminus D'\to X\setminus D$ is quasi-finite by Theorem \ref{t-bnd-torification}, hence
$|pA'-C_2'|_{\R/Z'}\neq \emptyset$.

On the other hand, writing $B=\sum b_iB_i$ where $B_i$ are the irreducible components (recall $B$ is the pushdown of $B_W$), we claim that 
$$
C_1'\le \sum b_i\pi^{[-1]}B_i.
$$ 
Assume $S'$ is a component of $C_1'$, say with coefficient $b'$. Then its birational transform on $W'$, which we still denote by $S'$,  
is a component of $B_{W'}$ with coefficient $b'$. So $S'$ maps onto a component $S$ of $B_W$ which is the 
birational transform of a component of $B$, say with coefficient $b$ which we again denote by $S$. Thus 
$$
b'=\mu_{S'}\Theta'=\mu_{S'}B_{W'}\le \mu_SB_W=\mu_SB=b
$$  
where the inequality follows from the discussion of \ref{ss-log-disc-strict-transform}. This implies $b'S'\le  b\pi^{[-1]}S$, and proves the claim $C_1'\le \sum b_i\pi^{[-1]}B_i$. 

Now since $A-B$ is pseudo-effective over $Z$, $|2A-B|_{\R/Z}\neq \emptyset$, so we have $2A\sim_\R B+G/Z$ where $G$ is effective. 
Thus over $Z'$ we have 
$$
\pi^*2A\sim_\R \pi^*(B+G)\ge \sum b_i\pi^{[-1]}B_i 
$$
giving 
$$
|\pi^*2A-\sum b_i\pi^{[-1]}B_i |_{\R/Z'} \neq \emptyset
$$ 
which in turn implies 
$$
|\pi^*2A-C_1'|_{\R/Z'} \neq \emptyset
$$ 
by the previous paragraph. 
By Theorem \ref{t-bnd-torification}, $A'-\pi^*A$ is ample over $Z'$ from which we get 
$|2A'-C_1'|_{\R/Z'} \neq \emptyset$.
Therefore, replacing $p$, we can assume $|pA'-\Theta'|_{\R/Z'} \neq \emptyset$.

\emph{Step 4.}
Consider a log resolution $U'\to W'$, 
let $K_{U'}+B_{U'}$ be the pullback of $K_{W'}+B_{W'}$, and let $\Theta_{U'}$ be the effective part of $B_{U'}$. 
Then $(U',\Theta_{U'})$ is $\epsilon$-lc as $(U',B_{U'})$ is sub-$\epsilon$-lc by \ref{ss-log-disc-strict-transform}.
Next, let $(U'',\Theta_{U''})$ be a minimal model of $(U',\Theta_{U'})$ over $X'$. Then $T'$ 
is not contracted over $U''$ as $K_{U'}+B_{U'}$ is nef over $X'$ and as $T'$ is not a component of $\Theta_{U'}-B_{U'}$ which in turn follows from  the fact that the coefficient of $T'$ in $B_{W'}$ is non-negative. In particular,
$$
a(T',U'',\Theta_{U''})\le 1.
$$

On the other hand, let $K_{X'}+\Lambda'$ be the pullback of $K_X+\Lambda$. Then any component of 
$\Lambda'$ with positive coefficient is either exceptional$/X$  
 or it maps onto a component of $\Lambda$ with positive coefficient. 
Thus by Theorem \ref{t-bnd-torification} such a component is a component of $D'$, hence $\Lambda'\le D'$. 
Therefore, 
$$
0\le a(T',X',D')\le a(T',X',\Lambda')=0
$$
where the equality follows from the assumption $a(T,X,\Lambda)=0$: indeed, the coefficient of 
$T$ in $\Lambda_W$ is $1$, hence the coefficient of $T'$ in $\Lambda_{W'}$ is also $1$ where 
$K_{W'}+\Lambda_{W'}$ is the pullback of $K_W+\Lambda_W$ which is the same as the pullback 
of $K_{X'}+\Lambda'$. 

Finally, let $F,G'$ be the fibres of $X\to Z$ and $W'\to Z$ over $z$, and let $F'$ be the fibre 
of $X'\to Z'$ over $z'$ where $z'$ is the image of $T'$ on $Z'$. Then 
$$
\mu_TF\le \mu_{T'}G'\le \deg (Z'\to Z)\mu_{T'}F'\le r' \mu_{T'}F'
$$ 
which means it is enough to bound $\mu_{T'}F'$. Therefore, we can replace 
$$
(W,B_W), (X,\Lambda), Z, A, T, r,
$$ 
with 
$$
(U'',\Theta_{U''}), (X',D'), Z', pA', T', p^{d-1}r',
$$ 
so we are reduced to the toroidal case.
\end{proof}

%%%%%%%%%%%%
\subsection{Reduction to the relative projective space case}
Next, we will use Theorem \ref{p-local-desc-torif-bnd-fib-II} to reduce Theorem \ref{t-bnd-mult-lc-places-fib-main} to the relative projective space case.

\begin{prop}\label{p-reduction-to-proj-space}
Assume Theorem \ref{t-bnd-mult-lc-places-fib-main} holds in dimension $\le d-1$. Also assume the theorem holds when $X=\PP^{d-1}_Z$, $\Lambda$ is the sum of the 
coordinate hyperplanes  plus the fibre over $z$, and $X\to Z$ is projection. Then the theorem holds in dimension $d$ in general. 
\end{prop}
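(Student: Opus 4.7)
The plan is to use Proposition \ref{p-reduction-to-toroidal} to reduce to a toroidal fibration, then to exploit the local toric model provided by Proposition \ref{p-local-desc-torif-bnd-fib-II} to move the $\epsilon$-lc data onto $P' = \PP^{d-1}_{Z'}$, where the hypothesised toric case applies. Residual degenerate situations where the base of an intermediate fibration fails to avoid a prescribed closed subset are handled by the induction on dimension.

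First I would invoke Proposition \ref{p-reduction-to-toroidal} to assume $(X,\Lambda)\to (Z, z)$ is toroidal with $D:=\Supp \Lambda$; the argument given there controls $\mu_T F$ up to a factor depending only on $d,r,\epsilon$. Next, apply Proposition \ref{p-local-desc-torif-bnd-fib-II} to $(X,D)\to Z$ at the image $x$ of the centre of $T$: after shrinking $Z$ around $z$ and passing (as in the proof of \ref{p-reduction-to-toroidal}) to the toroidalisation $X'\to X$ and adjusting the $\epsilon$-lc pair and $A$ accordingly, one obtains a commutative diagram
\[
\xymatrix{
& M' \ar[ld] \ar[rd] & & N' \ar[ll] \ar[d] \\
(X, D) \ar[rd] & & (Y', L') \ar[ld] \ar@{-->}[r] & (P' = \PP^{d-1}_{Z'}, G') \ar[lld] \\
& (Z', E') & &
}
\]
in which the inverse images of $D$ and $L'$ to $M'$ coincide near a point $m'$ above $x$, $M'\to X$ and $M'\to Y'$ are \'etale at $m'$, and every lc place of $(Y', L')$ centred at $y'$, the image of $m'$ in $Y'$, is also an lc place of $(P', G')$. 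Simultaneously one produces an $\epsilon$-lc pair $(W', B_{W'})$ and a prime divisor $T'$ over $X$ with $W' \bir X$ a morphism, $K_{W'}+B_{W'}$ nef over $Z'$, $|A' - B'|_{\R/Z'}\neq \emptyset$, $a(T', W', B_{W'})\le 1$, $a(T', X, D)=0$, centre of $T'$ passing through $x$, and $\mu_T F$ bounded above by a bounded multiple of the multiplicity of the relevant fibre along $T'$.

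Applying Proposition \ref{p-disc-after-str-transform} via the \'etale formal identification $\widehat{\mathcal{O}}_{X, x}\simeq \widehat{\mathcal{O}}_{Y', y'}$ supplied by $M'$ produces an $\epsilon$-lc pair $(V, C_V)$ with $V\to Y'$ projective birational, $K_V+C_V$ nef over $Y'$, and a prime divisor $S$ over $Y'$ corresponding to $T'$ with $a(S,V,C_V)\le 1$; the coincidence of $D$ and $L'$ near $m'$ together with $a(T', X, D)=0$ forces $a(S, Y', L')=0$, so by part (6) of \ref{p-local-desc-torif-bnd-fib-II} also $a(S, P', G')=0$. I would then transport $(V,C_V)$ across $Y'\bir P'$ using the alteration $N'$: strict-transforming the components of the pushdown of $C_V$ via $V\leftarrow N'\to P'$ (appealing to Lemma \ref{l-strict-transform-bnd-degree} together with the volume bound in part (7) of \ref{p-local-desc-torif-bnd-fib-II}) and running a log MMP over $P'$ after adding the pullback of a suitable ample divisor yields an $\epsilon$-lc pair on $P'$ whose log canonical is nef over $Z'$, together with a very ample$/Z'$ divisor on $P'$ of bounded relative degree against which the new boundary is pseudo-effective, and with $S$ still an lc place of $(P', G')$. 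Since $G'$ is the sum of the coordinate hyperplanes and the fibre over $z'$, the hypothesised toric case bounds the multiplicity of the fibre of $P'\to Z'$ along $S$ by some $l_0=l_0(d,r,\epsilon)$, which by the previous comparisons bounds $\mu_T F$.

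The principal obstacle is the bookkeeping required to verify that every hypothesis of the theorem propagates with parameters bounded in terms of $d,r,\epsilon$ through the toroidal reduction, the \'etale base change via $M'$, and the birational transfer across $Y'\bir P'$; in particular, that after the MMP the resulting boundary is still bounded against the very ample divisor on $P'$, which is precisely where Lemma \ref{l-strict-transform-bnd-degree} and property (7) of \ref{p-local-desc-torif-bnd-fib-II} are essential. The dimension-$\le d-1$ hypothesis disposes of the degenerate cases dropped during the applications of Theorems \ref{t-bnd-torification} and \ref{p-local-desc-torif-bnd-fib-II}, namely when $Z' \to T_i$ factors through a proper closed subset of the target, allowing the base variety to be replaced by a lower-dimensional one and the inductive hypothesis invoked.
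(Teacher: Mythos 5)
Your proposal captures the overall architecture correctly (toroidalize via Proposition \ref{p-reduction-to-toroidal}, pass to the local toric model via Proposition \ref{p-local-desc-torif-bnd-fib-II}, transfer the $\epsilon$-lc data with Proposition \ref{p-disc-after-str-transform}, control degrees with Lemma \ref{l-strict-transform-bnd-degree} and property (7), and finish with the hypothesized $\PP^{d-1}_Z$ case). However, there is a genuine gap in how the dimension-$\le d-1$ hypothesis is used.

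You claim that the induction on $d$ is needed to dispose of the degenerate cases dropped in Theorem \ref{t-bnd-torification} and Proposition \ref{p-local-desc-torif-bnd-fib-II}, when $Z'\to T_i$ factors through a proper closed subset. That is a mischaracterization: those degenerate cases are handled \emph{internally} in the proofs of \ref{t-bnd-torification} and \ref{p-local-desc-torif-bnd-fib-II} by induction on $\dim T$ (the dimension of the universal-family base), which has nothing to do with the relative dimension $d$ of the couples and never invokes the inductive hypothesis of the proposition you are proving. The actual and essential use of the hypothesis in dimension $\le d-1$ is different: before Proposition \ref{p-local-desc-torif-bnd-fib-II} can be applied, one must reduce to the case where the centre $\bar{x}$ of $T$ on $X$ is a closed point, since that proposition is stated for a closed point $x'\in X'$, and crucially its part (6) only guarantees that lc places of $(Y',L')$ with centre \emph{equal to} $y'$ are lc places of $(P',G')$; if $\bar{x}$ is positive dimensional, the corresponding $S$ has a positive-dimensional centre on $Y'$ merely passing through $y'$, so condition (6) does not apply, and neither does the comparison of fibre multiplicities via a single point. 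Your proposal silently passes to ``the image $x$ of the centre of $T$'' without addressing this, and that is where the argument breaks. The paper handles it by cutting $X$ and $W$ with $d-1$ general hyperplanes in $|A|_{/Z}$, restricting the boundaries and applying the theorem in dimension $d-1$ to the resulting $(G,B_G)\to (H,\Lambda_H)\to Z$ to bound the multiplicity along components of $T\cap G$; after this one may assume the centre of $T$ is a closed point and the rest of your argument goes through.
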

\begin{proof}
\emph{Step 1.}
We can assume $T$ is a divisor on $W$. 
Let $x$ be the generic point of the centre of $T$ on $X$. Assume the closure $\bar{x}$ is positive dimensional. 
Let $H\sim A/Z$ be general and let $G$ be its pullback to $W$. Let 
$$
K_H+\Lambda_H=(K_X+\Lambda+H)|_H \ \mbox{and} \ K_G+B_G=(K_W+B_W+G)|_G
$$ 
and $A_H=A|_H$. Then $(G,B_G)$, $(H,\Lambda_H)$, $A_H$, and $H\to Z$ satisfy the properties listed in Theorem \ref{t-bnd-mult-lc-places-fib-main}, that is, $(G,B_G)$ is $\epsilon$-lc, $(H,\Lambda_H)$ is lc, $K_G+B_G$ is nef over $H$, etc. 
Thus applying the induction hypothesis we deduce that the multiplicity of any component of $T\cap G$ in the fibre of $G\to Z$ over $z$ is bounded from above. Thus the multiplicity of $T$ in the fibre of $W\to Z$ over $z$ is bounded from above. We can then assume $x$ is a closed point. 

\emph{Step 2.}
By Proposition \ref{p-reduction-to-toroidal}, we can assume $(X,\Lambda)\to (Z,E)$ is toroidal where $E=z$.
In the proof of Proposition \ref{p-reduction-to-toroidal} we applied Theorem \ref{t-bnd-torification} to reduce to the toroidal situation. Using Theorem \ref{p-local-desc-torif-bnd-fib-II}, we can assume that stronger conditions hold in our setting. That is, if we apply \ref{p-local-desc-torif-bnd-fib-II} already in the proof of \ref{p-reduction-to-toroidal}, 
then perhaps after shrinking $Z$ around $z$, we can assume that there exists a commutative diagram 
$$
\xymatrix{
&M\ar[ld] \ar[rd] & &N \ar[ll]\ar[d]\\
(X,\Lambda) \ar[rd] & & (Y,L)\ar[ld] \ar@{-->}[r] & P=\PP^{d-1}_{Z}\ar[lld]\\
&(Z,E) &&
}
$$
satisfying the properties listed in  \ref{p-local-desc-torif-bnd-fib-II} with primes $'$ removed and 
$D$ replaced with $\Lambda$. For convenience, we recall those properties:
\begin{enumerate}
\item all arrows are projective morphisms, except $Y\bir P$ which is birational,

\item $N\to M$ is birational and $N\to P$ is an alteration, 

\item $M\to X$ and $M\to Y$ are \'etale at some closed point $m$ mapping to $x$, 

\item the inverse images of $\Lambda$ and $L$ to $M$ coincide near $m$,

\item  if $G$ is the sum of the coordinate hyperplanes of $P$ and the fibre over $E=z$, 
then the induced map $P\setminus G\bir Y$ is an open immersion,

\item $(Y,L)$ is lc near $y$, the image of $m$, and any lc place of $(Y,L)$ with centre at $y$ is an lc place of $(P,G)$, and
\item there is an ample$/Z$ Cartier divisor $H$ on $Y$ such that  
$$
\vol_{/Z}(A|_{N}+H|_N+G|_{N})\le r.
$$ 
\end{enumerate}

Since $(Y,L)$ is lc near $y$, $Y$ is normal at $y$. Moreover, $M$ is normal at $m$ as $M\to X$ (and $M\to Y$) is \'etale at $m$. Taking the normalisations of $M,Y,N$, we can assume they are all normal varieties. This does not affect the properties listed above. 

Also since $A-B$ is pseudo-effective over $Z$ by assumption (recall that $B$ on $X$ is the pushdown of $B_W$), 
replacing $A$ with $2A$ and replacing $r$ accordingly, we can assume $|A-B|_{\R/Z}\neq \emptyset$ which will be used when we apply Lemma \ref{l-strict-transform-bnd-degree} below.

\emph{Step 3.}
By (3), there is a prime divisor $R$ over $M$ with centre $m$ mapping onto $T$. The image $S$ of $R$ is then a divisor over $Y$ with centre $y$, by Lemma \ref{l-desced-prime-div}. If $K_M+\Lambda_M$ and $K_M+L_M$ are the pullbacks of $K_X+\Lambda$ and $K_Y+L$ near $m$, respectively, then by (4), $\Lambda_M=L_M$ near $m$. Thus we get
$$
0=a(T,X,\Lambda)=a(R,M,\Lambda_M)=a(R,M,L_M)=a(S,{Y},{L})
$$
where the first equality is by assumption.
This implies that $a(S,P,G)=0$ by (6).

\emph{Step 4.}
Now applying Proposition \ref{p-disc-after-str-transform} to $(W,B_W), X,{Y}$ over $Z$, we find an $\epsilon$-lc 
pair $(V,C_V)$ and a projective birational morphism $V\to {Y}$ such that 
\begin{itemize}
\item if $B=\sum b_iB_i$ is the pushdown of $B_W$ to $X$ and $C_{{Y}}$ is the pushdown of $C_V$ 
to ${Y}$, then 
$$
C_Y=(\sum b_i (B_i)_{{Y}})^{\le 1-\epsilon},
$$

\item $K_{V}+C_{V}$ is nef$/{Y}$, and 

\item we have
$$
1\ge a(T,W,B_W)\ge a(S,V,C_{V}).
$$
\end{itemize}
Here $(B_i)_{{Y}}$ is the strict transform of $B_i$ as defined in \ref{ss-strict-transform} using $M$   
in the diagram of Step 2. 
By Lemma \ref{l-strict-transform-well-defined}, these strict transforms are the same if defined using $N$ in the same diagram.
We will also use $N$ to define strict transform of divisors from $X$ to $P$.
However, for a divisor $E$ on $V$ or on ${Y}$, we will use the usual notation $E^\sim$ to denote its birational transform on $P$. 

\emph{Step 5.}
Our next goal is to show that $\deg_{G/Z}(C_V)^\sim$ is bounded from above. 
By (5), $G$ contains all the exceptional divisors of $P\bir {Y}$ because $P\setminus G\bir Y$ is an open immersion, hence $(C_V)^\sim\le (C_{{Y}})^\sim+G$. 
Thus it is enough to show  $\deg_{G/Z} (C_{{Y}})^\sim$ is bounded. 

Since ${Y}\bir P$ is birational, $(B_i)_{{Y}}^\sim\le (B_i)_P$ where $(B_i)_{{Y}}$ and $(B_i)_P$ are the strict transforms of 
$B_i$ on ${Y}$ and $P$: this follows from the fact that $(B_i)_{{Y}}$ and $(B_i)_P$ are both defined using the image of a certain divisor on $N$.
Thus 
$$
(C_Y)^\sim \le ( \sum b_i(B_i)_{{Y}})^\sim=\sum b_i(B_i)_{{Y}}^\sim \le  \sum b_i(B_i)_P.
$$
Then recalling (7) and applying Lemma \ref{l-strict-transform-bnd-degree} to $X,A,B$ and $P,G$, we get  
$$
\deg_{G/Z} (C_Y)^\sim \le \deg_{G/Z} \sum b_i(B_i)_P\le \vol_{/Z}(A|_N+G|_N)  \le r.
$$
We then have shown that $\deg_{G/Z} (C_V)^\sim$ is bounded from above. Therefore, since $P=\PP^{d-1}_Z$, there is a 
bounded natural number $t$ such that $tG-(C_V)^\sim$ is pseudo-effective over $Z$.

\emph{Step 6.}
 Let $V'$ be a common resolution of $V,P$, and let $K_{V'}+C_{V'}$ be the pullback of $K_V+C_V$. 
Let $I_Y\sim_\Q 2dH$ be general, and let $I_{V'}$ be its pullback to $V'$.
Then by boundedness of extremal rays applied on $V$, $K_{V'}+C_{V'}+I_{V'}$ is nef over $Z$.  
Now let $\Theta_{V'}$ be the effective part of $C_{V'}$ and let 
$$
\Delta_{V'}=\Theta_{V'}+I_{V'}.
$$ 
Let $\Theta_P,\Delta_P$ be the pushdowns of $\Theta_{V'},\Delta_{V'}$ to $P$. Then 
$$
\Theta_P\le (C_V)^\sim+G
$$
as $G$ contains all the exceptional divisors of $P\bir {Y}$ (and so of $P\bir V$). Moreover,   
$$
\Delta_P\le (C_V)^\sim+G+I_Y^\sim.
$$ 
In particular, $\deg_{G/Z} \Delta_P$ is bounded from above, by Step 5 and (7) of Step 2 (using Lemma \ref{l-strict-transform-bnd-degree} to bound $\deg_{G/Z} I_Y^\sim$).  
Thus replacing $t$, we can assume that $tG-\Delta_P$ is pseudo-effective over $Z$.

Additionally, we can assume that $(V',\Delta_{V'})$ is $\epsilon$-lc.
Moreover, we can assume that $S$ is a divisor on $V'$, and since $a(S,V,C_V)\le 1$, it is a component of $C_{V'}$ with non-negative coefficient, so it is not a component of $\Theta_{V'}-C_{V'}$. Therefore, running an MMP$/P$ on $K_{V'}+\Delta_{V'}$ ends with a minimal model $V''$ which does not contract $S$ because $K_{V'}+C_{V'}+I_{V'}$ is nef over $Z$ and 
$$
0\le \Theta_{V'}-C_{V'}=K_{V'}+\Delta_{V'}-(K_{V'}+C_{V'}+I_{V'}).
$$

\emph{Step 7.}
We are now ready to finish the proof.
To summarise the constructions above, we have the following setting: 
\begin{itemize}
\item $(V'',\Delta_{V''})$ is an $\epsilon$-lc pair and $(P=\PP^{d-1}_Z,G)$ is an lc pair, 
where $G$ is the sum of the coordinate hyperplanes and the fibre over $z$,

\item $V''\to P$ is birational and $P\to Z$ is projection onto a smooth curve, 

\item $K_{V''}+\Delta_{V''}$ is nef$/P$, 

\item  there is a fixed $t>0$ such that $tG-\Delta_P$ is pseudo-effective over $Z$  
where $\Delta_P$ is the pushdown of $\Delta_{V''}$,

\item $S$ is a prime divisor on $V''$ mapping to the closed point $z\in Z$ with 
$$
a(S,V'',\Delta_{V''})\le 1 ~~~\mbox{and}~~~ a(S,P,G)=0.
$$
\end{itemize} 

Finally, let $F_X,F_Y, F_M,F_P$ be the fibres of $X\to Z$, $Y\to Z$, $M\to Z$, and $P\to Z$ over $z$, respectively.  Since $M\to X$ is \'etale at $m$ by (3), we have 
$\mu_TF_X=\mu_RF_M$ where $R$ is the prime divisor over $M$ which maps onto $T$ and onto the prime divisor $S$ over $Y$ (as in Step 3). Similarly, 
$\mu_RF_M=\mu_SF_Y$, hence $\mu_TF_X=\mu_SF_Y$.
On the other hand, since $Y\bir P$ is birational, $\mu_SF_Y=\mu_SF_P$. 

By assumption, Theorem \ref{t-bnd-mult-lc-places-fib-main} holds in the relative projective case $\PP^{d-1}_Z\to Z$, so applying the theorem to the above setting, $\mu_SF_P$ is bounded, hence $\mu_TF_X$ is also bounded.
\end{proof}

%%%%%%%%%%%%%%%%%%%
%%%%%%%%%%%%%%%%%%%

\section{\bf Multiplicities along lc places: reduction to a toric problem}

In this section we continue our study of multiplicities of fibres along lc places. We will reduce Theorem \ref{t-bnd-mult-lc-places-fib-main} to a purely toric problem and then solve this problem.

\subsection{Models of divisors}\label{ss-V-Y-models}
Consider $(X,\Lambda)\to (Z,E)$ where $Z$ is a smooth curve, $E=z$ is a closed point,  $X=\PP^{d-1}_Z\to Z$ is projection, and $\Lambda$ is the sum of the coordinate hyperplanes plus the fibre over $z$. We will construct certain varieties and maps associated to toroidal divisors over $X$. 

(1) 
Assume $T$ is a prime divisor over $X$, toroidal with respect to $(X,\Lambda)$, such that $T$ maps to $z\in Z$. If $T$ is not exceptional over $X$, let $X'\to X$ be the identity map and let $T'\subset X'$ be the image of $T$. Otherwise, let $X'\to X$ be the extremal contraction that extracts (the birational transform of) $T$ and again let $T'\subset X'$ be the image of $T$. Then $X'$ is of Fano type over $Z$  (by the relative version of [\ref{B-Fano}, 2.13(7)]). So we can run a $-T'$-MMP over $Z$ ending with a model $V/Z$. By a slight abuse of notation, we denote the pushdown of $T'$ to $V$ by $T$.  We call $V\to Z$ \emph{the model associated to $T$}.

We will argue that 
\begin{itemize}
\item $V$ is a minimal model for $-T'$ over $Z$, 
\item $X\bir V$ is an isomorphism over $Z\setminus E$, 
\item $T$ is the only component of the fibre of $V\to Z$ over $z$, and
\item $V\to Z$ is a $K_V$-negative extremal contraction. 
\end{itemize}

Note that $T'$ is not contracted by the MMP as we are running a $-T'$-MMP.
Also since $T'$ is vertical over $Z$, $-T'$ is pseudo-effective over $Z$, so $V$ is a minimal model of $-T'$ over $Z$. Moreover, by construction, both $X'\to X$ and $X'\bir V$ are isomorphisms over $Z\setminus E$. On the other hand, since the fibre of $V\to Z$ over $z$ is connected and since $-T$ is nef over $Z$, $T$ cannot intersect any other component of this fibre, so $T$ is the only component of the fibre. 

If $T$ is not exceptional over $X$, then $V=X$ hence $V\to Z$ is a $K_V$-negative extremal contraction. Assume then that $T$ is exceptional over $X$ in which case $X'$ has Picard number two over $Z$. Then since $T$ is the only irreducible component of the fibre of $V\to Z$ over $z$, we see that the fibre of $X\to Z$ over $z$ is contracted over $V$. Therefore, since $X'$ has Picard number two over $Z$, $V$ has Picard number one over $Z$, and since $-K_V$ is ample over the generic point of $Z$, it is ample over the whole $Z$. Thus indeed $V\to Z$ is a $K_V$-negative extremal contraction.

(2) 
Suppose now that $T,D$ are distinct prime divisors over $X$ mapping to $z$ and toroidal with respect to $(X,\Lambda)$. Let $V\to Z$ be the model associated to $T$ as in (1). Since $T$ is the only component of the fibre of $V\to Z$ over $z$, $D$ is exceptional over $V$.
Also if $K_V+\Lambda_V$ is the pullback of $K_X+\Lambda$, then since $K_X+\Lambda\sim 0/Z$,  $(V,\Lambda_V)$ is lc and $a(D,V,\Lambda_V)=0$. Then we can extract $D$ via an extremal contraction $\psi\colon Y\to V$.
We call $Y\to V\to Z$ \emph{the model associated to $T,D$}.

Also given $r\in \N$ and $\epsilon\in (0,1]$, we define 
$$
\Theta_Y=(1-\epsilon)D+\psi^*r\Lambda_V.
$$

(3)
Under the assumptions of (1), assume $Z=\A^1$ and that $z$ is the origin. Then we are in the toric setting, that is, $(X,\Lambda)\to (Z,E)$ is a toric morphism and $T$ is a toric divisor over $X$. 
Let 
$$
\mathfrak{v}_1=(1,0,\dots,0),\dots,\mathfrak{v}_d=(0,\dots,0,1)
$$ 
be the standard basis of $\Z^d$ and let 
$$
\mathfrak{c}=-\sum_2^d \mathfrak{v}_i=(0,-1,\dots,-1).
$$ 
Then $X$ is the toric variety associated to the fan whose maximal cones are generated by $\mathfrak{v}_1$ and subsets of $\{\mathfrak{v}_2,\dots,\mathfrak{v}_d,\mathfrak{c}\}$ of size $d-1$, and $X\to Z$ is induced by the projection $\Z^d\to \Z$ onto the first factor. The support of the fan of $X$ is $\R^{\ge 0}\times \R^{d-1}$.

Now $T$ corresponds to a primitive vector $\mathfrak{n}=(n_1,\dots,n_d)\in \Z^d$ with $n_1>0$. Then the model $V\to Z$ 
associated to $T$ has a toric interpretation.  
Indeed, $V$ is the toric variety associated to the fan whose maximal cones are generated by $\mathfrak{n}$ and subsets of $\{\mathfrak{v}_2,\dots,\mathfrak{v}_d,\mathfrak{c}\}$ of size $d-1$, and $V\to Z$ is induced by the same projection $\Z^d\to \Z$. Moreover, the multiplicity of $T$ in the fibre of $V\to Z$ over $z$ is just $n_1$: to see this, let $\rho\colon \R^{\ge 0}\to \R$ be the support function of the divisor $E$ on $Z$, that is, $\rho$ is the linear map sending $1$ to $-1$. Now composing $\rho$ with the projection $\R^{\ge 0}\times \R^{d-1}\to \R$ onto the first factor we get the support function of $f^*E$ on $X$ where $f$ denotes $X\to Z$.  The value of this function at $\mathfrak{n}$ is $-n_1$ which implies that the multiplicity of $T$ in the pullback of $f^*E$ to any resolution of $X$ where $T$ appears, is $n_1$ (or we can use $V\to Z$ to calculate the multiplicity). 

If $D$ is another toric divisor over $X$, then similar reasoning shows that the varieties and morphisms in the model $Y\to V\to Z$ are toric. 

%%%%%%%%%%%%
\subsection{Reduction of Theorem \ref{t-bnd-mult-lc-places-fib-main} to a toric problem}
We have already reduced Theorem \ref{t-bnd-mult-lc-places-fib-main} to the relative projective case $X=\PP^{d-1}_Z\to Z$. But even if $(X,\Lambda)\to (Z,E)$ is toric, we still have $(W,B_W)$ which may be far from toric. We will use the constructions of the previous subsection to reduce Theorem \ref{t-bnd-mult-lc-places-fib-main} to the following problem which only has toric ingredients.

\begin{prop}\label{p-Nsigma-Y-model}
Let $d,r\in \N$ and $\epsilon\in (0,1]$. Then there exists $\epsilon'\in \R^{>0}$ depending only on $d,r,\epsilon$ satisfying the following.
Assume $(X,\Lambda)\to (Z,E)$ and $T,D$ are as in the toric setting \ref{ss-V-Y-models}(3), and  
that $Y\to V\to Z$ and $\Theta_Y$ are as in \ref{ss-V-Y-models}(1),(2) constructed for $T,D,r,\epsilon$. If 
$
a(D,V,0)<\epsilon',
$ 
 then 
$$
T^\sim\subseteq N_\sigma(K_Y+\Theta_Y/Z)
$$
where $T^\sim\subset Y$ is the birational transform of $T\subset V$.
\end{prop}

We will prove the proposition later. The proof shows that we can take $\epsilon'=\frac{\epsilon}{3dr}$. Also note that $N_\sigma$ is well-defined here as $Y$ is of Fano type over $Z$.

\begin{lem}\label{l-reduction-to-toric-case}
Assume that Theorem \ref{t-bnd-mult-lc-places-fib-main} holds in dimension $\le d-1$ and that Proposition \ref{p-Nsigma-Y-model} holds in dimension $d$. Then Theorem \ref{t-bnd-mult-lc-places-fib-main} holds in dimension $d$. 
\end{lem}
\begin{proof}
By Proposition \ref{p-reduction-to-proj-space}, it is enough to treat Theorem \ref{t-bnd-mult-lc-places-fib-main} in the relative projective case, that is, when $X=\PP^{d-1}_Z$ and $\Lambda$ is the sum of the coordinate hyperplanes plus the fibre over $z$. Adding the pullback of a general divisor $0\le J\sim_\Q 2\Lambda/Z$ to $B_W$, we can assume that $K_W+B_W$ is nef over $Z$.
Since $a(T,X,\Lambda)=0$, $T$ is toroidal with respect to $(X,\Lambda)$. We can assume that $T$ is exceptional over $X$ otherwise the multiplicity of $T$ in the fibre of $X\to Z$ over $z$ is 1.
Let $V\to Z$ be the model associated to $T$ as in \ref{ss-V-Y-models}(1). We need to show that the multiplicity of the fibre of $V\to Z$ along $T$ is bounded from above.

Consider $(X'',\Lambda'')\to (Z'',E'')$ where $Z''=\A^1$, $E''=z''$ is the origin, and $X''=\PP_{Z''}^{d-1}$ with $\Lambda''$ its toric boundary. Perhaps after shrinking $Z$ around $z$, there is an \'etale morphism $Z\to Z''$ mapping only $z$ to $z''$. Then $(X,\Lambda)\to (Z,E)$ is just the pullback of $(X'',\Lambda'')\to (Z'',E'')$ by base change to $(Z,E)$ (as in \ref{ss-tower-couples}).  
Moreover, corresponding to $T$, there is a prime divisor $T''$ over $X''$, toroidal with respect to $(X'',\Lambda'')$. If $V''\to Z''$ is the model associated to $T''$, as in \ref{ss-V-Y-models}(1), then 
$V\to Z$ is the pullback of $V''\to Z''$ by base change to $Z$ because taking base change to $Z$ of the steps of the construction of $V''$ gives the steps of the construction of $V$. 

Let $\epsilon'<0$ be as in Proposition \ref{p-Nsigma-Y-model} for the given data $d,r,\epsilon$.
Assume that $V''$ is $\epsilon'$-lc. Then the multiplicity of $T''$ in the fibre of $V''\to Z''$ over $z''$ is bounded, by [\ref{BC-toric}, Corollary 1.5], because $V''\to Z''$ is a toric Fano fibration. This in turn implies that the multiplicity of the fibre of $V\to Z$ along $T$ is bounded. 

From now on assume that $V''$ is not $\epsilon'$-lc which is equivalent to saying that $V$ is not $\epsilon'$-lc. We will derive a contradiction. Pick a prime divisor $D''$ over $V''$ with $a(D'',V'',0)<\epsilon'$. Let $D$ be the corresponding divisor over $V$. Then $a(D,V,0)<\epsilon'$, in particular, $D$ is toroidal with respect to $(X,\Lambda)$.     
Let ${\psi}\colon {Y}\to V$ and $\Theta_{{Y}}$ be as in \ref{ss-V-Y-models}(2) constructed for $D$.
 Let $K_{{Y}}+B_{{Y}}$ be the pullback of $K_W+B_W$ under ${Y}\bir W$. Then since $K_W+B_W$ is nef over $Z$, $K_{{Y}}+B_{{Y}}$ is a limit of movable$/Z$ $\R$-divisors. Thus  
$$
T^\sim\not\subset N_\sigma(K_{{Y}}+B_{{Y}}/Z),
$$
where $T^\sim\subset {Y}$ is the birational transform of $T\subset V$. Note that $N_\sigma$ is well-defined here as $Y$ is of Fano type over $Z$.

Write $B_Y=B_Y'+B_Y''$ where $B_Y'$ is the exceptional$/V$ part of $B_Y$ (so $B_Y',B_Y''$ have no common components). Let $B_V$ be the pushdown of $B_Y$, which coincides with the pushdown of $B_Y''$. By construction, the only possible exceptional divisor of $V\bir W$ is $T$ because  the only exceptional divisor of $V\bir X$ is $T$. So $B_V\ge 0$ as 
$$
\mu_TB_V=1-a(T,W,B_W)\ge 0.
$$ 
Thus $B_Y''\ge 0$ as $B_Y''$ is the birational transform of $B_V$. 
Since $B_Y''$ does not contain the exceptional divisor of $\psi$, we deduce that $\psi^*B_V-B_Y''\ge 0$. Thus 
$$
B_Y'+\psi^*B_V=B_{Y}'+B_Y''+\psi^*B_V-B_Y''=B_Y+\psi^*B_V-B_Y''
$$
where $\psi^*B_V-B_Y''$ is effective and exceptional/$V$ which in particular means that its support does not contain $T^\sim$. Therefore,
$$
T^\sim\not\subset N_\sigma(K_Y+B_Y+\psi^*B_V-B_Y''/Z)=N_\sigma(K_Y+B_Y'+\psi^*B_V/Z). 
$$

On the other hand, $B_V\equiv \alpha r\Lambda_V/Z$ for some number $\alpha$. Then $\alpha\le 1$: indeed, since $V\to Z$ is a Mori fibre space, $\alpha$ is determined over the generic point $\eta_Z$; but $V\bir X$ is an isomorphism over $\eta_Z$, so $\alpha$ satisfies $B\equiv \alpha r\Lambda/Z$ (recall that $B$ is the pushdown of $B_W$); then 
$$
\alpha r\le \alpha r\deg_{A/Z}\Lambda=\deg_{A/Z}B\le \deg_{A/Z}A\le r
$$  
where we use the assumption that $A-B$ is pseudo-effective over $Z$; 
thus $\alpha\le 1$ as claimed. Therefore,  
$$
T^\sim\not\subset N_\sigma(K_Y+B_Y'+\psi^*r\Lambda_V/Z).
$$

Since $(W,B_W)$ is $\epsilon$-lc and $D$ is the only exceptional divisor of $Y\to V$ and $B_Y'$ is exceptional over $Y$, we deduce that $B_Y'\le (1-\epsilon)D$. 
Thus 
$$
T^\sim\not\subset N_\sigma(K_Y+(1-\epsilon)D+\psi^*r\Lambda_V/Z)=N_\sigma(K_Y+\Theta_Y/Z) 
$$
because $(1-\epsilon)D-B_Y'$ is effective and exceptional$/V$ not containing $T^\sim$. 

Now if $Y''\to V''$ and $\Theta_{Y''}$ are constructed for $D''$ as in \ref{ss-V-Y-models}(2), then $Y\to V$ is the pullback of $Y''\to V''$ by base change to $Z$, and $K_Y+\Theta_Y$ is the pullback of $K_{Y''}+\Theta_{Y''}$. Thus 
$$
{T''^\sim}\not\subset N_\sigma(K_{Y''}+\Theta_{Y''}/Z) 
$$ 
where ${T''^\sim}$ is the birational transform of $T''\subset V''$ because running an MMP$/Z''$ on $K_{Y''}+\Theta_{Y''}$ corresponds to running an MMP$/Z$ on $K_{Y}+\Theta_{Y}$, and the latter does not contract $T^\sim$.
Therefore, applying Proposition \ref{p-Nsigma-Y-model}, we get a contradiction.
\end{proof}

We should remark that although we reduced Theorem \ref{t-bnd-mult-lc-places-fib-main} to the toric problem \ref{p-Nsigma-Y-model} by a relatively easy argument but in practice realising that \ref{p-Nsigma-Y-model} is the right setting to consider was perhaps the hardest part of completing the proof of \ref{t-bnd-mult-lc-places-fib-main}. Intuitively, there really is no reason why one should have considered that setting. A much more natural thing to do is to extract all the prime divisors $E_i$ over $V$ with $a(E_i,V,0)<\epsilon$, say via $\rho\colon U\to V$, and then consider 
$$
\Theta_U=\sum (1-\epsilon)E_i+\rho^*r\Lambda_V
$$ 
to proceed. But then it would be extremely hard to keep track of all the possibly numerous divisors $E_i$. Even working out examples in this setting using computers is quite time consuming. Hopefully we will write more about this story elsewhere.

%%%%%%%%%%%%%
\subsection{Proof of the toric problem}

\begin{proof}(Proof of Proposition \ref{p-Nsigma-Y-model})
\emph{Step 1.}
Assume that $T$ corresponds to the primitive vector 
$$
\mathfrak{n}=(n_1,\dots,n_d)\in \Z^d
$$ 
and that $D$ corresponds to the primitive vector 
$$
\mathfrak{l}=(l_1,\dots,l_d)\in \Z^d.
$$
 Note that $n_1,l_1\ge 0$ because the support of the fan of $X$ (hence of $V,Y$) is equal to $\R^{\ge 0}\times \R^{d-1}$ (see \ref{ss-V-Y-models}(3)). Moreover, since both $T,D$ map to $z$, we see that $n_1,l_1>0$. 
 
 Consider a maximal cone $\tau$ in the fan of $V$ which contains $\mathfrak{l}$. Then $\mathfrak{n}$ necessarily generates one of the edges of $\tau$, by the construction of $V$ or by noting that $l_1>0$.
In other words, $\tau$ is generated by $\mathfrak{n}$ and a subset 
$$
\{\mathfrak{e}_1,\dots, \mathfrak{e}_{d-1}\}\subset \{\mathfrak{v}_2,\dots,\mathfrak{v}_d,\mathfrak{c}\}.
$$
 We can write 
 \begin{equation}\label{eq-proof-toric-formula}
    \mathfrak{l}=\gamma \mathfrak{n}+\sum_1^{d-1} \alpha_j\mathfrak{e}_j
\end{equation}
where $\gamma, \alpha_j\ge 0$.
Since the first coordinate of each $\mathfrak{e}_j$ is $0$, we see that  $l_1=\gamma n_1$, so $\gamma=\frac{l_1}{n_1}$.

\emph{Step 2.}
By \ref{ss-toric-geometry},   
$$
a(D,V,0)=\gamma +\sum_1^{d-1} \alpha_j.
$$
We then have 
$$
K_Y+(1-a(D,V,0))D=\psi^*K_V.
$$
Let $S_V$ be the horizontal$/Z$ part of $\Lambda_V$, where as before $K_V+\Lambda_V$ is the pullback of $K_X+\Lambda$. Then 
$$
a(D,V,S_V)=\gamma
$$ 
again by \ref{ss-toric-geometry}. So 
$$
K_Y+S_V^\sim+(1-a(D,V,S_V))D=\psi^*(K_V+S_V)
$$
where $S_V^\sim$ denotes the birational transform of $S_V$.
Thus 
$$
\psi^*S_V=(a(D,V,0)-a(D,V,S_V))D+S_V^\sim=(\sum_1^{d-1} \alpha_j)D+S_V^\sim.
$$
This can also be seen by considering the support function of $S_V$, on the open subset $V_\tau\subset V$ defined by $\tau$, which assigns $-1$ to each  $\mathfrak{e}_j$ and assigns $0$ to $\mathfrak{n}$.

To ease notation, we denote $a:=a(D,V,0)$.
Since $K_V+S_V+T\sim 0/Z$ and $T\equiv 0/Z$, we have $K_V+S_V\equiv 0/Z$. Thus 
$$
K_Y+(1-a)D+\psi^*S_V=\psi^*K_V+\psi^*S_V\equiv 0/Z. 
$$
On the other hand,  
$$
n_1T^\sim+l_1D\sim 0/Z
$$ 
because $n_1T^\sim+l_1D$ is the fibre of $Y\to Z$ over $E=z$ (see \ref{ss-V-Y-models}(3) above for an explanation in terms of the pullback of the support function of $E$ on $Z$).

\emph{Step 3.}
Let 
$$
u:=(r-1)(\sum_1^{d-1} \alpha_j).
$$ 
Then making use of $\Lambda_V\equiv S_V/Z$ and the steps above, over $Z$ we have 
$$
K_Y+\Theta_Y\equiv K_Y+(1-\epsilon)D+\psi^*rS_V
$$
$$
=K_Y+(1-a)D+\psi^*S_V+(a-\epsilon)D+\psi^*(r-1)S_V
$$
$$
\equiv (a-\epsilon)D+\psi^*(r-1)S_V
$$
$$
=(a-\epsilon)D+(r-1)((\sum_1^{d-1} \alpha_j)D+S_V^\sim)
$$
$$
=(a-\epsilon+u)D+(r-1)S_V^\sim
$$
$$
\equiv (a-\epsilon+u)\frac{-n_1}{l_1}T^\sim+(r-1)S_V^\sim.
$$
$$
=(-a+\epsilon-u)\frac{n_1}{l_1}T^\sim+(r-1)S_V^\sim.
$$

\emph{Step 4.}
Let $W\to Z$ be the model associated to $D$ constructed as in \ref{ss-V-Y-models}(1). The support of the fibre of $W\to Z$ over $z$ is the birational transform of $D\subset Y$. The maximal cones of the fan of $W$ are generated by $\mathfrak{l}$ and subsets of $\{\mathfrak{v}_2,\dots,\mathfrak{v}_d,\mathfrak{c}\}$ of size $d-1$.
 Let $\pi\colon U\to W$ be the extremal toric birational contraction which extracts the birational transform of $T$. So $U\to W\to Z$ is the model associated to $D,T$ (first $D$, then $T$) as in \ref{ss-V-Y-models}(2) but with different notation. 
We then have a diagram 
$$
\xymatrix{
Y\ar[d] \ar@{-->}[rr]&  & U\ar[d]\\
V\ar[rd]& & W\ar[ld]\\
&Z.&
}
$$ 
 
 From the formula (\ref{eq-proof-toric-formula}) of Step 1, we get 
$$ 
\mathfrak{n}=\frac{1}{\gamma}\mathfrak{l}-\frac{1}{\gamma}\sum_1^{d-1} \alpha_j\mathfrak{e}_j.  
$$
Consider a maximal cone in the fan of $W$ containing $\mathfrak{n}$. Similar to Step 1, this cone is  generated by 
$\mathfrak{l}$ and a  subset 
$$
\{\mathfrak{f}_1,\dots, \mathfrak{f}_{d-1}\}\subset \{\mathfrak{v}_2,\dots,\mathfrak{v}_d,\mathfrak{c}\}.
$$
Then we can write 
$$
\mathfrak{n}=\lambda\mathfrak{l}+\sum_1^{d-1} \beta_k\mathfrak{f}_k
$$
where $\lambda, \beta_k\ge 0$.
Since the first coordinate of each $\mathfrak{f}_k$ is $0$, we have $n_1=\lambda l_1$, hence $\lambda=\frac{n_1}{l_1}=\frac{1}{\gamma}$. Thus multiplying by $\gamma$, we get 
$$
\gamma\mathfrak{n}=\mathfrak{l}+\sum_1^{d-1} \gamma\beta_k\mathfrak{f}_k,
$$
so
$$
\sum_1^{d-1} \gamma\beta_k\mathfrak{f}_k=\gamma\mathfrak{n}-\mathfrak{l}=
-\sum_1^{d-1} \alpha_j\mathfrak{e}_j.
$$ 
 
\emph{Step 5.} 
In this step assume that   
$$
-a+\epsilon-u>(r-1)(\sum_1^{d-1} \gamma\beta_k).
$$
Then  
$$
(-a+\epsilon-u)\frac{1}{\gamma}>(r-1)(\sum_1^{d-1} \beta_k)
$$
meaning 
$$
(-a+\epsilon-u)\frac{n_1}{l_1}>(r-1)(\sum_1^{d-1} \beta_k).
$$

On the other hand, similar to the formula for $\psi^*S_V$ in Step 2, we have 
$$
(r-1)\pi^*S_W=(r-1)(\sum_1^{d-1} \beta_k)T^\sim+(r-1)S_W^\sim
$$
where $S_W$ is the sum of the horizontal$/Z$ toric prime divisors on $W$ and $S_W^\sim$ is its birational transform.
So on $U$ we have 
$$
T^\sim\subseteq N_\sigma((-a+\epsilon-u)\frac{n_1}{l_1}T^\sim+(r-1)S_W^\sim/Z).
$$

Now by construction, $Y\bir U$ is an isomorphism in codimension one and $Y\to V$ and $V\bir W$ are isomorphisms over the generic point of $Z$.  By Step 3,
$$
K_Y+\Theta_Y \equiv (-a+\epsilon-u)\frac{n_1}{l_1}T^\sim+(r-1)S_V^\sim/Z,
$$
so considering birational transform of both sides on $U$, we have  
$$
K_U+\Theta_Y^\sim\equiv (-a+\epsilon-u)\frac{n_1}{l_1}T^\sim+(r-1)S_W^\sim/Z
$$
where we use the fact that the birational transforms of $S_V,S_W$ on $U$ are the same. Therefore, 
$$
T^\sim\subseteq N_\sigma(K_U+\Theta_Y^\sim/Z)
$$
which in turn implies 
$$
T^\sim\subseteq N_\sigma(K_Y+\Theta_Y/Z).
$$

\emph{Step 6.}
In this step we show that we can take $\epsilon'=\frac{\epsilon}{3dr}$. 
In view of the previous step, it is enough to ensure that 
$$
-a+\epsilon-u>(r-1)(\sum_1^{d-1} \gamma\beta_k).
$$
Recall the following equality from Step 4,
$$
\sum_1^{d-1} \gamma\beta_k\mathfrak{f}_k=-\sum_1^{d-1} \alpha_j\mathfrak{e}_j.
$$ 

We claim that $\gamma\beta_k<2a$ for each $k$. 
First, we examine the coordinates of $-\sum_1^{d-1} \alpha_j\mathfrak{e}_j$ (here we mean coordinates in terms of the $\mathfrak{v}_1,\dots, \mathfrak{v}_d$). The first coordinate is clearly zero, so we ignore it  and discuss the other coordinates. If no $\mathfrak{e}_j=\mathfrak{c}$, then each coordinate is as $-\alpha_j$ for some $j$; but if say $\mathfrak{e}_1=\mathfrak{c}$, then there is $i\ge 2$ such that $\mathfrak{e}_j\neq \mathfrak{v}_i$ for every $j$, so the $i$-th coordinate is $\alpha_1$ and the other coordinates are as $\alpha_1-\alpha_j$ for $j\ge 2$. So in any case, the absolute value of each coordinate of $-\sum_1^{d-1} \alpha_j\mathfrak{e}_j$ is $<a$ because $a=\gamma +\sum_1^{d-1} \alpha_j$.

Next, we examine the coordinates of $\sum_1^{d-1} \gamma\beta_k\mathfrak{f}_k$ (again in terms of the $\mathfrak{v}_i$, and again the first coordinate is zero and we ignore it). If no $\mathfrak{f}_k=\mathfrak{c}$, then the coordinates of $\sum_1^{d-1} \gamma\beta_k\mathfrak{f}_k$ are the $\gamma\beta_k$; but if say $\mathfrak{f}_1=\mathfrak{c}$, then there is $i\ge 2$ such that $\mathfrak{f}_k\neq \mathfrak{v}_i$ for every $k$, so the $i$-th coordinate is
 $-\gamma\beta_1$ and the others are as $-\gamma\beta_1+\gamma\beta_k$ for $k\ge 2$. Thus in any case, $\gamma\beta_k<2a$ for each $k$, by the previous paragraph, proving the claim. 
 
Now
$$
(r-1)(d-1)2a>(r-1)\sum_1^{d-1} \gamma\beta_k.
$$
Moreover, taking $\epsilon'=\frac{\epsilon}{3dr}$, the assumption $a<\epsilon'$ of the proposition ensures that 
$$
\epsilon-ra  > (r-1)(d-1)2a.
$$
Also
$$
-a+\epsilon-u=-a+\epsilon-(r-1)(\sum_1^{d-1} \alpha_j)\ge -a+\epsilon-(r-1)a=\epsilon-ra.
$$
Therefore, 
$$
-a+\epsilon-u>(r-1)(d-1)2a> (r-1)\sum_1^{d-1} \gamma\beta_k.
$$
\end{proof}

It is worth pointing out an alternative way in Step 6 to ensure that $\gamma\beta_k$ is sufficiently small (which would be enough to prove the proposition but perhaps with a smaller $\epsilon'$) is to observe that both the $\mathfrak{e}_j$ and the $\mathfrak{f}_k$ form a basis of $\Z^{d-1}$, so one can convert coordinates in terms of the $\mathfrak{e}_j$ into coordinates in terms of the $\mathfrak{f}_k$ using finitely many possible matrices.

\begin{exa}
We illustrate the proof of the proposition with this two-dimensional example.
Let $Z=\A^1$ and consider $(X=\PP^1_{Z},\Lambda)$ with its standard toric structure given by the fan generated by the vectors $(1,0), (0,1), (0,-1)$ in $\Z^2$. Let $T,D$ be the toric divisors over $X$ corresponding to $(n,1)$ and $(1,0)$, respectively, where $2\le n\in \N$. Then using notation in the proof above, $W=X$. Let $S_1,S_2$ be the horizontal components of $\Lambda$ corresponding to the vectors $(0,1),(0,-1)$. We can obtain $T$ by blowing up $S_1\cap D$ first, then blowing up the intersection of $D^\sim$ and the exceptional divisor and so on where in each step we blow up the intersection of $D^\sim$ and the newest exceptional divisor. We do $n$ blowups in total. The last exceptional divisor is $T$. At this point, $T$ is the only exceptional$/W$ divisor intersecting $D^\sim$. Next, we contract all the exceptional divisors except $T$. This gives $Y=U$. By the previous sentence, $D\cdot T^\sim=1$
(here we are abusing notation as we denote the birational transform of $D$ on $Y$ again by $D$). Contracting $D$ gives $Y\to V$. 

We can calculate  
$$
a(T,W,0)=n+1
$$   
by observing $(n,1)=n(1,0)+(0,1)$, so 
$$
K_Y-nT^\sim=\pi^*K_W. 
$$
Moreover, using this and $nT^\sim+D\sim 0/Z$, we can calculate 
$$
(K_Y+D)\cdot T^\sim=(K_Y-nT^\sim)\cdot T^\sim=0
$$
and so from $\Lambda_V\cdot T=\frac{2}{n}$ we get
$$
(K_Y+\Theta_Y)\cdot T^\sim=(-\epsilon D+\psi^*r\Lambda_V) \cdot T^\sim=
-\epsilon+\frac{2r}{n}.
$$
Therefore, if $n$ is large enough, then this intersection number is negative, and so 
$$
T^\sim\subseteq N_\sigma(K_Y+\Theta_Y/Z).
$$
Note that $n$ being large also means that 
$$
a(D,V,0)=\frac{2}{n}
$$   
is small, where we derive the equality from $(1,0)=\frac{1}{n}(n,1)+\frac{1}{n}(0,-1)$. 
\end{exa}

\subsection{Proof of Theorem \ref{t-bnd-mult-lc-places-fib-main}}

\begin{proof}[Proof of Theorem \ref{t-bnd-mult-lc-places-fib-main}]
By induction on dimension, we can assume the theorem holds in lower dimension. Now apply Lemma \ref{l-reduction-to-toric-case} and Proposition \ref{p-Nsigma-Y-model}.
\end{proof}

%%%%%%%%%%%%%%%%%%%%%%%%%%%%%%%%%%%%%
%%%%%%%%%%%%%%%%%%%%%%%%%%%%%%%%%%%%%

\section{\bf Singularities and complements on Fano fibrations}

In this section we prove Theorem \ref{t-mc-sh-conj} and its variant in the context of generalised pairs, and we prove  Corollaries \ref{cor-mckernan-conj}, \ref{cor-mult-fib} and \ref{cor-klt-comp}, and Theorem \ref{t-gonality}. The strategy of the proof of \ref{t-mc-sh-conj} is similar to the one employed in the proof of the main result of [\ref{B-sing-fano-fib}] which goes via reducing to the case $\dim Z=1$ and to boundedness of multiplicities of certain fibrations where we can apply Theorem \ref{t-bnd-mult-lc-places-fib-main}.

\subsection{Proof of Theorem \ref{t-mc-sh-conj} and Corollaries \ref{cor-mckernan-conj}, \ref{cor-mult-fib}}

\begin{proof}[Proof of Theorem \ref{t-mc-sh-conj}]
We can assume $\epsilon$ is sufficiently small and that it is of the form $\frac{1}{p}$ for some $p\in\N$. 
By taking hyperplane sections of birational models of $Z$, the theorem can be reduced to the case $\dim Z=1$, for example, see [\ref{B-sing-fano-fib}, Lemma 3.2] and its proof. We will then assume that $Z$ is a curve.
It is enough to show that the $\frac{\epsilon}{2}$-lc threshold $t$ of $f^*z$ with respect to $(X,B)$ is bounded from below, for each closed point $z\in Z$. Assuming $t$ is arbitrarily small, we will derive a contradiction. 

We can find a prime divisor $T$ over $X$ so that 
$$
a(T,X,B+tf^*z)=\frac{\epsilon}{2}.
$$  
Then since $(X,B)$ is $\epsilon$-lc, 
$$
\mu_Ttf^*z\ge \frac{\epsilon}{2}
$$ 
which means that $\mu_Tf^*z$ is arbitrarily large. It is enough to show that $\mu_Tf^*z$ is bounded from above.
For the rest of the proof we focus on showing this boundedness.
Replacing $B$ with $B+tf^*z$, replacing $\epsilon$ with $\frac{\epsilon}{2}$, and extracting $T$, we can assume that $T$ is a component of $B$ with coefficient $1-\epsilon$. Also we can assume $X$ is $\Q$-factorial. 

Run an MMP on $-(K_X+(1-\epsilon)T)$ over $Z$ which ends with a minimal model $X'$, that is, $-(K_{X'}+(1-\epsilon)T')$ is nef and big over $Z$. The bigness follows from the fact that $-K_X$ is big over $Z$ and $T$ is vertical over $Z$. Note however that the MMP might contract $T$ but this does not cause any problem.
Since $K_X+B\equiv 0/Z$ and $B\ge (1-\epsilon)T$, $(X',(1-\epsilon)T')$ is $\epsilon$-lc.
By Theorem \ref{t-bnd-compl-usual-local}, there is an $n$-complement $K_{X'}+\bar{\Lambda}'$ of $K_{X'}+(1-\epsilon)T'$ over $z$ with $\bar{\Lambda}'\ge (1-\epsilon)T'$, for some fixed $n\in \N$ depeding only on $d$.
Note that here we are making use of the assumption that $\epsilon$ is of the form $\frac{1}{p}$ so that $1-\epsilon=1-\frac{1}{p}$ is a standard number; the number $n$ does not depend on the choice of $p$ as it works for all standard numbers. Then we get an induced $n$-complement $K_X+\Lambda$ over $z$ with $\Lambda\ge (1-\epsilon)T$. Since $\epsilon$ is sufficiently small and $n$ does not depend on $\epsilon$, 
we deduce $\mu_T\Lambda=1$.

Run an MMP on $-K_X$ over $Z$ and replace $X$ with the resulting model so that $-K_X$ is nef and big over $Z$. 
This may contract $T$ but we will not need $T$ to be a divisor on $X$ any more: we only need the properties $a(T,X,B)\le 1$ and $a(T,X,\Lambda)=0$. Moreover, replacing $X$ with the ample model of $-K_X$, we can further assume that $-K_X$ is ample over $Z$. We may lose the $\Q$-factorial property of $X$ but we will not need it any more.

Now the general fibres $F$ of $X\to Z$ are $\epsilon$-lc Fano varieties. Then by [\ref{B-BAB}], $F$ belongs to a bounded family, and perhaps after replacing $n$ with a bounded multiple, we can assume that $-nK_F$ is very ample. Thus shrinking $Z$ around $z$ we can assume that $-nK_X$ is very ample over $Z\setminus\{z\}$. After shrinking $Z$, this defines an embedding $U\to \PP^q_{Z\setminus\{z\}}$ for some fixed $q$  where $U$ is the inverse image of $Z\setminus\{z\}$. Let $Y$ be the closure of the image of $U$ in $\PP^q_Z$. Then we get a birational map $X\bir Y/Z$ which is an isomorphism over $Z\setminus\{z\}$. Denote $Y\to Z$ by $g$. 

Let $D\subset Y$ be the support of the birational transform of the horizontal$/Z$ part of $\Lambda$. Then $(Y,D)\to Z$ is relatively bounded: indeed, $(Y,D)\to Z$ is flat as $Z$ is a smooth curve and $Y$ and each component of $D$ maps onto $Z$; moreover, if $A$ is the divisor on $\PP^q_Z$ which is pullback of a hyperplane on $\PP^q_k$, then $A$ is very ample$/Z$ and 
$$
\deg_{A/Z}A=(-nK_F)^{\dim F} \ \ \mbox{and} \ \ \deg_{A/Z}D=(-nK_F)^{\dim F-1}\cdot D_F\le (-nK_F)^{\dim F}
$$ 
are bounded from above. For the inequality we use the fact that 
$$
-nK_F\sim n\Lambda_F\ge D_F=D|_F.
$$ 

The rest of the proof is somewhat similar to the proof of Proposition \ref{p-reduction-to-toroidal}.
Now applying Theorem \ref{t-bnd-torification} to $(Y,D)\to Z$, $A$, $z$, perhaps after shrinking $Z$ around $z$, there exists a diagram 
$$
 \xymatrix{
 (Y',D') \ar[d]^{g'} \ar[r]^{\pi} &    (Y,D) \ar[d]^g \\
  (Z',E') \ar[r]^{\mu} & Z
  } 
$$  
satisfying the properties listed in the theorem. Pick $z'\in Z'$ mapping to $z$. 

Let $W'\to Y'$ be a log resolution so that the induced map $W'\bir X$ is a morphism. Let $K_{W'}+B_{W'}$ and $K_{W'}+\Lambda_{W'}$ be the pullbacks of $K_X+B$ and $K_X+\Lambda$, respectively. Then $(W',B_{W'})$ is sub-$\epsilon$-lc and $(W',\Lambda_{W'})$ is sub-lc. We can assume that 
$$
(W',\Supp B_{W'}\cup \Supp \Lambda_{W'})
$$ 
is log smooth. We then have the diagram 
$$
\xymatrix{
&& W'\ar[d]\ar[drr]& \\
& & {Y'} \ar[r] \ar[d] & Y \ar[d]\ar@{-->}[r]& X\ar[ld]\\
&& Z'\ar[r] &  Z.
}
$$

Let $\Lambda'$ be the pushdown of $\Lambda_{W'}$ to $Y'$. We claim that $\Lambda'\le D'$ holds over a neighbourhood of $z'$. Indeed, assume $R$ is a component of $\Lambda'$ with positive coefficient which is not a component of $D'$. Then $R$ is not mapped to $z'$ because by Theorem \ref{t-bnd-torification}, $D'$ contains the support of the fibre of $X'\to Z$ over $z$. So we can assume $R$ is horizontal over $Z'$. Since $Y'\setminus D'\to Y\setminus D$ is quasi-finite, $R$ is not exceptional over $Y$. But then $R$ is mapped onto a component of $\Lambda$ by $Y'\bir X$ because its coefficient in $\Lambda'$ is positive and because $X\bir Y$ is an isomorphism over $\eta_Z$ (see \ref{ss-log-disc-strict-transform}), so $R$ is mapped into $D$, hence $R$ is a component of $D'$, a contradiction. We have then proved the claim.

There is a prime divisor $T'$ over $Y'$ so that $T'$ maps to $z'$ and that $T'$ maps onto $T$ (on some resolution of $X$). We can assume that $T'$ is a divisor on $W'$. Moreover, by construction, $K_{W'}+\Lambda_{W'}\sim_\Q 0/Z'$, so $K_{W'}+\Lambda_{W'}$ is the pullback of $K_{Y'}+\Lambda'$. Then  
$$
a(T',Y',\Lambda')=a(T',W',\Lambda_{W'})=0
$$
where the second equality follows from $a(T,X,\Lambda)=0$ (see \ref{ss-log-disc-strict-transform}). Therefore, from 
$\Lambda'\le D'$ we get, 
$$
a(T',Y',D')\le a(T',Y',\Lambda')=0
$$ 
which in turn gives $a(T',Y',D')=0$ where we use the fact that $(Y',D')$ is lc, by Lemma \ref{l-toroidal-couple-lc}, as it is toroidal. 

On the other hand, 
$$
\deg(W'\to X)=\deg(Y'\to Y)\le r'
$$ 
where $r'$ is given by Theorem \ref{t-bnd-torification}. Letting $\Delta=\frac{1}{r'}B+(1-\frac{1}{r'})\Lambda$, we can see that $(X,\Delta)$ is $\frac{\epsilon}{r'}$-lc and that   
$$
a(T,X,\Delta)\le \frac{1}{r'}.
$$  
This implies that the coefficient of $T'$ in $\Delta_{W'}$ is non-negative where $K_{W'}+\Delta_{W'}$ is the pullback of $K_X+\Delta$ (by \ref{ss-log-disc-strict-transform}). 

Let $\Gamma_{W'}$ be the effective part of $\Delta_{W'}$. Since $(W',\Delta_{W'})$ is sub-$\frac{\epsilon}{r'}$-lc and since $(W',\Delta_{W'})$ is log smooth, $(W',\Gamma_{W'})$ is $\frac{\epsilon}{r'}$-lc. 
Let $\Gamma'$ be the pushdown of $\Gamma_{W'}$ to $Y'$. Recall $A'$ from Theorem \ref{t-bnd-torification}. We claim that $A'-\Gamma'$ is pseudo-effective over $Z'$, perhaps after replacing $A'$ with a bounded multiple. We can work over the generic point $\eta_{Z'}$. Over this point, $\rho\colon Y'\bir X$ is a morphism and letting $K_{Y'}+\Delta_{Y'}=\rho^*(K_X+\Delta)$, we see that $\Gamma'$ is the effective part of $\Delta_{Y'}$. So 
$\Gamma'\le \rho^*\Delta+D'$ because $D'$ contains all the exceptional prime divisors of $Y'\to Y$. We can assume that $A'-D'$ is pseudo-effective over $Z'$, so it is enough to show $A'-\rho^*\Delta$ is pseudo-effective over $Z'$. Since $-\Delta\equiv K_X/Z$, in turn it is enough to show $A'+\rho^*K_X$ is pseudo-effective over $Z'$. By construction, $A+K_X$ is nef over $\eta_{Z}$ and by Theorem \ref{t-bnd-torification}, $A'-\pi^*A$ is ample over $Z'$, so $A'+\rho^*K_X$ is pseudo-effective over $Z'$ as required. 

Now $K_{W'}+\Delta_{W'}\equiv 0/Z'$ and $T'$ is not a component of $\Gamma_{W'}-\Delta_{W'}\ge 0$. 
So running the MMP on $K_{W'}+\Gamma_{W'}$ over $Y'$ ends with a minimal model $(W'',\Gamma_{W''})$ and the MMP does not contract $T'$. Therefore, applying Theorem \ref{t-bnd-mult-lc-places-fib-main} to $(W'',\Gamma_{W''})$, $(Y',D')\to Z'$, $W''\to Y'$, $A'$, $z'$, we deduce that $\mu_{T'}G'$ is bounded from above where $G'$ is the fibre of $Y'\to Z'$ over $z'$. Since $\deg(Z'\to Z)\le r'$, we deduce that the coefficient of $T'$ in the fibre of $Y'\to Z$ over $z$ is bounded, and this in turn implies that $\mu_TG$ is bounded for the fibre $G$ of $Y\to Z$ over $z$.
Note that $T$ may not be a divisor on $Y$, so $\mu_TG$ is calculated on some resolution of $Y$ where $T$ appears (a similar remark applies to $\mu_{T'}G'$). Therefore, the multiplicity of $T$ with respect to the fibre of $X\to Z$ over $z$ is also bounded as desired, and we are done.
\end{proof}

\begin{proof}[Proof of Corollary \ref{cor-mckernan-conj}] 
Take a boundary $B$ so that $(X,B)$ is $\epsilon$-lc and $K_X+B\sim_\Q 0/Z$. Applying Theorem \ref{t-mc-sh-conj}, the generalised pair $(Z,B_Z+M_Z)$ on the base given by adjunction is $\delta$-lc where $\delta$ depends only on $d,\epsilon$. Since $K_Z$ is $\Q$-Cartier, it follows that $Z$ is $\delta$-lc. 
\end{proof}

\begin{proof}[Proof of Corollary \ref{cor-mult-fib}] 
Take $B$ so that $(X,B)$ is $\epsilon$-lc and $K_X+B\sim_\Q 0/Z$. Now by Theorem \ref{t-mc-sh-conj}, the lc threshold $t$ of $f^*z$ with respect to $(X,B)$ is bounded from below. Therefore, the coefficients of $f^*z$ are bounded from above.
\end{proof}

\subsection{Generalised Fano type contractions}

The next result extends Theorem \ref{t-mc-sh-conj} in several different directions by considering generalised pairs, real coefficients, and a base variety $S$. 

\begin{thm}\label{t-sh-sing-gen-fib}
Let $d\in \N$ and $\epsilon\in \R^{>0}$. 
 Then there is $\delta \in \R^{>0}$ depending only on $d,\epsilon$ satisfying the following. 
 Assume that $(X,B+M)$ is a generalised pair with data $X'\to X\to S$ and $M'$ 
such that 
\begin{itemize}
\item $(X,B+M)$ is generalised $\epsilon$-lc,

\item we have a contraction $f\colon X\to Z/S$ with $d=\dim X-\dim Z$, 

\item $K_X+B+M\sim_\R 0/Z$, and  

\item $-K_X$ is big over $Z$.
\end{itemize}
Then the generalised pair $(Z,B_Z+M_Z)$ over $S$ given by generalised adjunction 
$$
K_X+B+M\sim_\R f^*(K_Z+B_Z+M_Z),
$$
is generalised $\delta$-lc. 
\end{thm}
\begin{proof}
Note that here we view $(Z,B_Z+M_Z)$ as a generalised pair with data $Z'\to Z\to S$ and $M_{Z'}$ where $Z'\to Z$ is a high resolution and $M_{Z'}$ is the moduli part of adjunction on $Z'$. However, to show that $(Z,B_Z+M_Z)$ is generalised $\delta$-lc is a local condition over $Z$, so we may ignore $S$ and view $(Z,B_Z+M_Z)$ as a generalised pair with data $Z'\to Z\to Z$ and $M_{Z'}$ where $Z\to Z$ is the identity morphism.
  
First, we reduce the theorem to the case of usual pairs, that is, when $M'=0$. Using MMP, we can reduce to the situation where $B+M$ is ample over $Z$: indeed, initially taking a $\Q$-factorialisation we can assume $B+M$ is $\R$-Cartier; since $B+M$ is big over $Z$, it has a minimal model on which it is semi-ample over $Z$ (note that $X$ is of Fano type over $Z$, so we can run MMP on any $\R$-Cartier divisor); replacing $X$ with the ample model of $B+M$ over $Z$, we can assume $B+M$ is ample over $Z$.  

Pick a resolution $Z'\to Z$ and a prime divisor $D$ on $Z'$. We can assume $\phi\colon X'\to X$ is a resolution and that the induced map $f'\colon X'\bir Z'$ is a morphism. Let $K_{X'}+B'+M'$ be the pullback of $K_{X}+B+M$. We need to show that the generalised lc threshold of $f'^*D$ with respect to $(X',B'+M')$ over the generic point of $D$ is bounded from below away from $0$. Since $M'$ is nef over $Z$, $t$ is the lc threshold of $f'^*D$ with respect to $(X',B')$ over the generic point of $D$, i.e. $t$ is the largest number such that the coefficients of $B'+tf'^*D$ over the generic point of $D$ do not exceed $1$. 

Pick $s\in(0,1)$ sufficiently close to $1$. 
Then we can write 
$$
K_{X'}+B_s'+sM'=\phi^*(K_X+sB+sM)
$$
where $B'-B_s'$ has sufficiently small coefficients. 
Since $B+M$ is ample over $Z$, by taking a general 
$$
0\le L\sim_\R (1-s)(B+M)/Z
$$ 
and adding its pullback to $B_s'+sM'$, 
we can find $\Delta'\sim_\R B'+M'/Z$ so that 
\begin{itemize}
\item $(X',\Delta')$ is sub-$\frac{\epsilon}{2}$-lc, 
\item $\Delta'\ge B_s'$, and
\item if $\Delta$ is the pushdown of $\Delta'$ to $X$, then $\Delta\ge 0$. 
\end{itemize}
In particular, $(X,\Delta)$ is $\frac{\epsilon}{2}$-lc, and we can assume that if $u$ is the lc threshold of $f'^*D$ with respect to $(X',\Delta')$ over the generic point of $D$, then $u-t$ is sufficiently small (positive or negative). Therefore, we can reduce the problem to the case of usual pairs by considering adjunction for $(X,\Delta)$ over $Z$. From now on then we assume $M'=0$. 
 
On the other hand, to reduce to Theorem \ref{t-mc-sh-conj} we need to reduce to the case when $B$ is a $\Q$-boundary. Using approximation, we can find real numbers $r_i\ge 0$ and $\Q$-boundaries $B_i$ so that $\sum r_i=1$ and $B=\sum r_iB_i$. Moreover, we can ensure that $K_{X}+B_i\sim_\Q 0/Z$ and that $(X,B_i)$ are $\frac{\epsilon}{2}$-lc. 

We use the notation introduced above in the third paragraph. 
Let $K_{X'}+B_i'$ be the pullback of $K_{X}+B_i$. Let $t_i$ be the lc threshold of $f'^*D$ with respect to $(X',B_i')$ over the generic point of $D$. Then since $B'=\sum r_iB_i'$, we deduce $t\ge \sum r_it_i$. Applying Theorem \ref{t-mc-sh-conj}, there is a positive real number $\delta$ depending only on $d,\epsilon$ such that $t_i\ge \delta$. But then $t\ge \sum r_i\delta=\delta$. So the discriminant b-divisor ${\bf B}_Z$  has coefficients $\le 1-\delta$, hence $(Z,B_Z+M_Z)$ is generalised $\delta$-lc.
\end{proof}

\subsection{Klt complements}

Next, we aim to prove Corollary \ref{cor-klt-comp}.

\begin{lem}\label{l-fixed-movable-div-fib}
Assume that 
\begin{itemize}
\item $f\colon X\to Z$ is a contraction from a normal variety onto a smooth curve, 
\item  $D\ge 0$ is a $\Q$-Cartier integral divisor on $X$, and 
\item for the general fibres $F$ of $f$, $D|_F$ is base point free. 
\end{itemize}
 Then there exists a log  resolution $\phi\colon W\to X$ so that we can write $\phi^*D=S+M$ where $0\le S\le \phi^*D$ is vertical over $Z$ and $M$ is base point free over $Z$.  
\end{lem}
\begin{proof}
Let $\psi\colon X'\to X$ be a resolution. Let $F$ be a general fibre of $f$ and $F'$ the corresponding fibre of $X'\to Z$. Then $(\psi^*D)|_{F'}$ is the pullback of $D|_F$. Since the latter is base point free, it is Cartier, hence the former is Cartier too which implies that $\psi^*D$ is integral and Cartier over the generic point $\eta_Z$. 
It is enough to prove the lemma for $X'\to Z$, $\rddown{\psi^*D}$ instead of $X\to Z,D$. Thus we can assume that $X$ is smooth. Moreover, letting $\overline{Z}$ be a smooth compactification of $Z$, we can assume that $X$ is an open subset of a smooth projective variety $\overline{X}$ over $\overline{Z}$. 
Let $\overline{D}$ be the closure of $D$ in $\overline{X}$. It is enough to prove the lemma for $\overline{X}\to \overline{Z}$, $\overline{D}$. Thus replacing $X\to Z$, $D$ with $\overline{X}\to \overline{Z}$, $\overline{D}$, we can assume that $X,Z$ are smooth and projective. 

Let $F$ be a general fibre of $f$, say over a point $z$. We can assume that $R^1f_*\mathcal{O}_X(D)$ is locally free near $z$. 
 Then taking direct image of the exact sequence 
$$
0\to \mathcal{O}_X(D-F)\to  \mathcal{O}_X(D)\to \mathcal{O}_F(D|_F)\to 0
$$
we get an exact sequence 
$$
0\to f_*\mathcal{O}_X(D-F)\to  f_*\mathcal{O}_X(D)\to f_*\mathcal{O}_F(D|_F)\to 0
$$
because the sequence is clearly exact outside $z$ and because 
$$
R^1f_*\mathcal{O}_X(D-F)=R^1f_*\mathcal{O}_X(D)\otimes_{\mathcal{O}_Z} \mathcal{O}_Z(-z) \to R^1f_*\mathcal{O}_X(D)
$$ 
is injective near $z$ as $R^1f_*\mathcal{O}_X(D)$ is locally free near $z$.

Now let $m\gg 0$ be a natural number and consider the induced exact sequence
$$
0\to f_*\mathcal{O}_X(D+(m-1)F)\to  f_*\mathcal{O}_X(D+mF)\to f_*\mathcal{O}_F(D|_F)\to 0.
$$ 
Then 
$$
H^1(f_*\mathcal{O}_X(D+(m-1)F))=H^1(f_*\mathcal{O}_X(D)\otimes_{\mathcal{O}_Z} \mathcal{O}_Z((m-1)z))=0
$$ 
by Serre vanishing as $m\gg 0$, so we get a surjection 
$$
H^0(f_*\mathcal{O}_X(D+mF))\to H^0(f_*\mathcal{O}_F(D|_F))
$$
which means we get a surjection 
$$
H^0(D+mF)\to H^0(D|_F).
$$
Then the base locus of the linear system $|D+mF|$ is disjoint from $F$, hence this base locus is vertical over $Z$. Therefore, there exists a log resolution $\phi\colon W\to X$ so that we can write $\phi^*(D+mF)=S+N$ where
$S$ is the fixed part of $\phi^*(D+mF)$ and $N$ is free. Since $D+mF$ is effective, 
$S\le \phi^*(D+mF)$. Moreover, since the base locus of $|D+mF|$ is vertical over $Z$ and it does not intersect $F$, $S$ is vertical over $Z$ with no common component with $\phi^*F$, hence in particular $S\le \phi^*D$. 

Now let $M=N-\phi^*mF$. Then $M$ is base point free over $Z$ and $\phi^*D=S+M$.
\end{proof}

\begin{proof}[Proof of Corollary \ref{cor-klt-comp}]
By Theorem \ref{t-mc-sh-conj} and Corollary \ref{cor-mult-fib}, there is a fixed natural number $r>0$ such that $(X,B:=\frac{1}{r}f^*z)$ is klt and the coefficients of $B$ belong to a fixed finite set. Then by Theorem \ref{t-bnd-compl-usual-local}, there is an $n$-complement $K_X+\Lambda$ for $K_X+B$ over $z$, for some fixed number $n\ge 2$ depending only on $d$ and the coefficients of $B$, where $\Lambda\ge B$. We can assume that $n(K_X+B)$ is integral, hence $n(\Lambda-B)$ is integral. Also shrinking $Z$ around $z$, we have 
$$
n(\Lambda-B)\sim -n(K_X+B).
$$

Since $X$ is $\epsilon$-lc, a general fibre $F$ of $f$ is an $\epsilon$-lc Fano variety of dimension $d-1$, hence it belongs to a fixed bounded family, by the BAB [\ref{B-BAB}, Theorem 1.1]. Thus we can choose $n$ so that $-nK_F=-n(K_X+B)|_F$ is base point free which also means that $n(\Lambda-B)|_F$ is base point free. Therefore, by Lemma \ref{l-fixed-movable-div-fib}, there exists a log  resolution $\phi\colon W\to X$ so that we can write 
$$
\phi^*n(\Lambda-B)=S+M
$$ 
where $0\le S\le \phi^*n(\Lambda-B)$ is vertical over $Z$ and $M$ is base point free over $Z$. 

Let $K_W+B_W=\phi^*(K_X+B)$. Then 
$$
K_W+B_W+\phi^*(\Lambda-B)=\phi^*(K_X+\Lambda)
$$  
and since $(X,\Lambda)$ is lc over $z$, the coefficients of $B_W+\phi^*(\Lambda-B)$ over $z$ do not exceed $1$. 
Thus since $0\le S\le \phi^*n(\Lambda-B)$, the coefficients of $B_W+\frac{1}{n}S$ over $z$ do not exceed $1$. 

Now choose a general $L\sim M/Z$. Then the coefficients of $B_W+\frac{1}{n}(S+L)$ over $z$ do not exceed $1$. Moreover, 
$$
n(K_W+B_W+\frac{1}{n}(S+L))\sim n\phi^*(K_X+\Lambda)\sim 0/Z.
$$
Therefore, letting 
$$
\Lambda':=\phi_*(B_W+\frac{1}{n}(S+L)),
$$ 
we get an $n$-complement $K_X+\Lambda'$ of $K_X+B$ over $z$ which is klt over the generic point $\eta_Z$. Replacing $\Lambda$ with $\Lambda'$, we can then assume that  
 $(X,\Lambda)$ is klt over $\eta_Z$. Now 
$K_X+\Lambda-B$ is a klt $rn$-complement of $K_X$ over $z$. We can then replace $n$ with $rn$.
\end{proof}

\subsection{Special fibres of del Pezzo fibrations}

\begin{proof}[Proof of Theorem \ref{t-gonality}]
Let $z$ be the image of $S$ on $Z$. 
By Theorem \ref{t-mc-sh-conj}, there is $t>0$ depending only on $\epsilon$ such that $(X,tf^*z)$ is klt. Since $-(K_X+tf^*z)$ is ample over $Z$, we can find $B\ge tf^*z$ such that $(X,B)$ is klt and $K_X+B\sim_\Q 0/Z$. Now apply [\ref{BL}, Theorem 1.3].
\end{proof}

%%%%%%%%%%%%%%%%%%%%%%
%%%%%%%%%%%%%%%%%%%%%%%

\section{\bf Rationally connected varieties with nef $-K$}

In this section we prove Theorems \ref{t-mc-pro-conj} and \ref{t-bnd-rc-gen-pairs}. The latter implies the former. The strategy of the proof of \ref{t-bnd-rc-gen-pairs} is similar to the one given in [\ref{BDS}][\ref{CDCH+}] in dimension $3$ which splits the theorem into two cases: when $K_X\not \equiv 0$ and when $K_X\equiv 0$. In the first case, we use the MMP to reduce to the situation when we have a Fano type fibre structure in which case we can apply Theorem \ref{t-mc-sh-conj}, [\ref{B-FT-fib}, Theorem 2.3] and induction on dimension. In the second case, we need a different argument than the one given in [\ref{BDS}][\ref{CDCH+}] and this is the next proposition.

\subsection{Coefficients of certain pairs}

\begin{prop}\label{p-bnd-coeff-bnd-fam-kappa=0}
Let $\mathcal{P}$ be a bounded set of projective varieties. 
Then there is a real number $\alpha>0$ depending only on 
$\mathcal{P}$ satisfying the following. 
Assume that 
\begin{itemize}
\item $(X,B)$ is a projective klt pair with $X\in \mathcal{P}$, 
\item $K_X+B\equiv 0$ and $B\neq 0$, and
\item $\kappa_\sigma(-K_X)=\kappa_\sigma(B)=0$. 
\end{itemize}
Then some coefficient of $B$ is $\ge \alpha$.
\end{prop}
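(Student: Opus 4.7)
The plan is to argue by contradiction. Suppose the proposition fails, so there exists a sequence $(X_n,b_nD_n)$ with $X_n\in\mathcal{P}$, $B_n=b_nD_n$ (so $D_n$ prime), $K_{X_n}+b_nD_n\equiv 0$, $\kappa_\sigma(-K_{X_n})=0$, and $b_n\to 0^+$. By the boundedness of $\mathcal{P}$, I would realise $\{X_n\}$ as fibres of a fixed projective family $\pi\colon\mathcal{X}\to T$ over a reduced variety $T$ of finite type, equipped with a relatively very ample $\mathcal{H}$ of bounded fibrewise degree; after Noetherian stratification of $T$ and passing to a subsequence I may assume all $X_n$ lie in one irreducible stratum on which the numerical class $[-K_{X_t}]\in N^1(X_t)_\R$, the class $[H_t]$, and all relevant intersection numbers are constant. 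Since $-K_X\equiv bD$ with $b>0$ and $D$ an effective prime divisor, the quantity $c:=(-K_{X_t})\cdot H_t^{d-1}=b(D\cdot H^{d-1})$ is a fixed positive rational on the stratum, and the identity
\[
b_n \,=\, \frac{(-K_{X_n})\cdot H_n^{d-1}}{D_n\cdot H_n^{d-1}} \,=\, \frac{c}{D_n\cdot H_n^{d-1}}
\]
forces $D_n\cdot H_n^{d-1}\to\infty$.

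To derive a contradiction, the plan is to exploit $\kappa_\sigma(-K_{X_n})=0$. Since $-K_{X_n}\equiv b_nD_n$ is pseudo-effective of numerical dimension zero, $D_n$ must be the unique prime component of the Nakayama divisor $N_\sigma(-K_{X_n})$, and in particular $\kappa_\sigma(D_n)=0$. The class $[D_n]$ is an integral lattice point on the fixed rational ray $\R_{>0}[-K_{X_t}]\subset N^1(X_t)_\R$, so $[D_n]=n_n[D_0]$ for the primitive lattice vector $[D_0]$ on the ray, with $n_n\ge 1$. Using the relative Hilbert scheme of divisors on $\mathcal{X}/T$ (projective over $T$ in each Hilbert polynomial), I would argue that the prime effective divisors $D$ on fibres with $\kappa_\sigma(D)=0$ and $[D]$ on this ray form a bounded set as $t$ varies over the stratum; equivalently, that $n_n$ is uniformly bounded. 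This gives a uniform upper bound $M$ on $D_n\cdot H_n^{d-1}$, and hence $b_n\ge c/M$, contradicting $b_n\to 0$.

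The main obstacle is proving the uniform bound on $n_n$: \emph{a priori} the lattice points on the ray are unbounded, and each could conceivably be represented by an irreducible divisor. The essential use of the condition $\kappa_\sigma(D_n)=0$ is that $D_n$ does not move in its numerical class, so representatives of large multiples $n[D_0]$ are pinned down by intersection-theoretic invariants (for example, the arithmetic genus of the generic $(d-2)$-fold hyperplane section of $D$, which grows quadratically in $n$ by adjunction, must remain nonnegative, ruling out all but finitely many $n$ in a fixed stratum). Making this rigorous uniformly across the stratum---via semicontinuity of $N_\sigma$ in families, or by direct parametrisation of such divisors in the bounded ambient relative Hilbert scheme---is the technical heart of the argument.
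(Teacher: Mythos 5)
Your setup (realize the $X_n$ as fibres of a bounded family, stratify, fix intersection numbers, observe $b_n = c/(D_n\cdot H^{d-1})$ with $c>0$ fixed, so $D_n\cdot H^{d-1}\to\infty$) is sound, and you correctly identify that the whole problem reduces to bounding $D_n\cdot H^{d-1}$ using only $\kappa_\sigma(D_n)=0$. But that reduction is where the proposal stops: the step you call ``the technical heart'' is precisely the content of the proposition, and the heuristic you offer does not close it. The arithmetic-genus sketch is not a proof even in dimension two, because it presupposes $D_n^2\to-\infty$, which does not follow from $\kappa_\sigma(D_n)=0$ alone (one must rule out, e.g., $D_n$ nef with $D_n^2=0$), and in higher dimension the curve section $D\cap H_1\cap\cdots\cap H_{d-2}$ need not inherit rigidity, so there is no a priori reason its genus goes negative. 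There are also subtler issues you wave past: $N^1(X_t)_\Z$ can jump on the stratum, so ``the primitive lattice vector $[D_0]$ on the ray'' is not canonically defined across all $t_n$; and since $D_n\cdot H^{d-1}$ is unbounded, the $D_n$ escape every single component of the relative Hilbert scheme, so that scheme cannot directly be used to bound them.

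The paper proves this by a genuinely different route that avoids fibrewise intersection theory entirely. After arranging a $\Q$-factorial klt total space $V\to T$ with the $t_i$ dense, it transports the hypotheses to the \emph{generic} fibre $F$ via semicontinuity of $h^0$: $-K_F$ is pseudo-effective, $\kappa_\sigma(-K_F)=0$, and $-K_F\not\equiv 0$. Then it takes a nonzero effective $\Q$-divisor $E\le N_\sigma(-K_{\overline F})$ (after a finite base change), spreads it out to $E$ on $V$, and shows by semicontinuity again that $-K_{X_i}-E_i$ is pseudo-effective for each $i$; combining $\kappa_\sigma(-K_{X_i})=0$ with $-K_{X_i}\sim_\R B_i$ then forces $B_i\ge E_i$, giving a lower bound on the single coefficient that is independent of $i$. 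The crucial move you are missing is this generization to $F$ and restriction back: it replaces the ``bound the degree of $D_n$'' problem, which is hard fibrewise, by a comparison with a fixed divisor $E$ coming from the total space. You would need an argument of comparable strength (or an actual proof of your claimed bound) for the proposal to go through.
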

\begin{proof}
\emph{Step 1.}
If the proposition does not hold, then there is a sequence of pairs $(X_i,B_i)$ as in the 
proposition such that the coefficients of the $B_i$ converge to $0$. Replacing the sequence, we can assume that there is a flat projective morphism  
$V\to T$ of normal varieties such that $X_i$ is the fibre over a closed point $t_i\in T$ 
and such that $\{t_i\}$ is dense in $T$. In addition, taking a resolution of $V$ and using MMP, we can assume that $V$ is $\Q$-factorial and $V$ has klt singularities: indeed, take a resolution $W\to V$ and let $V'$ be a minimal model of $W$ over $V$; denote the fibre of $V'\to T$ over a general $t_i$ by $X_i'$ and let $K_{X_i'}+B_i'$ be the pullback of $K_{X_i}+B_i$ under the induced morphism $X_i'\to X_i$; then $K_{X_i'}$ is nef over $X_i$, so by the negativity lemma, we have $B_i'\ge 0$; moreover, $\kappa_\sigma(-K_{X_i'})\le \kappa_\sigma(-K_{X_i})$, hence the former is also zero; so we can replace $(X_i,B_i)$ with $(X_i',B_i')$ and replace $V$ with $V'$ (and replace $\mathcal{P}$ accordingly).

\emph{Step 2.} In this step we show that $-K_V$ is pseudo-effective over $T$.
Let $F$ be the generic fibre of $V\to T$. It is enough to show that $-K_F$ is pseudo-effective. 
Assume this is not the case. Pick a very ample$/T$ divisor $A$ on $V$ and let $A_F=A|_F$. There is $p\in \N$ such that $-pK_F+A_F$ is not pseudo-effective. Then $-rK_F+A_F$ is not pseudo-effective for any real number $r\ge p$.

For the rest of this step, we fix a sufficiently large natural number $l$ (e.g. $l\ge 3\dim F$). Let $A_i:=A|_{X_i}$. Since $-K_{X_i}$ is pseudo-effective 
and $A_i$ is very ample, the divisor $-mK_{X_i}+lA_i$ is potentially birational (see 
[\ref{HMX1}, Definition 2.3.3]) for any 
natural number $m$ and any $i$. Thus the linear system 
$$
|K_{X_i}-mK_{X_i}+lA_i|
$$ 
defines a birational map [\ref{HMX1}, Lemma 2.3.4] for any $m,i$. But then 
for each $m$ such that $(m-1)K_V$ is Cartier, we can find an $i$ such that  
$$
h^0(K_F-mK_{F}+lA_F)=h^0(K_{X_i}-mK_{X_i}+lA_i)>0
$$
where the left hand $h^0$ is dimension over the function field of $T$ while the right hand $h^0$ is dimension over $k$. 
The equality follows from base change of cohomology and from the assumption that $\{t_i\}$ is dense in $T$: indeed, we can apply [\ref{Hartshorne-AG}, Chapter III, Theorem 12.11] noting that all the sheaves $R^if_*\mathcal{O}_V(K_V-mK_V+lA)$ are locally free on some open subset of $T$ depending on $m$, where $f$ denotes $V\to T$.
Thus 
$$
-(m-1)K_F+lA_F
$$ 
is pseudo-effective contradicting the 
previous paragraph as soon as $\frac{m-1}{l}\ge p$.

\emph{Step 3.} Pick a natural number $n$ such that $nK_V$ is Cartier. We show that 
$$
h^0(-mnK_{F}+lA_F)
$$
is a bounded function of $m$.
For each $i$, by assumption $\kappa_\sigma(-K_{X_i})=0$, hence  
$$
h^0(-mnK_{X_i}+lA_i)
$$
is a bounded function of $m$ for each natural number $l$: indeed, let $\phi\colon Y_i\to X_i$ be a resolution; then $\kappa_\sigma(\phi^*(-K_{X_i}))=0$, hence $\kappa_\sigma^{-}(\phi^*(-K_{X_i}))=0$ [\ref{Nakayama}, Chapter V, Remark 2.6(4)] which in turn implies $\phi^*(-K_{X_i})\equiv N_\sigma(\phi^*(-K_{X_i}))$ [\ref{Nakayama}, Chapter V, Proposition 2.7(8)], so the function $h^0(mn\phi^*(-K_{X_i}))+lA_{Y_i})$ is bounded [\ref{Nakayama}, Chapter V, Corollary 1.12] where $A_{Y_i}$ is a sufficiently ample divisor so that $A_{Y_i}-\phi^*A_i$ is very ample; but then $h^0(-mnK_{X_i}+lA_i)$ is bounded as claimed. 
Therefore, by upper semi-continuity of cohomology, 
$
h^0(-mnK_{F}+lA_F)
$
is also a bounded function of $m$. 

\emph{Step 4.}
We show that $-K_F\not\equiv 0$. Assume not. Then $K_F\sim_\Q 0$ and there is $l$ such that 
$$
h^0(mnK_{F}+lA_F)>0
$$
for infinitely many $m$. Thus for each $i$, by upper semi-continuity of cohomology, we get 
$$
h^0(mnK_{X_i}+lA_i)>0
$$
for the same set of $m$. This implies that $K_{X_i}$ is pseudo-effective contradicting 
the assumption $B_i\neq 0$.
 
\emph{Step 5.}
Let $K=k(T)$ and let $\overline{K}$ be its algebraic closure. Let $\overline{F}$ be the base change of $F$ to $\overline{K}$. Then  
$
h^0(-mnK_{\overline{F}}+lA_{\overline{F}})
$
is a bounded function of $m$. Let $\psi\colon \overline{F}'\to \overline{F}$ be a resolution. Then   
$$
\psi^*(-K_{\overline{F}})\equiv {N}_{\overline{F}'}:=N_\sigma(\psi^*(-K_{\overline{F}}))\ge 0
$$
by arguing similar to Step 3 and using [\ref{Nakayama}, Chapter V, Corollary 1.12]. Therefore, 
$$
-K_{\overline{F}}\equiv N_{\overline{F}}:=\psi_*N_{\overline{F}'}\ge 0
$$
where we use the fact that $\overline{F}$ has klt singularities so the numerical equivalence on $\overline{F}'$ can be pushed down to $\overline{F}'$ (this is a consequence of MMP and the cone theorem).

 Here $N_{\overline{F}}$ is an $\R$-divisor, whose irreducible components are determined by finitely many equations over $\overline{K}$. So $N_{\overline{F}}$ can also be defined over a finite extension of $K$, hence after a finite base change of $V\to T$, we can assume that ${N}_{\overline{F}}$ is the pullback of an $\R$-divisor $N_F\ge 0$ on $F$. Then $-K_F\equiv N_F$. 
 
 We can assume that the base change of the previous paragraph is \'etale hence preserving the klt property of $V$. Taking a $\Q$-factorialisation of $V$, we can assume it is still $\Q$-factorial (this replaces  $X_i$ by a small birational modification preserving our assumptions).

\emph{Step 6.}
 Let $N$ on $V$ be the closure of $N_F$. Pick a $\Q$-divisor $0\le E\le N$ on $V$ so that $E_F:=E|_F\neq 0$. Then $-K_F-E_F:=(-K_V-E)|_F$ is pseudo-effective. 
Shrinking $T$, we can assume no $X_i$ is contained in the support of $E$. 
In particular, $E_i:=E|_{X_i}\ge 0$ for each $i$ and the coefficients of $E_i$ 
are $\ge \alpha>0$ where $\alpha$ is independent of $i$.
 
Replacing $n$, we can assume $nE$ is Cartier. Then, since $-K_F-E_F$ is pseudo-effective, 
we can choose $l$ such that 
$$
h^0(mn(-K_F-E_F)+lA_F)>0
$$
for infinitely many $m$. Thus by upper semi-continuity of cohomology, for each $i$, we get 
$$
h^0(mn(-K_{X_i}-E_i)+lA_i)>0
$$
for the same set of $m$. This implies that $-K_{X_i}-E_i$ is pseudo-effective.

\emph{Step 7.} 
Since $\kappa_\sigma(-K_{X_i})=0$ and since $-K_{X_i}\sim_\R B_i\ge 0$, we have 
$N_\sigma(-K_{X_i})=B_i$.
Moreover, since $-K_{X_i}-E_i$ is pseudo-effective and $E_i\ge 0$, $\kappa_\sigma(-K_{X_i}-E_i)=0$. 
Thus 
$$
-K_{X_i}-E_i\equiv N_\sigma(-K_{X_i}-E_i)\ge 0
$$ 
by arguments similar to the steps above, which in turn implies   
$$
E_i+N_\sigma(-K_{X_i}-E_i)\equiv -K_{X_i} \equiv B_i.
$$ 
Therefore, since $\kappa_\sigma(-K_{X_i})=0$, we get $E_i+N_\sigma(-K_{X_i}-E_i)= B_i$,  hence some 
coefficient of $B_i$ is $\ge \alpha$, a contradiction.
\end{proof}

\subsection{Proof of Theorems \ref{t-mc-pro-conj} and \ref{t-bnd-rc-gen-pairs}}

\begin{lem}\label{l-bnd-fam-bnd-mmodel}
Let $\epsilon\in \R^{>0}$ and $\Phi \subset [0,1]$ be a finite set of rational numbers. 
Assume that $\mathcal{P}$ is a bounded set of couples. Consider pairs $(X,B)$ where   
\begin{itemize}
\item $(X,B)$ is projective $\epsilon$-lc,
\item $(X,\Supp B)\in \mathcal{P}$, 
\item the coefficients of $B$ belong to $\Phi$, and 
\item $K_X+B$ is pseudo-effective with $\kappa_\sigma(K_X+B)=0$.
\end{itemize} 
Then there exists a bounded set of minimal models for such $(X,B)$.
\end{lem}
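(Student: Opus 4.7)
The strategy is to encode the collection of pairs $(X,B)$ into finitely many bounded log families, run a relative MMP over each family, and use the hypothesis $\kappa_\sigma(K_X+B) = 0$ to control the output.

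First I would set up a common parameter space. Since $\mathcal{P}$ is bounded, there is a projective morphism $W \to T$ from a variety of finite type together with a reduced divisor $D_W$ on $W$, such that every couple $(X, \Supp B)$ in our collection occurs as a fibre $(W_t, D_{W,t})$. Because $\Phi$ is finite, only finitely many ways of assigning coefficients from $\Phi$ to the components of $D_{W,t}$ are possible. Stratifying $T$ by these assignments and by irreducible components, I reduce to finitely many families $(W, B_W) \to T$ where $B_W$ is a $\Q$-divisor with coefficients in $\Phi$ supported on (a reduction of) $D_W$, and such that each pair $(X, B)$ in our set arises as $(W_t, B_{W,t})$ for some closed $t \in T$. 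It suffices to treat one such family.

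Next I would take a log resolution of $(W, B_W)$ relative to $T$ and, by Noetherian induction on $\dim T$, shrink $T$ so that the resolution restricts to a log resolution of each fibre and all strata are smooth over $T$. On each closed fibre the pair $(X,B)$ is $\epsilon$-lc with $\kappa_\sigma(K_X+B)=0$, so by Nakayama's theorem [\ref{Nakayama}, Corollary 6.1.13] we have $K_X + B \equiv N_\sigma(K_X+B)$, and any minimal model $(X', B')$ satisfies $K_{X'}+B' \equiv 0$ (being nef with $\kappa_\sigma=0$). To produce these minimal models uniformly, I would run a relative $(K_W+B_W)$-MMP over $T$, stratifying $T$ further so that the sequence of divisorial contractions and flips becomes uniform across the fibres of each stratum. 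The output is a relative minimal model $W' \to T$, possibly over an open subset of $T$, bounded by construction, whose fibres realise minimal models of the corresponding $(X,B)$. Noetherian induction on the complement then completes the reduction to finitely many bounded families.

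The main obstacle is making the relative MMP uniform across the family: a priori, different fibres could call for different sequences of flips and divisorial contractions, and termination must be controlled. The key enabling feature is the hypothesis $\kappa_\sigma(K_X + B) = 0$, which via Nakayama reduces the $(K+B)$-MMP essentially to an MMP contracting the effective divisor $N_\sigma(K_W + B_W/T)$; since this negative part is determined by an effective cycle varying in a flat family, its support specialises well under variation of $t \in T$. Combined with boundedness of the length of extremal rays [\ref{kawamata-bnd-ext-ray}] and the existence of minimal models from [\ref{BCHM}] (applied after a small ample perturbation to make $B_W$ big and then comparing models), finitely many stratifications of $T$ suffice to carry the MMP out uniformly, yielding the desired bounded family of minimal models.
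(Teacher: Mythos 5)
Your overall architecture — reduce to finitely many log smooth families over a base $T$, pass the hypotheses $\kappa_\sigma = 0$ and pseudo-effectivity to the (geometric) generic fibre via semi-continuity, then produce a relative minimal model — is the same strategy as the paper. But there is a genuine gap in the step where you actually produce the relative minimal model.

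You propose to get existence and termination of the relative $(K_W+B_W)$-MMP from [\ref{BCHM}] ``applied after a small ample perturbation to make $B_W$ big and then comparing models,'' together with the claim that the MMP ``essentially contracts $N_\sigma$, whose support specialises well in a flat family.'' Neither part holds up as stated. Perturbing $B_W$ by an ample divisor produces a minimal model for a \emph{different} pair, and without further argument there is no reason the $(K_W+B_W+\text{ample})$-MMP is also a $(K_W+B_W)$-MMP; the standard scaling trick that makes this work requires knowing in advance that the unperturbed pair has a (good) minimal model, which is exactly what is to be proved. Moreover, the support of $N_\sigma$ can jump under specialisation, so ``specialises well'' is not automatic. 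The correct input here is not [\ref{BCHM}] but Gongyo's theorem on existence of good minimal models for klt/dlt pairs of numerical log Kodaira dimension zero, which is what the paper cites at exactly this point: once one knows $\kappa_\sigma(K_F+C_F)=0$ for the generic fibre $F$ (over $\overline{k(T)}$), Gongyo gives a good minimal model of the generic fibre, and then one spreads this out to a relative minimal model over a dense open of $T$ after a finite base change, finishing by Noetherian induction. Without invoking a result of that strength (or reproving it), the relative MMP step in your proposal does not go through.
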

\begin{proof}
Taking a bounded log resolution, we can assume that $(X,B)$ is log smooth: more precisely, we can take a log resolution $\phi\colon W\to X$ so that letting $B_W=B^\sim+(1-e)E$, $(W,\Supp B_W)$ is bounded, where $B^\sim$ is the birational transform of $B$, $E$ is the reduced exceptional divisor of $\phi$, and $e\in (0,\epsilon)$ is a fixed rational number. 
Any minimal model $(Y,B_Y)$ of $(W,B_W)$ is also a minimal model of $(X,B)$ which follows from the arguments in [\ref{B-lc-flips}, Remark 2.8]; but since the definition of minimal models is slightly different in this paper, we present a sketch here;  taking a common resolution $\alpha\colon V\to X$ and $\beta \colon V\to Y$ and applying the negativity lemma shows that 
$$
\alpha^*(K_X+B)-\beta^*(K_Y+B_Y)
$$ 
is effective; in particular, any prime divisor $D$ contracted by $Y\bir X$ satisfies 
$$
\epsilon\le a(D,X,B)\le a(D,Y,B_Y)=e
$$
which is not possible, hence $Y\bir X$ does not contract divisors; also, any prime divisor $G$ contracted by $X\bir Y$ satisfies 
$$
a(G,X,B)=a(G,W,B_W)<a(G,Y,B_Y)
$$
which implies that $(Y,B_Y)$ is a minimal model of $(X,B)$.  
Now $(W,B_W)$ is log smooth, $e$-lc, and the coefficients of $B_W$ belong to $\Phi\cup \{1-e\}$. Therefore, we may replace $\epsilon,\Phi, (X,B)$ with $e$, $\Phi\cup \{1-e\}$, $(W,B_W)$, respectively, and replace $\mathcal{P}$ accordingly.

Shrinking $\mathcal{P}$, we can assume that there is a relatively log smooth family $(V,C)\to T$ so that each $(X,B)$ is isomorphic to the log fibre $(V_t,C_t)$ of $(V,C)\to T$ over a closed point $t\in T$ and such that the set of such $t$ are dense in $T$. It is enough to show that $(V,C)$ has a minimal model over $T$, possibly after a finite base change and shrinking $T$. 

Since $K_{V_t}+C_t$ is pseudo-effective for a dense set of points $t$, $K_F+C_F$ is pseudo-effective for the generic fibre $F$ (otherwise we get a Mori fibre space for $(V,C)$ over $T$). Moreover, since $\kappa_\sigma(K_{V_t}+C_t)=0$ for some $t$, $\kappa_\sigma(K_F+C_F)=0$ by arguing similar to the proof of Proposition \ref{p-bnd-coeff-bnd-fam-kappa=0} using upper semi-continuity of cohomology. 

Let $K=k(T)$ and let $\overline{K}$ be its algebraic closure. Let $\overline{F}$ be the base change of $F$ to $\overline{K}$. Then  
$
\kappa_\sigma(K_{\overline{F}}+C_{\overline{F}})=0,
$
so applying [\ref{Gongyo}], $(\overline{F},C_{\overline{F}})$ has a good minimal model $(\overline{F}',C_{\overline{F}'})$ satisfying $K_{\overline{F}'}+C_{\overline{F}'}\sim_\Q 0$. But then this good minimal model can be defined over a finite extension of $K$. Therefore, taking the corresponding finite base change and replacing  $T$, we can assume that $(F,C_F)$ has a good minimal model, say $(F',C_{F'})$, which satisfies $K_{F'}+C_{F'}\sim_\Q 0$. Taking a closure of $(F',C_{F'})$ over $T$, and shrinking $T$, gives a good minimal model $(V',C')$ of $(V,C)$ over $T$ with $K_{V'}+C'\sim_\Q 0/T$. 
\end{proof}

\begin{proof}[Proof of Theorem \ref{t-bnd-rc-gen-pairs}]
We can assume that $X$ is $\Q$-factorial.
If $K_X\not\equiv 0$, then the theorem follows from the arguments of [\ref{BDS}](also see [\ref{CDCH+}]) and the results of [\ref{B-FT-fib}] together with Theorem \ref{t-mc-sh-conj} by reducing the theorem to lower dimension. We give a sketch for convenience. Run an MMP on $K_X$ which ends with a Fano fibration 
$f''\colon X''\to Z''$. If $\dim Z''=0$, then $X''$ is an $\epsilon$-lc Fano variety, so we apply [\ref{B-BAB}] to deduce that $X''$ is bounded. Assume $\dim Z''>0$. By generalised adjunction (\ref{s-adjunction}), we can write 
$$
K_{X''}+B''+M''\sim_\R f''^*(K_{Z''}+B_{Z''}+M_{Z''})
$$
where we consider $({Z''},B_{Z''}+M_{Z''})$ as a projective generalised pair. 
By Theorem \ref{t-sh-sing-gen-fib}, this pair is generalised $\delta$-lc for some 
$\delta>0$ depending only on $d,\epsilon$. Moreover, $Z''$ is rationally connected.

By induction, $Z''$ is bounded up to isomorphism in codimension one, say it is isomorphic 
in codimension one to $Z'''$ which belongs to some bounded family. 
One then shows that there exists a diagram 
$$
\xymatrix{
X''\ar[d] \ar@{-->}[r]& X'''\ar[d]\\
Z'' \ar@{-->}[r] & Z'''
}
$$
where the horizontal maps are isomorphisms in codimension one and 
$X'''\to Z'''$ is a Fano type contraction. Replacing $X'',Z''$ with $X''',Z'''$ we can assume 
that $Z''$ belongs to a bounded family.  
Now by [\ref{B-FT-fib}, Theorem 2.3], $X''$ belongs to a bounded family. 

Thus we have shown that in both cases $\dim Z''=0$ and $\dim Z''>0$, $X''$ belongs to a bounded family. On the other hand, 
extracting the exceptional prime divisors of $X\bir X''$, we can construct a birational 
contraction $\overline{X}\to X''$ so that the induced map $X\bir \overline{X}$ is 
an isomorphism in codimension one. Applying [\ref{B-FT-fib}, Theorem 2.3] once more, this time to $\overline{X}\to X''$, we 
deduce that $\overline{X}$ belongs to a bounded family.

We can then assume $K_X\equiv 0$. Since $X$ is rationally connected, the singularities of $X$ are worse than canonical, that is, there is a prime divisor $D$ over $X$ such that $a(D,X,0)<1$: if not, then taking a resolution $\phi\colon W\to X$, we have $K_W=\phi^*K_X+E$ for some $E\ge 0$; but then $K_W\sim_\Q E\ge 0$ which contradicts the fact that $W$ is rationally connected [\ref{Debarre}, Corollary 4.18(a)].

Let $X'\to X$ be the birational 
contraction which extracts $D$ and let $K_{X'}+B'$ be the pullback of $K_X$. Then $B'\neq 0$.
By the above arguments, $X'$ is isomorphic in codimension one to a 
normal projective variety $X''$ which belongs to a bounded family $\mathcal{P}$.
By [\ref{B-FT-fib}, theorem 1.2], we can assume that $X''$ is $\Q$-factorial. Note that 
$$
\kappa_\sigma(-K_{X''})=\kappa_\sigma(B'')=0
$$
because $B''$ is exceptional over $X$.

By Proposition \ref{p-bnd-coeff-bnd-fam-kappa=0}, 
the coefficient of $B''$ is bounded from below by some fixed $\alpha>0$. In particular, 
$(X'',\Supp B'')$ belongs to a bounded family. Computing intersection 
numbers on $X''$ shows that the coefficient of $B''$ actually belongs to some fixed finite set. 

There is a fixed rational number $t>0$ such that $(X'',B''+t B'')$ is $\frac{\epsilon}{2}$-lc (cf. [\ref{B-BAB}, Theorem 1.8]).
Therefore,  there is a minimal model $\tilde{X}$ of $(X'',B''+t B'')$ 
which belongs to a bounded family, by Lemma \ref{l-bnd-fam-bnd-mmodel}. However, since $K_{X''}+B''\equiv 0$ and $B''$ is exceptional over $X$, 
$$
N_\sigma(K_{X''}+B''+t B'')=t B'',
$$ 
so the divisor contracted 
by the map $X''\bir \tilde{X}$ is $\Supp B''$. In other words, $X$ is isomorphic in 
codimension one with $\tilde{X}$.
\end{proof}

\begin{proof}[Proof of Theorem \ref{t-mc-pro-conj}]
This follows from Theorem \ref{t-bnd-rc-gen-pairs} by putting $M:=-(K_X+B)$ and considering $(X,B+M)$ as a generalised pair.

\end{proof}

%%%%%%%%%

\subsection{Proof of Corollary \ref{t-cy-index-conj}}

\begin{proof}[Proof of Corollary \ref{t-cy-index-conj}]
From the global ACC result [\ref{HMX2}, Theorem 1.5] it can be deduced that the coefficients of $B$ belong to a fixed finite subset of $\Phi$, and that there exists a fixed $\epsilon>0$ such that $(X,B)$ is $\epsilon$-lc (for existence of $\epsilon$, see [\ref{B-Fano}, Lemma 2.48]; note that the lemma assumes $(X,0)$ to be klt but this is unnecessary as we can replace $(X,B)$ in the lemma with a $\Q$-factorial dlt model).

Now applying Theorem \ref{t-mc-pro-conj}, $X$ is bounded up to isomorphism in codimension one, that is, there exists a bounded projective variety $X'$ so that $X$ is isomorphic to $X'$ in codimension one. We may replace $X'$ with its normalisation, hence assume it is normal. If $B'$ is the birational transform of $B$, then $(X',B')$ is klt Calabi-Yau. Thus replacing $(X,B)$ with $(X',B')$, we can assume that $X$ belongs to a bounded family.

We can find a very ample divisor $A$ on $X$ so that $A^d$ and $-K_X\cdot A^{d-1}=B\cdot A^{d-1}$ are bounded. Thus $(X,B)$ belongs to a bounded family of pairs. Applying [\ref{B-FT-fib}, Lemma 3.16], $I(K_X+B)$ is Cartier for some fixed natural number $I$. But then $I(K_X+B)\sim 0$ because $I(K_X+B)\sim_\Q 0$ and because $X$ is rationally connected.
\end{proof}

%%%%%%%%%%%%%%%%%%%%%%%

\section{\bf Singularities on log Calabi-Yau fibrations}

In this section we prove Theorem \ref{t-cb-sing-usual-fib}. We 
actually prove a generalised version of this. 

\begin{thm}\label{t-cb-sing-gen-fib}
Let $d,v\in \N$ and $\epsilon\in \R^{>0}$. 
 Then there is a positive real number $\delta\in \R^{>0}$ depending only on $d,v,\epsilon$ satisfying the following. 
 Assume that $(X,B+M)$ is a generalised pair with data $X'\to X\to S$ and $M'$ such that
\begin{itemize}
\item $(X,B+M)$ is generalised $\epsilon$-lc,

\item we have a contraction $f\colon X\to Z/S$ with $\dim X-\dim Z\le d$,

\item $K_X+B+M\sim_\R 0/Z$, and  

\item there is an integral divisor $N$ on $X$ which is big over $Z$ with $\vol(N|_F)<v$  
for the general fibres $F$ of $f$.
\end{itemize}
Then the generalised pair $(Z,B_Z+M_Z)$ over $S$ given by generalised adjunction 
$$
K_X+B+M\sim_\R f^*(K_Z+B_Z+M_Z),
$$
is generalised $\delta$-lc.
\end{thm}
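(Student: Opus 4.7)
The plan is to reduce Theorem \ref{t-cb-sing-gen-fib} to its usual-pair counterpart Theorem \ref{t-cb-sing-usual-fib} by a perturbation argument in direct parallel with the reduction of Theorem \ref{t-sh-sing-gen-fib} to Theorem \ref{t-mc-sh-conj}. First, a standard hyperplane-section argument on birational models of $Z$ (cf.\ [\ref{B-sing-fano-fib}, Lemma 3.2]) reduces us to the case $\dim Z = 1$. After possibly passing to a $\Q$-factorial dlt modification of $(X,B+M)$, it then suffices to show that the generalized lc threshold $t_z$ of $f^{*}z$ with respect to $(X,B+M)$ is bounded below uniformly in the closed point $z\in Z$ by some $\delta>0$ depending only on $d,v,\epsilon$.

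The core step is to absorb the nef part $M'$ into an effective boundary. Fix a log resolution $\phi\colon X'\to X$ on which $M'$ is nef over $Z$. For $s\in(0,1)$ close to $1$ we have $K_{X'}+B_s'+sM' = \phi^{*}(K_X+sB+sM)$ with $B_s'$ arbitrarily close to $B'$. The goal is to produce a $\Q$-divisor $\Delta'\sim_{\Q} B'+M'$ over $Z$ with $\Delta'\ge B_s'$, such that $(X',\Delta')$ is sub-$\tfrac{\epsilon}{2}$-lc and its pushdown $\Delta$ on $X$ is effective. In the Fano setting of Theorem \ref{t-sh-sing-gen-fib} this is done by picking a general effective $L\sim_{\R}(1-s)(B+M)/Z$, available because $B+M$ can be made ample over $Z$ by MMP using the Fano hypothesis $-K_X$ big$/Z$. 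In the present log Calabi--Yau setting no such positivity of $B+M/Z$ is available, and we instead exploit the auxiliary divisor $N$, which is big over $Z$ with bounded fibre volume, together with a BAB-type boundedness statement for generalized $\epsilon$-lc log Calabi--Yau fibres admitting a polarisation of bounded volume, to construct an effective $\R$-linearly equivalent replacement playing the role of $L$.

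Once the usual pair $(X,\Delta)$ has been obtained with $K_X+\Delta\sim_{\Q} 0/Z$ and $(X,\Delta)$ $\tfrac{\epsilon}{2}$-lc, the same divisor $N$ still satisfies $0<\vol(N|_F)<v$ on general fibres, so Theorem \ref{t-cb-sing-usual-fib} yields a uniform lower bound $\delta_0>0$ for the usual lc threshold of $f^{*}z$ with respect to $(X,\Delta)$. The generalized lc threshold for $(X,B+M)$ differs from this usual one only by an error controlled by $1-s$ and the small corrections coming from $B_s'$; letting $s\to 1$ extracts a uniform positive bound $\delta$ and shows that $(Z,B_Z+M_Z)$ is generalized $\delta$-lc.

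The main obstacle is precisely the generalized-to-usual reduction. In the Fano case of \ref{t-sh-sing-gen-fib} bigness of $B+M/Z$ produces the required effective $L$ essentially for free via Kodaira's lemma, whereas here one must substitute this positivity by that of $N$ in a way that is simultaneously compatible with the $\Q$-triviality $K_X+\Delta\sim_{\Q} 0/Z$ and the $\tfrac{\epsilon}{2}$-lc singularity condition. Carrying out this substitution---which amounts to a genuinely relative boundedness/positivity input specific to the log Calabi--Yau setting---is the technical heart of the argument, and what makes Theorem \ref{t-cb-sing-gen-fib} strictly stronger than its Fano-type analogue Theorem \ref{t-sh-sing-gen-fib}.
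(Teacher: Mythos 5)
Your proposal has two serious problems, one logical and one technical.

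The logical one is circularity. In this paper Theorem \ref{t-cb-sing-usual-fib} is obtained as a special case of Theorem \ref{t-cb-sing-gen-fib} (take $M'=0$); there is no independent proof of \ref{t-cb-sing-usual-fib} supplied. So the plan ``reduce \ref{t-cb-sing-gen-fib} to \ref{t-cb-sing-usual-fib}'' is not available unless you first prove \ref{t-cb-sing-usual-fib} from scratch, which you do not do.

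The technical one is that the perturbation argument you are transplanting from the proof of Theorem \ref{t-sh-sing-gen-fib} genuinely uses the Fano hypothesis. There, after an MMP one may assume $B+M$ is \emph{ample} over $Z$, which is what makes it possible to choose a general effective $L\sim_\R (1-s)(B+M)/Z$ and so produce an effective $\Delta'\sim_\R B'+M'/Z$ absorbing the nef part. In the log Calabi--Yau setting $B+M$ is not big over $Z$; indeed, over $\eta_Z$ one only knows $B+M\sim_\Q -K_X$, which need not be big. The auxiliary divisor $N$, though big over $Z$ with bounded fibre volume, is not $\R$-linearly related to $B+M$ over $Z$, so there is no mechanism by which $N$ could produce the effective representative of $(B'+M')|_F$ that your construction requires. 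You explicitly identify this step as ``the technical heart'' and then only gesture at a BAB-type boundedness input; that gesture is exactly where the argument fails, and it cannot be repaired along these lines because the needed positivity of $B+M/Z$ simply is not there.

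The paper's proof takes a different route and never reduces to usual pairs. It first disposes of the degenerate case $B+M\equiv 0$ over $\eta_Z$ (where one shows $M'\sim_\Q 0/Z$, so one can take $M'=0$ and cite [\ref{B-lcyf}, Theorem 1.9]). In the remaining case it uses $N$, not to perturb $B+M$, but to produce a threshold $t$ with $K_X+tN$ on the pseudo-effective boundary over $Z$; on a minimal model this gives a non-birational contraction $f'\colon X'\to Y'/Z$ with $-K_{X'}$ big over $Y'$. Theorem \ref{t-sh-sing-gen-fib} applied to this Fano type fibration yields a generalised $\tau$-lc structure $(Y',C_{Y'}+R_{Y'})$ over $Z$, and after bounding an integral ample$/Z$ divisor $J'$ on $Y'$ in terms of $d,v,\epsilon$ the result follows by induction on relative dimension, with \ref{t-sh-sing-gen-fib} as the inductive engine. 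Nothing about absorbing $M'$ into a boundary is involved.
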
 
\begin{proof}
Here we view $(Z,B_Z+M_Z)$ as a generalised pair with data $Z'\to Z\to S$ and $M_{Z'}$ where $Z'\to Z$ is a high resolution and $M_{Z'}$ is the moduli part of adjunction on $Z'$. However, since the conclusion is local over $Z$, we may replace $S$ and assume $Z\to S$ is the identity morphism.

\emph{Step 1.}
Taking a $\Q$-factorialisation, we can assume that $X$ is $\Q$-factorial. Similar to Theorem \ref{t-sh-sing-gen-fib}, we can reduce the theorem to the case $\dim Z=1$. Replacing $X$ with the minimal model of $N$ over $Z$, we can assume that $N$ is nef and big over $Z$. The minimal model exists by [\ref{BZh}, Lemma 4.4] as $N$ is big over $Z$. Moreover, 
applying [\ref{B-geom-pol-var}, Theorem 1.1] to the geometric generic fibre of $f$ (or applying to the general fibres and use base change of cohomology) and replacing $N$ with a bounded multiple, we can assume that $N\ge 0$. 

First, assume that $B+M\equiv 0$ over the generic point $\eta_Z$. Then $K_X\equiv 0$ over $\eta_Z$, hence $K_X\sim_\Q 0$ over $\eta_Z$, so $B+M\sim_\R 0$ over $\eta_Z$. Thus $B$ is vertical over $Z$ and the nef part $M'\sim_\R 0$ over $\eta_Z$. But then since $Z$ is a curve, $M'\sim_\R 0/Z$: indeed, $M'\sim_\R L'/Z$ where $L'\le 0$ is vertical over $Z$ and its support does not contain some component of each fibre of $X'\to Z$; this is possible only if $L'=0$ because $L'$ is nef over $Z$. This implies that $M'$ does not contribute to the singularities of $(Z,B_Z+M_Z)$, so we can assume $M'=0$. 
Now apply [\ref{B-lcyf}, Theorem 1.9].

\emph{Step 2.}
So from now on we assume $B+M\not\equiv 0$ over $\eta_Z$. In particular, $K_X$ is not pseudo-effective over $Z$.
By the previous step, $N$ is nef and big over $Z$. Let $t$ be the smallest number such that $K_X+tN$ is pseudo-effective over $Z$. By [\ref{B-geom-pol-var}, Lemma 4.11], $t$ is bounded from above. Moreover, $t$ is a rational number because $(X,tN)$, considered as a generalised pair with nef part $tN$, has a good minimal model over $Z$ [\ref{BZh}, Lemma 4.4] which is not of general type, so considering intersection numbers shows $t$ is rational.

We reduce the theorem to the case when $t\ge 1$. Let $l\in \Z^{\ge 0}$ be the largest number such that $L:=lK_X+N$ is big over $Z$. Note that 
$$
\vol(lK_F+N|_F)=\vol(-lB|_F-lM|_F+N|_F)\le \vol(N|_F)<v
$$ 
where $F$ is a general fibre of $X\to Z$.
Let $X''$ be the minimal model of $L$ over $Z$. If $X\bir X''$ contracts a divisor, then we replace $(X,B+M),N$ with $(X'',B''+M''),L''$ and repeat the process. After finitely many times, we can assume that $X\bir X''$ does not contract any divisor. Note that it is possible that after this process we get $B+M\equiv 0$ over $\eta_Z$ in which case we apply Step 1. So we can assume that we still have $B+M\not\equiv 0$ over $\eta_Z$.

Since $K_X+L$ is not big over $Z$, $K_{X''}+L''$ is also not big over $Z$ where we use the fact that $X\bir X''$ is an isomorphism in codimension one. Assume $s\in \R^{> 0}$ is the smallest number such that 
$K_{X''}+sL''$ is pseudo-effective over $Z$. Then
$$
\frac{1}{s}K_{X''}+L''=\frac{1}{s}K_{X''}+lK_{X''}+N''
$$ 
is pseudo-effective over $Z$. If $\frac{1}{s}+l>1+l$, then since $lK_X+N$ is big over $Z$, $(1+l)K_X+N$ is big over $Z$ which contradicts 
our choice of $l$; again we use the fact that $X\bir X''$ is an isomorphism in codimension one. Thus $\frac{1}{s}+l\le 1+l$, so 
$s\ge 1$. Therefore, replacing $(X,B+M),N$ with $(X'',B''+M''),L''$, we can assume that the threshold $t\ge 1$.
But then the proof of [\ref{B-geom-pol-var}, Lemma 4.11] shows that there are finitely many possibilities for $t$.
 
\emph{Step 3.}
Again view $(X,tN)$ as a generalised pair over $Z$ with nef part $tN$, which is generalised $\epsilon$-lc. Let $X''$ be the minimal model of $K_X+tN$ over $Z$ (to keep the notation simple we are again using $X''$; not to be confused with $X''$ of the previous step). Since $N''$ is big over $Z$, $K_{X''}+tN''$ is semi-ample over $Z$ defining a non-birational contraction $f''\colon X''\to Y''/Z$. Also $(X'',tN'')$ is generalised $\epsilon$-lc. Since $N''$ is big over $Z$, $-K_{X''}$ is big over $Y''$. By \ref{s-adjunction}, we can write a generalised adjunction formula 
$$
K_{X''}+tN''\sim_\Q f''^*(K_{Y''}+C_{Y''}+R_{Y''}),
$$
and by Theorem \ref{t-sh-sing-gen-fib}, $({Y''},C_{Y''}+R_{Y''})$ is generalised $\tau$-lc for some fixed $\tau>0$ depending only on $d,\epsilon$. In particular, multiplicities of the fibres of $f''$ over codimension one points of $Y''$ are bounded. 
 
\emph{Step 4.} 
There is a fixed $p\in \N$ such that $p(K_{X''}+tN'')$ is integral. We claim that, perhaps after replacing $p$ with a bounded multiple, $p(K_{X''}+tN'')\sim 0$ over the generic point of $Y''$.  The general fibres $G''$ of $f''$ are bounded by BAB [\ref{B-BAB}]. Since $K_{G''}+tN''|_{G''}\sim_\Q 0$ and $N''|_{G''}\ge 0$ is integral and $t$ is in a fixed finite set, 
$(G'',\Supp N''|_{G''})$ belongs to a bounded family and the coefficients of $tN''|_{G''}$ are in a fixed finite set. Thus replacing $p$, we can assume that $p(K_{G''}+tN''|_{G''})$ is Cartier, by [\ref{B-Fano}, Lemma 2.24]. But then $p(K_{G''}+tN''|_{G''})\sim 0$ because $G''$ being of Fano type implies $\Pic^0(G'')=0$. This implies the claim.

\emph{Step 5.}
We can write the above adjunction formula so that  
$$
p(K_{X''}+tN'')\sim pf''^*(K_{Y''}+C_{Y''}+R_{Y''}).
$$
Since $p(K_{X''}+tN'')$ is integral and since multiplicities of the fibres of $f''$ over codimension one points are bounded, we can assume that 
$$
J'':=p(K_{Y''}+C_{Y''}+R_{Y''})
$$ 
is integral, perhaps after replacing $p$ with a bounded multiple. 

By construction, $X''\to Y''$ is the contraction defined by the semi-ample$/Z$ divisor $K_{X''}+tN''$, hence $J''$ is ample over $Z$. We show that $\vol_{/Z}(J'')$ is bounded from above. Let $\phi\colon W\to X$ and $\psi\colon W\to X''$ be a common resolution. Pick a general point of $Z$ and let $F_W,F_X,F_{X''},F_{Y''}$ be the corresponding fibres over this point. We want to show that $\vol(J''|_{F_{Y''}})$ is bounded from above.

By [\ref{B-geom-pol-var}, Theorem 1.1], $|mN|_{F_X}|$ defines a birational map for some fixed $m\in \N$. This implies that $|\phi^*mN|_{F_W}|$ defines a birational map. Thus, if $R$ is a general fibre of $F_W\to F_{Y''}$, then $|\phi^*mN|_{R}|$ defines a birational map, hence 
$(\phi^*mN|_{R})^{c-e}\ge 1$ where $c=\dim F_W$ and $e=\dim F_{Y''}$. Therefore, 
\begin{equation}\label{eq-intersection-volume}
(\phi^*mN|_{F_W})^{c-e}\cdot (\psi^*p(K_{X''}+tN'')|_{F_{W}})^e=(\phi^*mN|_{F_W})^{c-e}\cdot (\psi^*f''^*J''|_{F_{W}})^e
\end{equation}
$$
\hspace{1.5cm} =(\phi^*mN|_{F_W})^{c-e}\cdot (\vol(J''|_{F_{Y''}})R)=(\phi^*mN|_{R})^{c-e}\vol(J''|_{F_{Y''}}) \ge \vol(J''|_{F_{Y''}}).
$$
To get the second equality we are using the fact that the zero-cycle $(J''|_{F_{Y''}})^e$ is $\Q$-linearly equivalent to a zero-cycle, say $\sum \lambda_j y_j$, of degree equal to $\sum \lambda_j=\vol(J''|_{F_{Y''}})$ where $y_j$ are general closed points; so to calculate the above intersection number we can  
replace $(\psi^*f''^*J''|_{F_{W}})^e$ with $\sum \lambda_jR_j$ where $R_j$ is the fibre of $F_W\to F_{Y''}$ over $y_j$; in turn we can replace each $R_j$ with the fixed general fibre $R$, so we can replace $\sum \lambda_jR_j$ with $\vol(J''|_{F_{Y''}})R$.  

It is then enough to show that the left hand side intersection number of (\ref{eq-intersection-volume})
is bounded from above. But this intersection number does not exceed the left hand side of the following inequalities: 
$$
\vol(\phi^*mN|_{F_W}+\psi^*p(K_{X''}+tN'')|_{F_{W}})\le \vol(\phi^*(pK_{X}+(m+pt)N)|_{F_{W}})
$$
$$
\le \vol(\phi^*(m+pt)N)|_{F_{W}})=\vol((m+pt)N|_{F_X})<(m+pt)^cv.
$$

\emph{Step 6.}
Now consider the generalised adjunction formula 
$$
K_{X''}+B''+M''\sim_\R f''^*(K_{Y''}+B_{Y''}+M_{Y''}).
$$
Since $-K_{X''}$ is big over $Y''$, applying Theorem \ref{t-sh-sing-gen-fib}, $({Y''},B_{Y''}+M_{Y''})$ is generalised $\zeta$-lc for some fixed $\zeta>0$ depending only on $d,\epsilon$. 
Moreover, by Step 5, $J''$ is integral and ample over $Z$ with bounded relative volume. Therefore, applying induction on the relative dimension we deduce that the discriminant divisor $\tilde{B}_{Z}$ of adjunction for $(Y'',B_{Y''}+M_{Y''})\to Z$
has coefficients $\le 1-\delta$ for some fixed $\delta>0$ (recall that $Z$ is a curve, so we only need to consider the discriminant divisor rather than the discriminant b-divisor). But $\tilde{B}_{Z}$ is also the discriminant divisor of adjunction for $(X'',B''+M'')\to Z$ (see the arguments of the proof of [\ref{B-lcyf}, Lemma 6.10]). 
Moreover, the discriminant divisor of adjunction for $(X'',B''+M'')\to Z$ coincides 
with the discriminant divisor ${B}_Z$ of adjunction for $(X,B+M)\to Z$: indeed, since 
$$
K_X+B+M\sim_\R 0/Z,
$$
on the common resolution $W$ of the previous step, we have 
$$
\phi^*(K_X+B+M)=\psi^*(K_{X''}+B''+M'')
$$ 
and the two generalised pairs $(X,B+M)$ and $(X'',B''+M'')$ have the same nef part, hence  
for any closed point $z\in Z$  and number $u$, $(X,B+uf^*z+M)$ is generalised lc iff 
$(X'',B''+ug''^*z+M'')$ is generalised lc where $g''$ denotes $X''\to Z$.  
Therefore, the coefficients of $B_Z$ do not exceed $1-\delta$, so $(Z,B_Z+M_Z)$ is generalised $\delta$-lc.
\end{proof}

\begin{proof}[Proof of Theorem \ref{t-cb-sing-usual-fib}]
This is a special case of Theorem \ref{t-cb-sing-gen-fib}.

\end{proof}

%%%%%%%%%%%%%%%%%%%%%%%%%%%%%%%%%%%%%
%%%%%%%%%%%%%%%%%%%%%%%%%%%%%%%%%%%%%

\vspace{2cm}

\small
\textsc{Yau Mathematical Sciences Center, JingZhai, Tsinghua University, Hai Dian District, Beijing, China 100084  } \endgraf
\vspace{0.5cm}
\email{Email: birkar@tsinghua.edu.cn\\}

\pagebreak

\end{document}